\documentclass[10pt,reqno]{amsart}

\usepackage[utf8]{inputenc} 

\usepackage[margin=1in]{geometry} 

\usepackage{graphicx} 
\usepackage{float} 

\usepackage{changepage}

 \usepackage[parfill]{parskip} 
 
\usepackage{booktabs} 
\usepackage{array} 
\usepackage{paralist} 
\usepackage{verbatim} 
\usepackage{subfig} 
\usepackage{mathrsfs}
\usepackage{amssymb}
\usepackage{xcolor}
\usepackage{amsthm}
\usepackage{amsmath,amsfonts,amssymb,esint,hyperref}
\usepackage[noabbrev, capitalize]{cleveref}

\usepackage{autonum}

\usepackage{graphics,color}
\usepackage{enumerate, enumitem}
\usepackage{mathtools,centernot}
\usepackage{cases}
\usepackage{amsrefs}
\usepackage{bbm}
\usepackage{xfrac}

\usepackage{bookmark}

\newtheorem{theorem}{Theorem}[section]
\newtheorem{lemma}[theorem]{Lemma}

\newtheorem{corollary}[theorem]{Corollary}
\newtheorem{definition}[theorem]{Definition}
\newtheorem{proposition}[theorem]{Proposition}
\newtheorem{remark}[theorem]{Remark}

\numberwithin{equation}{section} 

\newcommand{\norm}[1]{\left\|#1\right\|}
\newcommand{\abs}[1]{\left|#1\right|}

\newcommand*{\R}{\ensuremath{\mathbb{R}}}

\renewcommand*{\S}{\ensuremath{\mathcal{S}}}

\newcommand*{\N}{\ensuremath{\mathbb{N}}}

\newcommand*{\tr}{\ensuremath{\mathrm{tr\,}}}

\renewcommand{\P}{\mathcal{P}}
\renewcommand{\S}{\mathcal{S}}
\newcommand{\J}{\mathcal{J}}

\newcommand{\quotes}[1]{``#1''}
\renewcommand{\phi}{\varphi}

\renewcommand{\MR}[1]{} 

\usepackage{color, graphicx}
\usepackage{mathrsfs, dsfont}

\usepackage[]{hyperref}
\hypersetup{
    colorlinks=true,       
    linkcolor=red,          
    citecolor=blue,        
    filecolor=red,      
    urlcolor=cyan           
}

\def\div{\mathop{\rm div}\nolimits}    
 
\def\tr{\mathop{\rm Tr}\nolimits}

\newcommand{\be}{\begin{equation}}
\newcommand{\ee}{\end{equation}}
\newcommand{\D}{\mathrm{D}}

\title{Dissipative structures in compressible inviscid fluids}

\author[M. Inversi]{Marco Inversi}
\address[M. Inversi]{Max Planck Institute for Mathematics in the Sciences, Inselstrasse 22, 04103, Leipzig, Germany.}
\email{marco.inversi@mis.mpg.de}

\date{\today}

\subjclass[2020]{35Q35, 76N10, 35Q31, 35D30, 26A45}
\keywords{Isentropic compressible Euler, inhomogeneous incompressible Euler, shocks, energy dissipation}

\begin{document}

\begin{abstract}


This note aims at the following problem. In an ideal density dependent fluid system, is the total energy dissipated on shock type discontinuities? To this end, we study the local energy balance for weak solutions to the isentropic compressible Euler system and derive fine properties of the defect measure. This is done by a careful analysis of the small scale properties of the solutions at the shock discontinuity. By means of the same technique, we also consider the inhomogeneous incompressible case, and, comparing the result, we confirm the general principle that the accumulation of the total energy on codimension one singular structures is a feature that distinguishes compressible and incompressible models. 
\end{abstract}

\maketitle

\section{Introduction}

We consider weak solutions to the isentropic compressible Euler system
\begin{equation} \label{compr Euler} \tag{E} 
\begin{cases}
    \partial_t(\rho u) + \div(\rho u \otimes u) + \nabla p(\rho) = \rho f, 
    \\ \partial_t \rho + \div(\rho u) =0.
\end{cases}
\end{equation}
The latter describes the conservation of momentum and mass in a compressible ideal fluid system under the action of an external force, where the pressure is assumed to be a function of the density. The equations are settled in $(0,T) \times \Omega$, where $\Omega$ is any open set in the Euclidean space. Here, $u$ is the velocity field, $\rho \geq 0$ is the density of the fluid, $f$ is the external force field, and $p: \R \to \R$ is the scalar pressure law. Since we focus on local properties, the initial values and boundary conditions are not specified. We work with weak solutions, which are defined in the usual distributional sense. More precisely, we assume $u \in L^3_{t,x}, \rho \in L^\infty_{t,x}, f \in L^{\sfrac{3}{2}}_{t,x}$\footnote{Any assumption that makes the product $u\cdot f$ well defined would suffice. \label{f cdot u}} and the pressure law $p$ is assumed to be a continuous function such that $p(0)=0$\footnote{Assuming $p(0)=0$ is not restrictive, since $\nabla p(\rho)$ is the quantity involved in the equation. Moreover, with this choice it is readily checked that the internal energy is continuous at $0$.\label{f: continuity of internal energy}}. For such solutions, the local energy balance for \eqref{compr Euler} defines a possibly non-trivial distribution
\begin{equation} \label{eq: defect distribution}
    -\D := \partial_t \left(  \frac{\rho\abs{u}^2}{2} + \P(\rho) \right) + \div \left( u \left( \frac{\rho \abs{u}^2}{2} + p(\rho) + \P(\rho) \right) \right) - \rho f\cdot u,  
\end{equation}
where the internal energy $\P(\rho)$ is given by 
\begin{equation} \label{internal energy}
    \P(\rho) : = \begin{cases} \displaystyle
        \rho \int_1^\rho \frac{p(r)}{r^2} \, dr & \text{ if } \rho >0, 
        \\ 0 & \text{ if } \rho=0. 
    \end{cases}
\end{equation}
This  choice is driven by the equation, as discussed in \cref{S: main results}. For smooth solutions to \eqref{compr Euler}, the distribution $\D$ vanishes. In the periodic setting, this implies that the variation of the total energy is compensated by the work done by the external force. The same result holds with suitable decay properties if $\Omega= \R^d$, or in domains with non-empty boundary, provided that the flow is tangential to the boundary.

The problem of determining a minimal set of regularity assumptions to show that \eqref{eq: defect distribution} is valid with $\D \equiv 0$ is very interesting and has been considered by several authors \cites{FGSW17, ChenYu19, Wu23}. Unlike the homogeneous incompressible case, where the regularity conditions must be imposed only on the velocity field, here multiple choices are available. Regarding the spatial regularity, it is possible to trade smoothness of the velocity field for that of the density. Another possibility is to allow for vacuum, i.e. the density might vanish or be arbitrarily close to $0$ in some regions. In this case, the mathematical features of the system change. To the best of our knowledge, time commutators appear along the computations, and some regularity with respect to the time variable is needed. More precisely, following the classical proofs for the positive part of the celebrated Onsager conjecture \cite{O49} for the incompressible homogeneous Euler equations \cites{Eyink95, CET94, DR00}, the authors in \cites{FGSW17, ChenYu19, Wu23} show that the total energy is conserved for solutions possessing some fractional regularity (Besov or Sobolev). The latter can be imposed in time, space, or both. 

In the present paper, the point of view is rather different. Instead of giving conditions to prove energy conservation, our goal is to study the accumulation of energy on the set where the velocity or the density has a jump discontinuity. The formation of such \quotes{shock-type singularities} is a typical phenomenon in compressible turbulence. The mathematical literature addressing this problem and its variant is extremely rich and will not be reviewed in this paper. See e.g. \cites{BSV22, BDrSV22, BSV23, BSV23bis,  Daf16, Chr07, Chr19} and the references therein. The geometric setting considered here is that of singularities which happen to be organized on codimension one hypersurfaces with Lipschitz regularity. We will show that the defect distribution, often assumed to be a space-time Radon measure, gives mass to such singular structures. 

The simplest compressible model is that of 1D Burgers solutions (in $L^\infty \cap BV$) with a non-empty jump set, which coincides with the region where the energy dissipation is active. See e.g. \cite{Inv25}*{Remark 4.2}. Physical solutions possess nonnegative dissipation, as they arise as vanishing viscosity limits of viscous Burgers \cite{Daf16}. A direct computation shows that for such a toy problem the dissipation is nonnegative if and only if fluid particles are compressed at the shock discontinuity. See \cref{R: burgers shocks}. Moreover, continuous solutions do not dissipate energy \cite{Daf06} and shocks naturally develop for non constant initial data \cite{Alin95}, \cite{Drivas26}*{Theorem 6.1}. 

For incompressible Euler, the situation is expected to change drastically.  The idea of looking at dissipation on surfaces of codimension one dates back to Shvydkoy \cite{Shv09}. The set of solutions considered by the author covers that of classical vortex sheets. A similar problem has been recently addressed in \cites{DRInvN25, InvVi24} for both the homogeneous and density dependent incompressible setting. In all these cases, it turns out that the dissipation measure does not charge codimension one Lipschitz hypersurfaces where the density and the velocity have traces. The key point is to make use of the incompressibility condition to exploit cancellations.  

A possible functional setting to study the accumulation of energy on jump discontinuities organized on codimension one hypersurfaces is that of solutions with bounded variation or deformation. In the incompressible setting, solutions in $L^\infty_{t,x} \cap L^1_t BV_x$ have recently been shown to conserve the kinetic energy \cite{DRINV23}\footnote{It turns out that $L^\infty_{t,x} \cap L^1_t BV_x$ is currently the only critical space for the Onsager conjecture where where the problem of determining energy rigidity or flexibility for solutions to the incompressible Euler equations is solved.}. Following a striking idea by Ambrosio for the renormalization of the transport equation \cite{Ambr04}, the first proof of this fact relies on the optimization of the convolution kernel needed to regularize the equation and making a non-trivial use of the divergence free condition. See also the discussion in \cite{Inv25}, where different proofs based on blow up analysis have been given. Unlike the optimization procedure on the convolution kernel, it turns out that the technique developed in \cite{Inv25} naturally extends to density dependent settings. Together with \cites{DRInvN25, InvVi24}, this is the starting point of our analysis. 

In \cref{T: dissipation on hypersurfaces}, we compute the dissipation on codimension one Lipschitz hypersurfaces in space-time at which both the density and the velocity have one-sided (space-time) traces. In \cref{R: sign condition} we comment on the case of a nondecreasing convex pressure law. Our result is consistent with the fact that some energy is dissipated (not created) on hypersurfaces where fluid particles enter from both sides. This is related to the validity of the Lax entropy conditions, which serve as a criterion to select physical solutions \cites{Daf16,Lax73}. The study relies on the properties of measure divergence fields possessing strong traces, following \cites{DRInvN25, InvVi24} for the incompressible setting.  Then, we assume no vacuum, i.e. there exists $c>0$ such that
\begin{equation} \label{no vacuum}
    \rho(t,x) \geq c >0 \qquad \text{for a.e. } t,x. 
\end{equation} 
In \cref{t: main DR} we establish an approximation formula for the dissipation in terms of regularized versions of the momentum and the density. This is inspired by the analysis of the Duchon--Robert distribution arising in the homogeneous incompressible Euler system. See \cite{Drivas26} for a survey. Then, we focus on solutions with spatial bounded variation or deformation. No time regularity is assumed. In this case, following the method of \cite{Inv25}, we compute the limit of the approximating sequence for $\D$ given by \cref{t: main DR}. We show that a non-trivial dissipation is linked with the jump discontinuity for the normal component of the velocity, which is possible if and only if the divergence of the velocity field has a $(d-1)$-dimensional part. See \cref{T: main bounded var} for a precise statement. Finally, we collect our results in \cref{T: full statement}. 

In the final section, we comment on the inhomogeneous incompressible setting, where similar questions can be posed. Using the same technique, we establish an approximation formula for the defect distribution. Then, in the bounded variation/deformation setting, we show that the dissipation is trivial. See \cref{T: main IIE} and \cref{T: full statement IIE}. This concludes the analysis started in \cite{InvVi24} and is consistent with the fact that dissipation is controlled by (the lower dimensional part of) the divergence of the velocity field. 

\subsection{Plan of the paper}
The paper is organised as follows. \cref{S: main results} is devoted to present our main results. Far from being exhausting, we discuss some connections with the existing literature and briefly present the ideas of the proofs. In \cref{s: tools} we collect some tools. In \cref{s: proofs} we show the proofs of our main results. In \cref{s: IIE} we move to the inhomogeneous incompressible setting. 


\section{Main results} \label{S: main results}

\subsection{Dissipation on codimension one hypersurfaces}

In the case where the density is allowed to vanish, we compute the dissipation measure on space-time Lipschitz hypersurfaces at which both the velocity and the density possess one-sided traces. See \cref{s: tools} for precise definitions. 

\begin{theorem} \label{T: dissipation on hypersurfaces}
Let $(\rho, u) \in L^\infty_{t,x}$ be a weak solution to \eqref{compr Euler} with pressure law $p \in C(\R)$ such that $p(0)=0$ and external force $f \in L^1_{t,x}$. Suppose that the dissipation $\D$ is a space-time Radon measure. Let $\Sigma \subset \Omega \times (0,T)$ be a $\mathcal{H}^{d}$-countably rectifiable set oriented by a unit normal $n = (n_t, n_x) \in \R \times \R^d$ and assume that both $\rho, u$ possess bilateral traces at $\Sigma$, denoted by $\rho^\pm, u^\pm$. Then we have
\begin{equation} \label{eq: dissipation on hypersufaces}
\D \llcorner \Sigma = - (u^+_n - u^-_n) \, \Pi (\rho^+, \rho^-) \, \mathcal{H}^d \llcorner \Sigma, 
\end{equation}
where we set $u_n^\pm : = u^\pm \cdot n_x$ and 
\begin{equation}
    \Pi(\rho^+, \rho^-) : = \begin{cases}
        \displaystyle \frac{p(\rho^+) + p(\rho^-)}{2} - \frac{\rho^+ \rho^-}{\rho^+ - \rho^-} \int_{\rho^-}^{\rho^+} \frac{p(s)}{s^2} \, ds & \text{ if } \rho^+>0, \rho^->0,  \rho^+ \neq \rho^-, 
        \\ \displaystyle \frac{p(\rho^+) + p(\rho^-)}{2} & \text{ if } \rho^+=0 \text{ or } \rho^- =0, 
        \\ 0 & \text{ if } \rho^+ = \rho^-\geq 0, 
    \end{cases}  \label{eq: function Pi}
\end{equation}
\end{theorem}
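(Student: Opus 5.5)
The plan is to read $\D$ as the space-time divergence of a bounded field and compute its restriction to $\Sigma$ through normal traces. Set
\[
V:=\Big(\tfrac{\rho\abs{u}^2}{2}+\P(\rho),\; u\big(\tfrac{\rho\abs{u}^2}{2}+p(\rho)+\P(\rho)\big)\Big)\in L^\infty_{t,x}.
\]
This uses that $p$ is continuous and that $\P$ is continuous on $[0,\infty)$ (see the footnote on the internal energy). By \eqref{eq: defect distribution},
\[
\div_{t,x}V=-\D+\rho f\cdot u,
\]
which is a Radon measure. Since $\rho f\cdot u\in L^1_{t,x}$ and $\mathcal{H}^d\llcorner\Sigma$ is singular with respect to $\mathcal{L}^{d+1}$, we get $\D\llcorner\Sigma=-(\div_{t,x}V)\llcorner\Sigma$.

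The tool from \cref{s: tools} on measure-divergence fields with strong one-sided traces gives
\[
(\div V)\llcorner\Sigma=\big(V^+\cdot n-V^-\cdot n\big)\,\mathcal{H}^d\llcorner\Sigma.
\]
The one-sided traces of $V$ are the nonlinear expressions evaluated at $\rho^\pm,u^\pm$. This holds because the traces are strong (in $L^1$ on shrinking half-balls) and the maps involved are continuous and bounded on the bounded range of $(\rho,u)$. The same argument applied to the mass and momentum equations, whose right-hand sides $0$ and $\rho f$ do not charge $\Sigma$, yields the Rankine–Hugoniot relations $\mathcal{H}^d$-a.e. on $\Sigma$. Writing $[\cdot]$ for the jump $(\cdot)^+-(\cdot)^-$ and $j:=\rho^\pm(n_t+u_n^\pm)$, these read
\[
[\rho(n_t+u_n)]=0,\qquad j\,[u]+[p(\rho)]\,n_x=0.
\]
In particular $j$ is well defined, being the common value on both sides.

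The rest is pointwise algebra. The normal flux jump is
\[
[V\cdot n]=j\,[\tfrac{\abs{u}^2}{2}]+[\P(\rho)(n_t+u_n)]+[p\,u_n].
\]
For the kinetic part, the momentum relation gives
\[
j[\tfrac{\abs{u}^2}{2}]=\tfrac{1}{2}\,j[u]\cdot(u^++u^-)=-\tfrac{1}{2}[p](u_n^++u_n^-).
\]
Combining with the pressure work term, $[p u_n]-\tfrac12[p](u_n^++u_n^-)=\tfrac{p^++p^-}{2}(u_n^+-u_n^-)$.

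For the internal energy term I split into cases. When $\rho^\pm>0$, we have
\[
[\P(n_t+u_n)]=j\,[\P(\rho)/\rho]=j\int_{\rho^-}^{\rho^+}\frac{p(s)}{s^2}\,ds.
\]
If moreover $\rho^+\ne\rho^-$, then $j/\rho^+-j/\rho^-=u_n^+-u_n^-$ gives
\[
j=-(u_n^+-u_n^-)\,\frac{\rho^+\rho^-}{\rho^+-\rho^-},
\]
which reproduces the first line of \eqref{eq: function Pi}.

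If one density vanishes, then $j=0$. Hence $\rho^\pm(n_t+u_n^\pm)=0$ on the other side as well, and since $\P(\rho)/\rho$ is finite for $\rho>0$ and $\P(0)=0$, the $\P$-term is zero. This leaves exactly $\tfrac{p^++p^-}{2}(u_n^+-u_n^-)$, the second line of \eqref{eq: function Pi}.

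If $\rho^+=\rho^->0$, then $[p]=0$, so $j[u]=0$.
\begin{itemize}
\item If $j\neq0$, then $n_t+u_n^\pm=j/\rho$ coincide, so $u_n^+=u_n^-$.
\item If $j=0$, then $n_t+u_n^\pm=0$, again giving $u_n^+=u_n^-$.
\end{itemize}
Every term then vanishes. If $\rho^\pm=0$, all terms vanish trivially. Substituting into $\D\llcorner\Sigma=-[V\cdot n]\,\mathcal{H}^d\llcorner\Sigma$ gives \eqref{eq: dissipation on hypersufaces}.

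The main obstacle is the first step: rigorously identifying $(\div V)\llcorner\Sigma$ with the jump of normal traces for a merely $L^\infty$ field on a countably rectifiable (not closed) set. This requires localizing to Lipschitz graphs covering $\Sigma$ up to $\mathcal{H}^d$-null sets and using the strong-trace blow-up, as in the incompressible analysis of \cites{DRInvN25, InvVi24}. It also requires checking that the trace of each nonlinear composition is the composition of the traces. I would take this from the Gauss–Green/blow-up lemma in \cref{s: tools}: blow up at $\mathcal{H}^d$-a.e. point of $\Sigma$, where $V$ converges in $L^1_{loc}$ to the piecewise constant field $V^\pm$ and the measure $\div V$ blows up to $[V\cdot n]\,\mathcal{H}^d$ on the tangent plane. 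The orientation convention ($+$ on the side $n$ points to) must be kept consistent throughout for the sign.
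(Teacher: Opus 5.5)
Your proposal is correct and follows essentially the same route as the paper. You write $-\D+\rho f\cdot u$ as the space-time divergence of the bounded field $V$ and restrict to Lipschitz pieces of $\Sigma$ (the $\mathcal{H}^d$-null remainder is not charged since $\abs{\div V}\ll\mathcal{H}^d$). You identify the jump of normal traces with $[V\cdot n]$ evaluated at $\rho^\pm,u^\pm$, and reduce it via the Rankine--Hugoniot relations, dotting the momentum jump with $u^++u^-$, to $(u_n^+-u_n^-)\,\Pi(\rho^+,\rho^-)$. Your bookkeeping is slightly more careful than the paper's: you keep the $L^1$ term $\rho f\cdot u$ (which cannot charge $\Sigma$), and you work out the vacuum and equal-density cases that the paper only sketches.
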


By the mean value theorem and recalling that $p(0)=0$, the function $\Pi$ is well defined and continuous on $[0, +\infty)^2$. The existence of traces on space-time hypersurfaces is guaranteed if we assume $\rho \in BV_{t,x}, (1,u) \in BD_{t,x}$, for instance. The proof of \cref{T: dissipation on hypersurfaces} relies on the theory of measure divergence fields, following \cites{DRInvN25, InvVi24} for the incompressible Euler equations, both in the homogeneous and density dependent setting. In those cases, it is shown that the dissipation does not give mass to Lipschitz hypersurfaces, coherently with the incompressibility condition. In the case of \eqref{compr Euler}, by \eqref{eq: defect distribution} we have  
$$ -\D = \div_{t,x} \mathcal{V}, \qquad \mathcal{V} : =  \left( \frac{\rho \abs{u}^2}{2} + \mathcal{P}(\rho) , \, u \left( \frac{\rho\abs{u}^2}{2} + p(\rho) + \mathcal{P}(\rho) \right) \right) \in L^\infty_{t,x}.  $$ 
Given a space-time Lipschitz hypersurface $\Sigma$ oriented by a unit normal vector $n \in \mathbb{S}^d$, the concentration of $\D$ in $\Sigma$ is computed in terms of the jump of the normal component of $\mathcal{V}$ at $\Sigma$, that is
$$ - \D \llcorner \Sigma = (\mathcal{V}^+ - \mathcal{V}^-) \cdot n \, \mathcal H^d \llcorner \Sigma. $$
Similarly, we set 
$$\mathcal{U} : = \left( \rho u, \, \rho u\otimes u + p(\rho) \, \textrm{Id} \right), \qquad \mathcal{Z} := (\rho, \, \rho u). $$
By the same reasoning, since the external force is in $L^1_{t,x}$, \eqref{compr Euler} gives the following 
\begin{equation}
    (\mathcal{U}^+ - \mathcal{U}^- ) \cdot n =0, \quad (\mathcal{Z}^+ - \mathcal{Z}^-)\cdot n =0 \qquad \text{ at $\mathcal{H}^d$-a.e. point at $\Sigma$}. \label{eq: no jump of U,Z}
\end{equation}
Then, it is a matter of computing $(\mathcal{V}^+ - \mathcal{V}^-)\cdot n$ knowing \eqref{eq: no jump of U,Z}. 

\begin{remark} \label{R: burgers shocks}
The method outlined above can be used to compute the dissipation associated with 1D Burgers solutions in $L^\infty_{t,x}$ with traces on a space-time hypersurface $\Sigma$ oriented by a unit vector $n= (n_t, n_x) \in \R\times \R$. In this case, $\D$ is defined by 
$$-\D : = \partial_t \frac{u^2}{2} + \partial_x \frac{u^3}{3}. $$
It turns out that 
$$\D \llcorner \Sigma = - \frac{1}{12} \left[ u^+ - u^- \right]^3 n_x \, \mathcal{H}^1 \llcorner \Sigma.  $$
Therefore, the entropy condition $\D \geq 0$ is satisfied if and only if the jump has a negative sign, that is the particles are accumulating around $\Sigma$.  
\end{remark}

\begin{remark} \label{R: sign condition}
By \eqref{eq: dissipation on hypersufaces}, it turns out that the dissipation charges only the portion of $\Sigma$ where $(u^+ - u^-)\cdot n_x \neq 0$. If $\div(u)$ is a Radon measure, this is the portion of $\Sigma$ charged by the divergence of $u$. Assume, in addition, that the pressure law is convex and non-decreasing. In this case, we show in \cref{L: equivalent formulas} that $\Pi(\rho^+, \rho^-) \geq 0$. Thus, given a Lipschitz space-time hypersurface $\Sigma$, the measure $\D \llcorner \Sigma$ is nonnegative if and only if $ (u^+ - u^-)\cdot n_x \leq 0$ in $\mathcal{H}^{d}$-a.e. point in $\Sigma$, which is equivalent to requiring $\div(u) \llcorner \Sigma \leq 0$. This is consistent with the fact that the energy is dissipated in $\Sigma$ if the fluid particles are compressed around the shock discontinuity. 
\end{remark}

\subsection{Assuming no vacuum} In the second part of the paper, we assume the \emph{no vacuum} condition \eqref{no vacuum}. To begin, we derive an approximation formula for the defect distribution associated with weak solutions to \eqref{compr Euler}. We introduce the following notation. Fix a radial convolution kernel $\eta \in C^\infty_c(B_1; [0,1])$  with $\int \eta =1$. For $\ell>0$, we define $\eta_\ell(x) := \ell^{-d} \eta \left(\sfrac{x}{\ell}\right)$. Given $g \in L^1_{\rm loc}(\R^d)$ and $\rho \in L^\infty_{\rm loc} (\R^d)$ satisfying \eqref{no vacuum}, we set
\begin{equation} \label{eq: favre mollification}
    \overline{g} := g* \eta_\ell, \qquad \widetilde{g} := \frac{\overline{\rho g}}{\overline{\rho}} = \frac{(\rho g) * \eta_\ell}{\rho * \eta_\ell}.  
\end{equation}
The latter is known as \emph{Favre mollification} of $g$ \cite{Favre65}\footnote{See also \cite{Aluie13} and the discussion in \cite{SchumZin25}, where the authors compare different regularization schemes for \eqref{compr Euler} and the compressible Navier--Stokes system from the numerical point of view.}. In order to keep the notation as simple as possible, we omit the explicit dependence of $\overline{g}, \widetilde{g}$ on $\ell$ (and $\eta$).

\begin{theorem} \label{t: main DR}
Let $(\rho, u)$ be a weak solution to \eqref{compr Euler}, with $p \in C(\R)$ such that $p(0)=0$ and external force $f$. Assume that $u \in L^3_{t,x}$, $\rho \in L^\infty_{t,x}$ satisfies \eqref{no vacuum} and $f \in L^{\sfrac{3}{2}}$\footref{f cdot u}. The distribution $\D$ defined by \eqref{eq: defect distribution} satisfies 
\begin{equation} \label{formula D}
    \D = \lim_{\ell \to 0} \left( J_1^\ell + J_2^\ell \right), 
\end{equation}
where the limit in \eqref{formula D} is taken in the sense of distributions and we set
\begin{align}
    J_1^\ell & := \overline{\rho} \left(  \widetilde{u} \otimes \widetilde{u} - \widetilde{u \otimes u} \right) : E \widetilde{u}, \qquad \text{with } E\widetilde{u} := \frac{\nabla \widetilde{u} + (\nabla \widetilde{u})^T}{2}, \label{eq: J_1}
    \\ J_2^\ell & := \div(\widetilde{u})\left( p(\overline{\rho}) - \overline{p(\rho)} \right). \label{eq: J_2}
\end{align} 
\end{theorem}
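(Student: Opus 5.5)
We mollify the equations \eqref{compr Euler} in space only, with the kernel $\eta_\ell$, and write the regularized system in Favre variables. Setting $\overline{\rho u} = \overline{\rho}\,\widetilde{u}$ and $\overline{\rho u\otimes u} = \overline{\rho}\,\widetilde{u\otimes u}$, the mollified system reads
\begin{equation}
\partial_t(\overline{\rho}\,\widetilde u) + \div(\overline{\rho}\,\widetilde{u\otimes u}) + \nabla \overline{p(\rho)} = \overline{\rho f}, \qquad \partial_t\overline{\rho} + \div(\overline{\rho}\,\widetilde u)=0 .
\end{equation}
Thanks to \eqref{no vacuum} we have $\overline{\rho}\ge c$, so $\widetilde u$ is smooth in $x$ with bounds depending on $\ell$. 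In time, the quantities $\partial_t \overline{\rho}$ and $\partial_t(\overline\rho \widetilde u)$ are distributions given by the equations, integrable in $t$ with values in smooth functions. This is exactly why no time regularity is needed: all manipulations are chain rules in $t$ for functions that are smooth in $x$ and $W^{1,1}$ in $t$.

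The plan is to compute the local energy balance of the regularized quantities $\tfrac12\overline\rho|\widetilde u|^2 + \P(\overline\rho)$ exactly. We test the momentum equation against $\widetilde u$ and use the continuity equation in the form $\partial_t(\overline\rho \widetilde u)\cdot\widetilde u = \partial_t(\tfrac12\overline\rho|\widetilde u|^2) + \tfrac12|\widetilde u|^2\partial_t\overline\rho$. This gives
\begin{equation}
\partial_t\Big(\tfrac12\overline\rho|\widetilde u|^2\Big) + \div\Big(\tfrac12\overline\rho|\widetilde u|^2\widetilde u\Big) + \div\big(\overline\rho(\widetilde{u\otimes u}-\widetilde u\otimes\widetilde u)\big)\cdot\widetilde u + \widetilde u\cdot\nabla\overline{p(\rho)} = \overline{\rho f}\cdot\widetilde u .
\end{equation}
We then integrate by parts the commutator term:
\[
\div(\overline\rho\,\tau)\cdot\widetilde u = \div(\overline\rho\,\tau\widetilde u) - \overline\rho\,\tau : E\widetilde u, \qquad \tau := \widetilde{u\otimes u}-\widetilde u\otimes\widetilde u .
\]
Since $\tau$ is symmetric, only the symmetric gradient appears, and this produces $J_1^\ell$. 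For the pressure we write
\[
\widetilde u\cdot\nabla\overline{p(\rho)} = \div(\overline{p(\rho)}\widetilde u) - \overline{p(\rho)}\div\widetilde u .
\]
We then use the renormalized continuity equation for the smooth density $\overline\rho$. Since $\P$ satisfies $\rho\P'(\rho)-\P(\rho)=p(\rho)$, it gives $\partial_t\P(\overline\rho) + \div(\P(\overline\rho)\widetilde u) + p(\overline\rho)\div\widetilde u = 0$. Adding these identities yields the exact balance
\begin{equation}
\partial_t\Big(\tfrac12\overline\rho|\widetilde u|^2+\P(\overline\rho)\Big) + \div\Big(\widetilde u\big(\tfrac12\overline\rho|\widetilde u|^2 + \P(\overline\rho) + \overline{p(\rho)}\big) + \overline\rho\,\tau\widetilde u\Big) - \overline{\rho f}\cdot\widetilde u = J_1^\ell + J_2^\ell .
\end{equation}
The renormalization step is legitimate because $\overline\rho$ takes values in a compact subset of $(0,\infty)$, where $\P$ is $C^1$; since $p$ is only continuous, $\P$ is exactly $C^1$ there, which suffices.

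It remains to pass to the limit $\ell\to0$ in the left-hand side in the sense of distributions. We will show it converges to $-\D$ as defined in \eqref{eq: defect distribution}, so that $J_1^\ell+J_2^\ell$ converges as well and \eqref{formula D} follows.
\begin{itemize}
\item Since $\overline\rho\ge c$ and $\rho\in L^\infty$, we have $\widetilde u\to u$ in $L^3_{loc}$. Indeed, $\overline{\rho u}\to\rho u$ and $\overline\rho\to\rho$ strongly, and the quotient is controlled by $c^{-1}$.
\item Consequently $\overline\rho|\widetilde u|^2\widetilde u\to\rho|u|^2u$ in $L^1_{loc}$, and $\P(\overline\rho)\to\P(\rho)$ and $\overline{p(\rho)}\to p(\rho)$ in every $L^q_{loc}$, $q<\infty$, by continuity and boundedness.
\item Finally $\overline{\rho f}\cdot\widetilde u\to\rho f\cdot u$ in $L^1_{loc}$, using $f\in L^{3/2}$ and $\widetilde u \to u$ in $L^3$.
\end{itemize}

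The main point to check carefully is the vanishing of the flux commutator $\overline\rho\,\tau\widetilde u$ in $L^1_{loc}$. For this we write $\overline\rho\,\tau = \overline{\rho u\otimes u} - \overline{\rho u}\otimes\overline{\rho u}/\overline\rho$, which converges in $L^{3/2}_{loc}$ to $\rho u\otimes u-\rho u\otimes u=0$, again using the lower bound on $\overline\rho$ and strong $L^3$ convergence. Paired with $\widetilde u$, which is bounded in $L^3$, this product goes to zero in $L^1_{loc}$.

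The only genuinely delicate step is justifying the time chain rules above without time regularity. We handle this by also mollifying in time at a scale $\epsilon$ with $\ell$ fixed, performing the computation for the doubly regularized quantities, and letting $\epsilon\to0$ first. This is possible because spatial smoothness at fixed $\ell$ makes all nonlinear compositions continuous under time mollification, by a DiPerna--Lions type commutator argument in $t$.
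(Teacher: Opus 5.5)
Your argument is the paper's proof. You mollify in space, test the momentum equation with $\widetilde{u}$, renormalize the mollified continuity equation with $\P'(\overline{\rho})$ using $z\P'(z)=\P(z)+p(z)$, add the two, and let $\ell\to0$. Your limit passage is more detailed than the paper's, which only says ``by the assumptions on $u,\rho,f$''. In particular, the $L^{3/2}_{loc}\times L^3$ argument showing $\overline{\rho}\,\tau\widetilde{u}\to0$ in $L^1_{loc}$ is correct.

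There is, however, a sign error in your assembled balance.

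Your own intermediate identities give $\div(\overline{\rho}\,\tau)\cdot\widetilde{u}=\div(\overline{\rho}\,\tau\widetilde{u})+J_1^\ell$, because $-\overline{\rho}\,\tau:E\widetilde{u}=+J_1^\ell$. They also give $-\overline{p(\rho)}\div\widetilde{u}+p(\overline{\rho})\div\widetilde{u}=+J_2^\ell$. Both of these terms sit on the left-hand side, so moving them across gives
\[
\partial_t\Big(\tfrac12\overline{\rho}\abs{\widetilde{u}}^2+\P(\overline{\rho})\Big)+\div\Big(\widetilde{u}\big(\tfrac12\overline{\rho}\abs{\widetilde{u}}^2+\P(\overline{\rho})+\overline{p(\rho)}\big)+\overline{\rho}\,\tau\widetilde{u}\Big)-\overline{\rho f}\cdot\widetilde{u}=-\big(J_1^\ell+J_2^\ell\big),
\]
not $+(J_1^\ell+J_2^\ell)$. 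This matters. Combined with your (correct) statement that the left-hand side tends to $-\D$, the identity as you wrote it would give $\D=-\lim_{\ell\to0}(J_1^\ell+J_2^\ell)$, the opposite of the claim. With the sign corrected, the conclusion follows exactly as you describe.

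The time mollification in your last paragraph is unnecessary. As you observe at the start, for fixed $\ell$ the maps $t\mapsto\overline{\rho}(t,x)$ and $t\mapsto\overline{\rho u}(t,x)$ are absolutely continuous. Locally, $\partial_t\overline{\rho}\in L^3_t C^k_x$ and $\partial_t\overline{\rho u}\in L^{3/2}_t C^k_x$. Since $\overline{\rho}\geq c$ and $\P$ is $C^1$ on the range of $\overline{\rho}$, the chain rules hold directly, with no DiPerna--Lions commutator needed. This is also the accurate form of the paper's claim that these time derivatives lie in $L^\infty_{t,x}$; that claim would require $u$ and $f$ to be bounded.
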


The proof of \cref{t: main DR} relies on careful manipulations on the PDE. The procedure is somewhat similar to the well known Constantin--E--Titi regularization scheme for the incompressible setting \cite{CET94}. To study the evolution of the kinetic energy, it would be natural to multiply the mollified momentum equation by the mollified velocity field $\overline{u}$ and then pass to the limit with respect to the mollification parameter. However, in the compressible setting (as well as in the inhomogeneous incompressible one discussed in \cref{s: IIE}), the system provides equations for the momentum and for the density. In a low regularity setting, it is not clear how to derive an equation for the velocity field. Thus, instead of $\overline{u}$, we multiply the mollified momentum equation by $\widetilde{u}$, which approximates $u$ in the limit. After some computations, we find
\begin{align} \label{eq: heuristic 1}
    \partial_t \left( \frac{\overline{\rho} \abs{\widetilde{u}}^2}{2}\right) + \div \left( \widetilde{u} \left(  \frac{ \overline{\rho} \abs{\widetilde{u}}^2}{2} + \overline{p}  \right) + \overline{\rho u} (\widetilde{u\otimes u} - \widetilde{u} \otimes \widetilde{u} ) \right) = \overline{p} \div (\widetilde{u}) + \overline{\rho f} \cdot \widetilde{u} -  J_1^\ell,
\end{align}
where $J_1^\ell$ is defined by \eqref{eq: J_1}. Before taking the limit $\ell \to 0$, the main issue is to incorporate the term $\overline{p} \div(\widetilde{u})$ into the spatial divergence. This drives the choice of \eqref{internal energy} as the internal energy, and is related to the renormalization theory of the continuity equation (see \cite{Cr09} for a survey). Given $\beta \in C^1(\R;\R)$ and assuming that $\rho, u$ are smooth, we multiply the continuity equation by $\beta'(\rho)$ and by the chain rule we find 
\begin{equation} \label{eq: heuristic 2}
    \partial_t \beta(\rho) + \div(u \beta(\rho)) = \div(u) \left[ \beta(\rho) - \rho \beta'(\rho) \right].  
\end{equation}
Therefore, it would be natural to choose $\beta$ so that when we sum \eqref{eq: heuristic 1} and  \eqref{eq: heuristic 2} the term $p \div(u)$ simplifies with $[\beta(\rho) - \rho \beta'(\rho)] \div(u)$. A direct computation shows that 
\begin{equation}
    \rho \beta'(\rho) - \beta(\rho) = p(\rho) \qquad \Longleftrightarrow \qquad \beta(\rho) = \rho \left( c+ \int_1^\rho \frac{p(r)}{r^2} \, dr\right) \qquad \text{for some } c \geq 0,  
\end{equation} 
Working with weak solutions, we need to mollify the continuity equation as well, and further error terms must be taken into account. This gives rise to the term $J_2^\ell$ defined by \eqref{eq: J_2}.  

Once \cref{t: main DR} is established, we compute the dissipation for bounded solutions where the velocity has spatial bounded deformation and the density has spatial bounded variation. No time regularity is assumed for both the velocity and the density. In the following statement, $\J_{u_t}$ is the jump set in space of the velocity field $u_t$, which is known to be a $(d-1)$-countably rectifiable set oriented by a unit normal $n: \J_{u_t} \to \mathbb{S}^{d-1}$, and $\rho_t^\pm, u_t^\pm$ are respectively the one-sided limits of $\rho_t, u_t$ at $\J_{u_t}$. Consequently, $(u^{\pm}_t)_n : = u^\pm_t \cdot n$ is the normal component of $u^\pm_t$ in the jump set. We refer to \cref{s: tools} for a detailed explanation of the notation. 

\begin{theorem} \label{T: main bounded var}
Let $(\rho,u) \in L^\infty_{t,x}$ be a weak solution to \eqref{compr Euler} with $p \in C(\R)$ such that $p(0)=0$ and external force $f \in L^1_{t,x}$. Assume that $u \in  L^1_t BD_x$ and $\rho \in  L^1_t BV_x$ satisfies \eqref{no vacuum}. Then $\D$ satisfies 
\begin{equation} \label{eq: formula dissipation explicit}
    \D = - \left[ (\Gamma \Psi + \Pi) \,  \div(u_t)\llcorner \J_{u_t} \right] \otimes dt = - \left[ (\Gamma \Psi + \Pi) \, ((u_t^+)_n - (u_t^-)_n) \,  \mathcal{H}^{d-1}\llcorner \J_{u_t} \right] \otimes dt, 
\end{equation} 
where $\Pi(\rho_t^+, \rho_t^-)$ is defined by \eqref{eq: function Pi} and we set 
\begin{equation} \label{eq: function Gamma}
    \Gamma(\rho_t^+, \rho_t^-, u_t^+, u_t^-) : =  \rho_t^+ \rho_t^- \abs{u_t^+-u_t^-}^2 - (p(\rho_t^+) - p(\rho_t^-)) \, (\rho_t^+ -\rho_t^-), 
\end{equation}
\begin{equation}
    \Psi(\rho_t^+, \rho_t^-) :=  \begin{cases}
        \displaystyle \frac{\rho_t^+ + \rho_t^-}{2(\rho_t^+ - \rho_t^-)^2} - \rho_t^+ \rho_t^- \frac{\ln(\rho_t^+) - \ln(\rho_t^-)}{(\rho_t^+ - \rho_t^-)^3} & \text{ if } \rho_t^+ \neq \rho_t^-, 
        \\ \displaystyle (6 \rho_t)^{-1}  & \text{ if } \rho_t^+ = \rho_t^- = : \rho_t. 
    \end{cases} \label{eq: function Psi}
\end{equation} 
\end{theorem}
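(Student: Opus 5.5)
The proof will start from \cref{t: main DR}: since $\rho$ satisfies \eqref{no vacuum} and $(\rho,u)\in L^\infty_{t,x}$, we have $\D=\lim_{\ell\to0}(J_1^\ell+J_2^\ell)$ in distributions. The task is then to find the distributional limit of $J_1^\ell$ and $J_2^\ell$ for a.e. fixed time, and to integrate in $t$ using dominated convergence. The time integration is possible because all bounds will be of the form $C(\|\rho\|_\infty,\|u\|_\infty,c)\,(|Eu_t|+|D\rho_t|)(B_{\ell}(\cdot))$-type estimates, which are integrable since $u\in L^1_tBD_x$ and $\rho\in L^1_tBV_x$. No time regularity is needed, because $J_1^\ell,J_2^\ell$ involve only spatial derivatives. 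So I fix a good time $t$ and work with $\rho_t\in BV\cap L^\infty$ and $u_t\in BD\cap L^\infty$ with $\rho_t\ge c$.

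Next I will show that $J_i^\ell$ are locally uniformly bounded in $L^1$ and that their limits concentrate on $\J_{u_t}$. Using $\widetilde u=\overline{\rho u}/\overline\rho$, $E\widetilde u$ is controlled by $\ell^{-1}$ times local oscillations. The commutators $\widetilde{u\otimes u}-\widetilde u\otimes\widetilde u$ and $p(\overline\rho)-\overline{p(\rho)}$ are bounded by local oscillations as well. In the Ambrosio/\cite{Inv25} manner, $|J_i^\ell|\lesssim \ell^{-1}\fint_{B_\ell}|v-v(x)|^2$ with $v=(\rho,u)$, which is dominated by $(|Du_t|+|D\rho_t|)(B_{2\ell}(x))\ell^{-d}$. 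In BD I will instead use $|Eu|$ plus the BD Poincaré-type control, together with boundedness. This gives $\sup_\ell\|J_i^\ell\|_{L^1_{loc}}<\infty$, so limit measures exist along subsequences and are absolutely continuous with respect to $|Eu_t|+|D\rho_t|$.

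The diffuse parts vanish: at Lebesgue/approximate differentiability points the commutators are $o(1)$ while $E\widetilde u$ stays bounded, or the quadratic oscillation is $o(\ell)$. The Cantor parts also vanish by the usual blow-up argument, since the blow-ups of Cantor points are not jumps and the product of a small commutator with a derivative is negligible. Jump points of $\rho_t$ with no jump in $u_t$ drop out too, because $E\widetilde u$ is then of order $\ell^{-1}|\rho^+-\rho^-|$ times a factor that is compensated by the vanishing velocity jump. I expect this bookkeeping, especially in BD, where only the symmetric gradient is a measure and traces of $\rho$ on $\J_{u_t}$ need the $\mathcal H^{d-1}$-rectifiability of the jump sets, to be the technical obstacle.

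The main computation is at $\mathcal H^{d-1}$-a.e. $x\in\J_{u_t}$. I blow up: $\rho_t(x+\ell y)\to\rho^\pm$ and $u_t(x+\ell y)\to u^\pm$ on the half-spaces $\{\pm y\cdot n>0\}$. Then $\ell^{d-1}J_i^\ell(x+\ell\,\cdot)$ converges to an explicit function of $s=y\cdot n$. Writing $\theta(s)=\int_{\{z\cdot n<s\}}\eta$, one has $\overline\rho=\rho^-+(\rho^+-\rho^-)\theta$ and $\widetilde u=u^-+\frac{\rho^+\theta}{\overline\rho}(u^+-u^-)$. Here $E\widetilde u=\partial_s\big(\tfrac{\rho^+\theta}{\overline\rho}\big)\,\mathrm{sym}((u^+-u^-)\otimes n)$, and $\widetilde{u\otimes u}-\widetilde u\otimes\widetilde u=\frac{\rho^+\rho^-\theta(1-\theta)}{\overline\rho^2}(u^+-u^-)^{\otimes2}$.

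Integrating over $s$ with the change of variable $\theta\in[0,1]$ makes the kernel disappear, which explains why the result is kernel independent. The $J_1$ term gives $\rho^+\rho^-|u^+-u^-|^2\,(u^+-u^-)\cdot n$ times $\int_0^1\frac{\rho^+\rho^-\theta(1-\theta)}{\overline\rho^3}\,d\theta$, and I expect this integral to produce $\Psi$. The $J_2$ term gives $(u^+_n-u^-_n)\int_0^1\frac{\rho^+\rho^-}{\overline\rho^2}\big(p(\overline\rho)-\rho^-p^--\theta(p^+-p^-)\big)d\theta$ after writing $\overline{p(\rho)}=p^-+\theta(p^+-p^-)$. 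Substituting $r=\overline\rho$ and integrating by parts should split this into $-\Pi$ plus the pressure part $-(p^+-p^-)(\rho^+-\rho^-)\Psi$ of $\Gamma\Psi$. The $\Pi$ piece is consistent with \cref{T: dissipation on hypersurfaces}. Verifying these identities is a calculus exercise. Finally, the blow-up limits identify the densities of the limit measure with respect to $\mathcal H^{d-1}\llcorner\J_{u_t}$. Since $\div u_t\llcorner\J_{u_t}=(u^+_n-u^-_n)\mathcal H^{d-1}$, this yields \eqref{eq: formula dissipation explicit}, and because the limit is unique the whole family converges, not just subsequences.
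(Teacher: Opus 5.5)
Your overall architecture agrees with the paper: start from \cref{t: main DR}, work at a.e.\ fixed time, integrate in $t$ by dominated convergence, and finish with the calculus of \cref{L: equivalent formulas}. Your jump-point computation also agrees. The profiles $\overline\rho=\rho^-+(\rho^+-\rho^-)\theta$ and $\widetilde u=u^-+\frac{\rho^+\theta}{\overline\rho}(u^+-u^-)$, the commutator $\frac{\rho^+\rho^-\theta(1-\theta)}{\overline\rho^{2}}(u^+-u^-)\otimes(u^+-u^-)$, and the substitution $\theta\in[0,1]$ are exactly what \cref{P: double mollification} encodes. They produce the integrands of \cref{P: J^1} and \cref{P: J^2}.

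The gap is the step ``the blow-up limits identify the densities of the limit measure''. The pointwise limit of $\ell J_i^\ell(x+\ell\,\cdot)$ at a fixed $x$ is a limit in which the observation radius is tied to $\ell$. (The right normalization is $\ell$, not $\ell^{d-1}$, since $J_i^\ell=O(\ell^{-1})$ near a jump.) The density of a weak-$*$ limit $\mu$ at $x$ is instead $\lim_{r\to0}\lim_{\ell\to0}r^{1-d}\int_{B_r(x)}J_i^\ell$, which sees $J_i^\ell$ at every point of $B_r(x)$. Uniform $L^1$ bounds together with $\mu\ll\abs{Eu_t}+\abs{\nabla\rho_t}$ do not let you exchange these limits. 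Your treatment of the diffuse part has the same defect. Smallness of the commutator at a point, times a mollified derivative that is bounded (absolutely continuous part) or even unbounded (Cantor part) at that point, does not by itself make $J_i^\ell$ small as a measure. Moreover, the fact actually needed is that $\abs{E(\rho_tu_t)}+\abs{\nabla\rho_t}$-a.e.\ point off $\J_{u_t}$ is a Lebesgue point of both $\rho_t$ and $u_t$, not merely that blow-ups there are not jumps.

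The missing device is the factorization the paper uses. Write
\[
E\widetilde u=\frac{E(\overline{\rho u})}{\overline\rho}-\frac{\overline{\rho u}\otimes\nabla\overline\rho+\nabla\overline\rho\otimes\overline{\rho u}}{2\overline\rho^{2}},
\]
where $E(\overline{\rho u})=E(\rho u)*\eta_\ell$ is a mollified measure by \cref{L: product rule BV BD}. Then every piece of $J_1^\ell$ and $J_2^\ell$ has the form $F(\overline V)\,(\lambda*\eta_\ell)$, with $F$ continuous, $V=(\rho u\otimes u,\rho u,\rho,p(\rho))$ and $\lambda\in\{E(\rho u),\nabla\rho\}$. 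Moving the mollifier onto the test function gives $\langle F(\overline V)\,\lambda*\eta_\ell,\phi\rangle=\int(F(\overline V)\phi)*\eta_\ell\,d\lambda$. Moreover, $(F(\overline V)\phi)*\eta_\ell(x)\to\phi(x)\int_0^1F(sV^++(1-s)V^-)\,ds$ boundedly at every $x\in\mathcal B_V$; this pointwise limit is precisely your blow-up computation. Dominated convergence in $\abs{\lambda}$ then yields the limit of the whole family (\cref{P: mollification of function and measure}). Since $\mathcal B_V^c$ is $\abs{\lambda}$-negligible (\cref{P: prop BD}, \cref{R: negligible set of BD BV}) and the resulting densities vanish off $\J_{u_t}$, the absolutely continuous part, the Cantor part and $\J_{\rho_t}\setminus\J_{u_t}$ are disposed of at once.

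Two smaller points. First, in $BD$ your oscillation bound is not available: $Du_t$ is not a measure, and Poincar\'e holds only modulo rigid motions, whose Favre averages are not rigid motions when $\rho$ varies. The correct domination is simply $\abs{J_i^\ell}\lesssim(\abs{E(\rho u)}+\abs{\nabla\rho})*\eta_\ell$, which also gives the $L^1_t$ bound for the time integration. Second, check your signs. The $J_1$ contribution is $-\rho^+\rho^-\abs{u^+-u^-}^2\Psi\,(u^+_n-u^-_n)$. The $J_2$ contribution is $[(p(\rho^+)-p(\rho^-))(\rho^+-\rho^-)\Psi-\Pi](u^+_n-u^-_n)$, and its integrand contains $p(\overline\rho)-p(\rho^-)-\theta(p(\rho^+)-p(\rho^-))$, not $\rho^-p^-$. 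Only with these signs does the sum equal $-(\Gamma\Psi+\Pi)(u^+_n-u^-_n)$.
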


We show that the dissipation is related to the concentration of the divergence of the velocity field on $(d-1)$-countably rectifiable sets in space, whose time evolution might be very rough. The proof relies on the explicit computation of the limit of terms $J_1^\ell, J_2^\ell$ given by \cref{t: main DR}, following the blow up analysis made by the author in \cite{Inv25} for the inhomogeneous incompressible case. For example, for a $BV$ function, it turns out that $\mathcal{H}^{d-1}$ almost every point is a Lebesgue point or a jump point, and the gradient is absolutely continuous with respect to $\mathcal{H}^{d-1}$. As we recall in \cref{s: tools}, a similar property holds for $BD$ vector fields. 

Some remarks are in order. 
\begin{itemize}
    \item[(i)] The reader might compare our \cref{T: main bounded var} with \cite{FGSW17}*{Theorem 4.3}, where the authors prove energy conservation for weak solutions $(\rho, u)\in L^\infty_{t,x} \cap L^\infty_t BV_x $ to \eqref{compr Euler} such that $\rho_t, u_t$ are continuous in space for almost every time slice\footnote{The pressure law is also assumed to be of class $C^2$.}. In that case, the density is allowed to vanish, and the $\widetilde{u}$ cannot be used as a test function. The major differences from our statement are the continuity in space, which forbids the formation of shocks, and the fact that \eqref{no vacuum} is traded with more time integrability of the spatial derivative, namely $L^\infty_t BV_x$ versus $L^1_t BV_x$. 
    \item[(ii)] For stationary solutions, the notions of trace considered in \cref{T: dissipation on hypersurfaces} and \cref{T: main bounded var} agree. We will show that the function $\Gamma$ defined by \eqref{eq: function Gamma} vanishes and the formulas provided by \cref{T: main bounded var} and \cref{T: dissipation on hypersurfaces} agree. The same holds in the time dependent setting, provided that some conditions on the time disintegration of the time derivatives of the density and the momentum are satisfied. See the discussion in \cref{ss: comparing formulas} for details.  
    \item [(iii)] If the pressure law is convex, it turns out that $\D \geq 0$ if and only if $\div(u) \leq 0$ as space-time measures. See \eqref{eq: formula density} and \eqref{eq: formula dissipation preliminary}. This is consistent with the fact that the dissipation is nonnegative if and only if the particles accumulate at the shock discontinuity. 
\end{itemize}

\subsection{An attempt of general statement}

To summarize our \cref{T: dissipation on hypersurfaces} and \cref{T: main bounded var}, we propose the following statement, which we include for the reader's convenience. See \cref{s: tools} for the notation. 

\begin{corollary} \label{T: full statement} 
Let $(\rho, u) \in L^\infty_{t,x}$ be a weak solution to \eqref{compr Euler} with $p \in C(\R)$ such that $p(0)=0$ and external force $f \in L^1_{t,x}$. Assume that the dissipation $\D$ given by \eqref{eq: defect distribution} is a space-time Radon measure. Suppose that $\Omega \times (0,T) = A \cup B$, where $A, B$ satisfy the following conditions:  
\begin{enumerate}
    \item $A$ is a closed set and $\rho (t,x) =0$ $\mathcal{L}^{d+1}$-almost everywhere in the interior of $A$; 
    \item $\partial A$ is $\mathcal{H}^d$-countably rectifiable and $\rho,u, p$ possess bilateral space-time traces at $\partial A$;  
    \item there are intervals $I_n \subset (0,T)$ and open sets $B_n \subset \Omega$ such that
    $$B = \bigcup_{n} I_n \times B_n, $$
    $\rho \in L^1(I_n; BV(B_n)), u \in L^1(I_n; BD(B_n))$ and there are constants $c_n>0$ such that 
    \begin{equation} \label{eq: local no vacuum}
        \rho(t,x) \geq c_n >0 \qquad \text{a.e. in } I_n \times B_n. 
    \end{equation} 
\end{enumerate} 
Then we have $\abs{\D} (\overline{A}) =0$ and for any $n\in \N$ we have
$$  \D\llcorner B_n = -  (\Gamma \Psi + \Pi) \, ( \mathcal{H}^{d-1} \llcorner (\J_{u_t} \cap B_n)  \otimes (\mathcal{L}^1\llcorner I_n), $$ 
where $\Gamma, \Psi, \Pi$ are given by \eqref{eq: function Gamma}, \eqref{eq: function Psi}, \eqref{eq: function Pi}, respectively. 
\end{corollary}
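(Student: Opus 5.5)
The corollary is local in nature, so the plan is to decompose $\Omega\times(0,T)$ into three pieces and treat each with an earlier result: the interior of $A$, the boundary $\partial A$, and the cylinders $I_n\times B_n$. Since $\D$ is assumed to be a Radon measure, it suffices to compute $\D$ on each piece separately and then patch the pieces together using locality of distributions.

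\emph{Interior of $A$.} Here $\rho=0$ a.e. Since $p(0)=0$ and $\P(0)=0$, every term in \eqref{eq: defect distribution} vanishes: the energy density $\tfrac{\rho\abs{u}^2}{2}+\P(\rho)$, the flux $u(\tfrac{\rho\abs{u}^2}{2}+p(\rho)+\P(\rho))$, and the forcing $\rho f\cdot u$ are all zero a.e. in $\mathrm{int}\,A$. Testing \eqref{eq: defect distribution} with $\varphi\in C^\infty_c(\mathrm{int}\,A)$ therefore gives $\D\llcorner \mathrm{int}\,A=0$.

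\emph{Boundary $\partial A$.} By (2) we may apply \cref{T: dissipation on hypersurfaces} with $\Sigma=\partial A$, which gives $\D\llcorner\partial A=-(u_n^+-u_n^-)\,\Pi(\rho^+,\rho^-)\,\mathcal H^d\llcorner\partial A$. To conclude that this vanishes, the idea is as follows. The trace of $\rho$ from the side of $\mathrm{int}\,A$ is $0$ wherever $\partial A$ is a boundary of positive density of $\mathrm{int}\,A$. In that case $\Pi=\tfrac{p(\rho^\pm)}{2}$, and the mass condition $(\mathcal Z^+-\mathcal Z^-)\cdot n=0$ from \eqref{eq: no jump of U,Z} gives $\rho^+(n_t+u^+\cdot n_x)=0$. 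The momentum condition then gives $p(\rho^+)n_x=0$. If $n_x\neq0$ we obtain $p(\rho^+)=0$, hence $\Pi=0$. If $n_x=0$ then $u^\pm_n=0$. In either case the product $(u_n^+-u_n^-)\Pi$ vanishes, so $\D\llcorner\partial A=0$.

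\emph{Points where both one-sided traces are nonzero.} This is the delicate step. At points of $\partial A$ where neither side belongs to $\mathrm{int}\,A$ (so both traces may be nonzero), the argument above does not apply. I would first try to show that such points form an $\mathcal H^d$-null subset of $\partial A$, using that $A$ is closed and $\partial A$ is rectifiable. If that fails, I would instead observe that these points are covered by $B$ and absorb them into the third case. I expect this bookkeeping — reconciling $\partial A$ with $B$ and avoiding double counting on overlaps — to be the main obstacle.

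\emph{The cylinders $B$.} On each open cylinder $I_n\times B_n$, \cref{T: main bounded var} applies after localization: its proof only involves mollification at scale $\ell$, so we work with test functions supported in compact subsets of $I_n\times B_n$ and $\ell$ small, and \eqref{eq: local no vacuum} replaces \eqref{no vacuum}. This yields the stated formula for $\D\llcorner(I_n\times B_n)$. Finally, since $\abs{\D}$ is a Radon measure, a locally finite covering argument (or simply restricting $\D$ to each open piece) lets us combine the three descriptions, which gives $\abs{\D}(A)=0$ together with the formula on each $B_n$.
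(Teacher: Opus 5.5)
Your treatment of the three pieces matches the paper's own proof:
\begin{itemize}
\item $\D$ vanishes on $\mathrm{int}\,A$, because every term of \eqref{eq: defect distribution} vanishes where $\rho=0$.
\item At points of $\partial A$ where one trace of $\rho$ is $0$, \eqref{eq: momentum traces}--\eqref{eq: continuity traces} force $p(\rho^\pm)\,n_x=0$. Hence $(u_n^+-u_n^-)\,\Pi(\rho^+,\rho^-)=0$, and \cref{T: dissipation on hypersurfaces} gives $\D\llcorner\partial A=0$.
\item On the cylinders one localizes \cref{T: main bounded var}.
\end{itemize}

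The step you flag as delicate, however, is a genuine gap, and neither of your repairs can close it. Hypotheses (1)--(3) say nothing about points of $\partial A$ at which $\mathrm{int}\,A$ has zero density. Here is a configuration satisfying all of them:
\begin{itemize}
\item Let $\Omega=B_1\subset\R^d$, $p(\rho)=\rho^2$, $f=0$.
\item Take the stationary planar shock $(\rho,u)=(1,\sqrt{6}\,e_1)$ for $x_1<0$ and $(\rho,u)=(2,\tfrac{\sqrt{6}}{2}\,e_1)$ for $x_1>0$. The fluxes $\rho u_1=\sqrt{6}$ and $\rho u_1^2+p=7$ match, so this is a weak solution with $\D$ a Radon measure.
\item Set $A=\{(t,x):x_1=0\}$ and let $B$ be the two open half-cylinders $\{\pm x_1>0\}$.
\end{itemize}
Then $A$ is closed with empty interior, so (1) is vacuous, and (2), (3) hold trivially with $c_n=1$. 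Yet with $n=(0,e_1)$, \cref{T: dissipation on hypersurfaces} gives $u_n^+-u_n^-=-\tfrac{\sqrt{6}}{2}$ and $\Pi(2,1)=\tfrac{5}{2}-2=\tfrac{1}{2}$. Hence $\D\llcorner A=\tfrac{\sqrt{6}}{4}\,\mathcal H^d\llcorner A\neq0$.

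This defeats both of your repairs. The problematic set is all of $\partial A$, so it is not $\mathcal H^d$-null, and it does not meet $B$. Even when such points do lie in some $I_n\times B_n$, \cref{T: main bounded var} assigns them a generally nonzero mass. That would contradict $\abs{\D}(\overline A)=0$ rather than prove it.

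So the first conclusion of the corollary is false under (1)--(3) as written. The paper's proof gets past this point only through the sentence ``without loss of generality, $\rho^+=0$ at $\mathcal H^d$-a.e.\ point of $\partial A$''. That does not follow from the hypotheses; it is exactly the missing assumption. If you add it, your proof is complete and coincides with the paper's. A convenient sufficient condition is that $\mathrm{int}\,A$ has positive upper density at $\mathcal H^d$-a.e.\ point of $\partial A$; by your own density observation, this forces one trace of $\rho$ to vanish.

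Note also that localizing \cref{T: main bounded var} on $I_n\times B_n$ produces the factor $((u_t^+)_n-(u_t^-)_n)$. This factor is missing from the displayed formula for $\D\llcorner B_n$, so you should not claim that step ``yields the stated formula'' verbatim.
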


\section{Tools} \label{s: tools}

We introduce some useful notation that will be kept throughout the manuscript. Given a vector field $u: \R^d \to \R^d$, we denote by $\nabla u, E u $ the distributional gradient and its symmetric part, respectively. Given an open set $\Omega \subset \R^d$, we denote by $\mathcal{M}(\Omega; \R^N)$ the set of Radon measures (i.e. locally finite Borel measures) on $\Omega$ with values in $\R^N$, endowed with the total variation norm $\norm{\mu}_{\mathcal{M}}$ (see e.g. \cite{EG15}). If the domain and the target space are clear from the context, we adopt the shorthand $\mathcal{M}(\Omega; \R^N) = \mathcal{M}_x$. The weak-$*$ convergence of measures is taken in duality with compactly supported continuous test functions. When dealing with distributions, weak-$*$ convergence is taken in duality with $C^\infty_c$. We often denote by $\langle\cdot, \cdot \rangle$ the duality pairing. 

For any $\gamma \in [0,d]$, we denote by $\mathcal{H}^\gamma$ the $\gamma$-dimensional Hausdorff measure and by $\mathcal{H}^d = \mathcal{L}^d$ the Lesbegue measure in $\R^d$. Fix an integer $0\leq m \leq d$. A set $\Sigma \subset \R^d$ is said to be $m$-countably rectifiable if 
$$\Sigma = \bigcup_{i\in \N} \Sigma_i,$$ where $\Sigma_i = f_i(\R^m)$ and $f_i: \R^m \to \R^d$ are Lipschitz maps. A set $\Sigma \subset \R^d$ is said to be $\mathcal{H}^m$-countably rectifiable if $\Sigma = N \cup E $, where $\mathcal{H}^m (N) =0$ and $E$ is $m$-countably rectifiable. See e.g. \cites{AFP00, Fed}.

\subsection{Blow up points} \label{ss:jumps}

We recall the notion of blow up for $L^1$ functions. To fix some terminology, we begin with the scalar valued case and then move to the vector valued one. Given $g \in L^1(\R^d; \R)$, we say that $x_0 \in \R^d$ is a blow up point for $g$ if there exists $v \in L^1(B_1; \R)$ such that
\begin{equation} \label{eq: blow up}
    \lim_{\ell \to 0} \int_{B_1} \abs{g(x_0 + \ell z ) - v(z)} \, dz = 0. 
\end{equation} 
Here, it is crucial to consider $\ell \to 0$, without passing to subsequences. It is clear that whenever $g$ admits a blow up at $x_0$, then the function $v$ is uniquely determined. We denote by $\S_g$ the complement of the set of Lebesgue points for $g$ 
\begin{equation}
    \mathcal{S}_g^c : = \left\{ x_0 \in \R^d \colon \text{ $g$ has a constant blow up at $x_0$} \right\}.  
\end{equation} 
For any $x_0 \in \S_g^c$, we refer to the value $g(x_0)$ as the \emph{approximate limit} of $g$ at $x_0$. By the Lebesgue differentiation theorem, the set $\S_g$ is $\mathcal{L}^d$-negligible. 

We consider another particularly relevant class of blow up points. Let $x_0\in \R^d$ be a blow up point and let $v$ be the corresponding blow up. We say that $x_0$ is a jump point if there exist $g^+(x_0) \neq g^-(x_0) \in \R$ and a direction $n \in \mathbb{S}^{d-1} $ such that 
\begin{equation}
    v = g^+(x_0) \mathds{1}_{B_1^{n_+}} + g^-(x_0) \mathds{1}_{B_1^{n_-}}, \qquad \text{where we set } B_1^{n^\pm} =  \{ y \in B_1 \, : \,  \pm \langle y, n \rangle \geq 0 \}. 
\end{equation}
We denote by $\J_g$ the collection of jump points of $g$. For any $x_0 \in \mathcal{J}_g$, we say that $g^\pm(x_0)$ are the \emph{one-sided Lebesgue limits} of $g$ at $x_0$ with respect to $n$. We will often drop the explicit dependence on $x_0$. The triplet $(g^+, g^-, n)$ is uniquely determined in $\J_g$, up to changing $n$ with $-n$ and $g^+$ with $g^-$. 

Given a scalar function $g \in L^1$, it can be proved that the set of points at which $g$ admits a non-constant blow up is $(d-1)$-countably rectifiable \cite{DelNin21}. We also remark that for a general function $g \in L^1$ without additional properties, there might be blow up points which are not Lebesgue points nor jump points. Since we will focus on Lebesgue points and jump points, it is convenient to denote by 
$$ \mathcal{B}_g := \S_g^c \cup \J_g. $$
We will tacitly assume that the triplet $(g^+, g^-, n_g)$ is defined for $x \in \mathcal{B}_g$, with the convention that $g^+(x) = g^-(x) = g(x) $ and $n(x) = e_d$ for any $x \in  \S_g^c$. Actually, any choice of $n(x)$ for $x\in \S_g^c$ would work. 

In the vector valued case, given $g\in L^1(\R^d; \R^m)$, we define 
$$\mathcal{B}_g := \bigcap_{i=1}^m \mathcal{B}_{g_i}. $$
We claim that the triplet $(g^+, g^-, n)$ is uniquely defined (up to the change of sign of $n$ and the permutation of $g^+$ with $g^-$) at $\mathcal{H}^{d-1}$-a.e. point in $\mathcal{B}_g$. Recall that for any $i=1, \dots, m$ the triplet $(g_i^+, g_i^-, n_{g_i})$ is well defined in $\mathcal{B}_{g_i}$. Hence, the vectors $g^\pm := (g_1^\pm, \cdots, g_m^\pm)$ are well defined in $\mathcal{B}_g$. However, for any $i \neq j$, we have 
$$n_{g_i} (x_0) = n_{g_j}(x_0) \qquad \text{ for $\mathcal{H}^{d-1}$ -a.e. } x_0 \in \J_{g_i}\cap \J_{g_j}. $$
Thus, setting $n(x_0) := n_{g_i} (x_0)$ at $x_0 \in \J_{g_i}$, the resulting vector is normal to $ \bigcup \J_{g_i}$ for $\mathcal{H}^{d-1}$-a.e. $x_0$. Then, we can arbitrarily extend $n$ to $\mathcal{B}_g$, for example, by setting $n(x_0) = e_d $ for any $x_0 \in \mathcal{B}_g \setminus \bigcup \J_{g_i}$. With this notation, for $\mathcal{H}^{d-1}$-a.e. $x_0 \in \mathcal{B}_g$ we have 
\begin{equation} \label{eq: blow up vector}
    \lim_{\ell \to 0} \int_{B_1^{n^\pm_g(x_0)}} \abs{g(x_0 + \ell z) - g^{\pm}(x_0) } \, dz = 0.  
\end{equation} 
We say that $x_0 \in \mathcal{B}_g$ is a Lebesgue point for $g$ if $x_0 \in \S_{g_i}^c$ for any $i = 1, \dots, m$. We say that $x_0 \in \mathcal{B}_g$ is a jump point for $g$ if $g^+(x_0) \neq g^-(x_0)$. When it is clear from the context, we simply denote $n_g$ by $n$. 

\begin{remark} \label{R: blow up of composition}
Take $g\in L^\infty(\R^d; \R^N)$
and a continuous function $\Psi: \R^N \to \R^M$. Let $(g^+, g^-, n_g)$ be the triplet associated with $g$, which is well defined $\mathcal{H}^{d-1}$-a.e. on $\mathcal{B}_g$. It is clear that $(g^+, g^-)$ are bounded Borel functions on $\mathcal{B}_g$. Moreover, since $F$ is uniformly continuous in the (essential) range of $g$, it follows that 
\begin{equation} \label{eq: blow up of composition}
    \lim_{\ell \to 0} \int_{B_1^{n^\pm_g(x_0)}} \abs{\Psi(g(x_0 + \ell z)) - \Psi(g^{\pm}(x_0)) } \, dz = 0 \qquad \text{for $\mathcal{H}^{d-1}$-a.e. $x_0 \in \mathcal{B}_g$}. 
\end{equation}
Therefore, we infer $\mathcal{B}_{g} \subset \mathcal{B}_{\Psi \circ g}$, and 
\begin{equation} \label{eq: triplet of composition}
    ( (\Psi\circ g)^+, (\Psi \circ g)^-, n_{\Psi \circ g} ) = (\Psi(g^+), \Psi(g^-), n_g) \qquad \text{for $\mathcal{H}^{d-1}$-a.e. $x_0 \in \mathcal{B}_g$}. 
\end{equation}
\end{remark}

\subsection{Measure divergence fields} \label{ss: MD fields and traces}

We recall the notion of distributional normal trace \cites{ACM05,Shv09,CCT19,CTZ09}. Given an open set $\Omega \subset \R^d$, we denote by $\mathcal{MD}^\infty(\Omega)$ the set of vector fields $ V \in L^\infty(\Omega;\R^d)$ such that $\div V \in \mathcal{M}(\Omega)$. For $V\in \mathcal{MD}^\infty(\Omega)$, the distributional outer normal trace on $\partial \Omega$ is the distribution supported on $\partial \Omega$ defined by the following Gauss--Green formula
\begin{equation} \label{eq: normal distributional trace}
    \langle \tr_n(V, \partial \Omega), \varphi \rangle := \int_{\Omega} \nabla \varphi \cdot V \, d  x + \int_{\Omega} \varphi \, d  (\div V) \qquad \forall\varphi \in C^\infty_c(\R^d).
\end{equation}
Assume now that $\Omega$ has a Lipschitz boundary oriented by the outer unit normal, denoted by $n_{\partial \Omega}$. If $V \in C^1(\overline{ \Omega};\R^d)$, then $\tr_n(V, \partial \Omega)$ is induced by the integration in $\partial \Omega$ of $V \cdot n_{\partial \Omega}$. If $V \in \mathcal{MD}^\infty(\Omega)$, then $\tr_n(V, \partial \Omega)$ is represented by a function in $L^\infty(\partial \Omega)$, still denoted by $\tr_n(V, \partial \Omega)$ (see e.g. \cite{ACM05}*{Proposition 3.2}). Moreover, this notion of trace is local in the sense that for any Borel set $\Sigma \subset \partial \Omega_1\cap \partial \Omega_2$ such that $n_{\partial \Omega_1}(x) = n_{\partial \Omega_2}(x)$ for $\mathcal{H}^{d-1}$-a.e. $x \in \Sigma$, where $\Omega_1, \Omega_2$ are Lipschitz open sets contained in $\Omega$, we find that the two traces coincide, i.e. $\tr_n(V, \partial \Omega_1) = \tr_n(V, \partial \Omega_2)$ for $\mathcal{H}^{d-1}$-a.e. $x \in \Sigma$ (see \cite{ACM05}*{Proposition 3.2}). This property allows for the definition of the distributional normal trace on an oriented Lipschitz hypersurface $\Sigma \subset \Omega$. Choose open sets $\Omega_1, \Omega_2$ such that $\Sigma \subset \partial \Omega_1 \cap \partial \Omega_2$ and $n_\Sigma(x) = n_{\partial \Omega_1}(x) = -n_{\partial \Omega_2}(x)$ for $\mathcal{H}^{d-1}$-a.e. $x \in \Sigma$. Then, we define 
$$\tr_n(V, \Sigma_-) : = \tr_n(V, \partial \Omega_1), \qquad \tr_n(V, \Sigma_+) := -\tr_n(V, \partial \Omega_2) \qquad \text{ on } \Sigma.$$

\begin{proposition}[\cite{ACM05}*{Proposition 3.4}]\label{P:div and normal trace} 
Given $V\in \mathcal{MD}^\infty(\Omega)$, the following holds: 
\begin{enumerate}
    \item $\div V \ll \mathcal{H}^{d-1}$; 
    \item for any Lipschitz hypersurface $\Sigma$ oriented by $n$, it holds 
    \begin{equation} \label{eq: divergence of hypersurface}
        \div V \llcorner \Sigma = [\tr_n (V,\Sigma_+)- \tr_n (V,\Sigma_-)]  \, \mathcal H^{d-1} \llcorner \Sigma. 
    \end{equation}
\end{enumerate}
\end{proposition}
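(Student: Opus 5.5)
The proposition has two parts, and I would prove them separately. Part (1) uses a capacity-type covering argument with cut-off functions. Part (2) splits the Gauss--Green formula \eqref{eq: normal distributional trace} across the two sides of $\Sigma$. I take for granted, from \cite{ACM05}*{Proposition 3.2} as recalled above, that the normal traces on Lipschitz boundaries are represented by $L^\infty$ functions.

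\emph{Step 1: $\div V \ll \mathcal{H}^{d-1}$.} Let $\mu := \div V$ and let $\Omega = P \cup N$ be a Hahn decomposition of $\mu$. By inner regularity it suffices to show that $\mu(K) = 0$ for every compact $K \subset \Omega$ with $\mathcal{H}^{d-1}(K) = 0$ that is contained in $P$ or in $N$.

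Fix $\varepsilon > 0$ and an open set $U \supset K$ with $U \Subset \Omega$ and $|\mu|(U \setminus K) < \varepsilon$. Since $\mathcal{H}^{d-1}(K) = 0$, we can cover $K$ by finitely many balls $B_{r_i}(x_i)$ with $B_{2r_i}(x_i) \subset U$ and $\sum_i r_i^{d-1} < \varepsilon$. Choose Lipschitz bumps $\phi_i$ equal to $1$ on $B_{r_i}(x_i)$, supported in $B_{2r_i}(x_i)$, with $|\nabla \phi_i| \le 1/r_i$, and set $\phi := \max_i \phi_i$. Then $\phi$ is Lipschitz, $0 \le \phi \le 1$, $\phi = 1$ on $K$, and $\phi$ is supported in $U$.

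The definition of the distributional divergence extends to compactly supported Lipschitz functions by mollification, since $\mu$ is a Radon measure and $V \in L^\infty$. This gives
\begin{equation}
\left| \int \phi \, d\mu \right| = \left| \int \nabla \phi \cdot V \, dx \right| \le \norm{V}_{L^\infty} \sum_i \frac{1}{r_i} \, \mathcal{L}^d(B_{2r_i}) \lesssim \norm{V}_{L^\infty} \, \varepsilon .
\end{equation}
On the other hand, $\left| \int \phi \, d\mu - \mu(K) \right| \le |\mu|(U \setminus K) < \varepsilon$. Letting $\varepsilon \to 0$ yields $\mu(K) = 0$, which proves (1).

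\emph{Step 2: the jump formula.} The statement is local, so by a partition of unity I may assume that $\Sigma$ is a piece of a Lipschitz graph inside a cylinder $U$. Let $\Omega_1, \Omega_2$ be the parts of $U$ below and above the graph, oriented so that $n_\Sigma = n_{\partial\Omega_1} = -n_{\partial\Omega_2}$ on $\Sigma$, and take $\varphi \in C^\infty_c(U)$.

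First, $\Sigma$ is $\mathcal{L}^d$-negligible, so
\begin{equation}
\int_\Sigma \varphi \, d\mu = \int_U \varphi \, d\mu - \int_{\Omega_1} \varphi \, d\mu - \int_{\Omega_2} \varphi \, d\mu,
\end{equation}
with
\begin{equation}
\int_U \varphi \, d\mu = -\int_{\Omega_1} \nabla\varphi \cdot V \, dx - \int_{\Omega_2} \nabla\varphi \cdot V \, dx .
\end{equation}
Second, by \eqref{eq: normal distributional trace},
\begin{equation}
\int_{\Omega_i} \varphi \, d\mu = \langle \tr_n(V, \partial\Omega_i), \varphi \rangle - \int_{\Omega_i} \nabla\varphi \cdot V \, dx .
\end{equation}
Substituting, the volume terms cancel and we are left with
\begin{equation}
\int_\Sigma \varphi \, d\mu = -\langle \tr_n(V, \partial\Omega_1), \varphi \rangle - \langle \tr_n(V, \partial\Omega_2), \varphi \rangle .
\end{equation}
The supports of $\varphi$ meet $\partial\Omega_i$ only on $\Sigma$, since the other boundary pieces lie on $\partial U$. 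Recalling $\tr_n(V, \Sigma_-) = \tr_n(V, \partial\Omega_1)$ and $\tr_n(V, \Sigma_+) = -\tr_n(V, \partial\Omega_2)$, and using that the traces are $L^\infty$ functions on $\partial\Omega_i$, the right-hand side equals
\begin{equation}
\int_\Sigma \varphi \, \bigl[ \tr_n(V, \Sigma_+) - \tr_n(V, \Sigma_-) \bigr] \, d\mathcal{H}^{d-1} .
\end{equation}
Since $\varphi$ is arbitrary, this is \eqref{eq: divergence of hypersurface} on $U$. The general case follows by the locality of traces (\cite{ACM05}*{Proposition 3.2}), which makes the construction independent of the choice of $\Omega_1, \Omega_2$, together with a countable cover of $\Sigma$ by such cylinders.

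The step I expect to require the most care is Step 1. The cut-off must be admissible in the Gauss--Green pairing, and the gradient bound must be paired correctly with the smallness of $\sum_i r_i^{d-1}$. Step 2 is mostly bookkeeping of the orientation and sign conventions.
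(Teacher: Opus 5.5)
Your proof is correct. The paper gives no argument of its own for this proposition; it quotes it from \cite{ACM05}, so the comparison is with the standard proofs.

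Your Step~2 is the standard argument. You apply \eqref{eq: normal distributional trace} on the two complementary sides $\Omega_1,\Omega_2$ of the graph in $U$, subtract from the identity on $U$, and the volume terms cancel. One point of bookkeeping: the identity $\int_{\Sigma}\varphi\,d\mu=\int_U\varphi\,d\mu-\int_{\Omega_1}\varphi\,d\mu-\int_{\Omega_2}\varphi\,d\mu$ comes from $U\setminus(\Omega_1\cup\Omega_2)=\Sigma\cap U$. The fact $\mathcal{L}^d(\Sigma)=0$ is what you actually use in the next display, to split $\int_U\nabla\varphi\cdot V\,dx$.

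For (1), your covering argument with the cut-off $\max_i\phi_i$ is valid and self-contained, using $\abs{\nabla\max_i\phi_i}\le\sum_i\abs{\nabla\phi_i}$ a.e. The other common route is local:
\begin{itemize}
\item testing with radial cut-offs gives $\abs{\mu(B_r(x))}\le C_d\norm{V}_{L^\infty}r^{d-1}$ for every ball;
\item Besicovitch differentiation gives $\abs{\mu}(B_r(x))\le 2\abs{\mu(B_r(x))}$ for small $r$ at $\abs{\mu}$-a.e. $x$;
\item the resulting bound $\limsup_{r\to0}\abs{\mu}(B_r(x))/r^{d-1}<\infty$ then yields $\abs{\mu}\ll\mathcal{H}^{d-1}$ by the standard density theorem.
\end{itemize}
That route gives a pointwise upper density estimate as a byproduct. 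Yours needs no differentiation theory and handles the sign of $\mu$ directly through the Hahn decomposition and inner regularity.
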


The notion of distributional normal trace is usually too weak to deal with non-linear problems, since it is not stable under composition. To overcome this problem, we recall from \cite{CDIN24}*{Section 2.2} a stronger notion of normal trace, which is inspired by the theory of $BV$ functions. See also  \cites{DRINV23,CDIN24,AFP00}. 

\begin{definition}
Let $\Omega \subset \R^d$ be an open set,  let $v \in L^\infty(\Omega ;\R^m)$ and let $\Sigma \subset \Omega$ be a Lipschitz hypersurface oriented by  $n_{\Sigma}$. We say that $v$ has bilateral traces at $\Sigma$ if $\mathcal{H}^{d-1}(\Sigma \cap \mathcal{B}_v^c) = 0 $, that is the triplet $(v^+, v^-, n)$ is well-defined $\mathcal{H}^{d-1}$-a.e. at $\Sigma$. Since $\J_v$ is $(d-1)$-rectifiable, we have $n = n_\Sigma$ at $\mathcal{H}^{d-1}$-a.e. point in $\Sigma \cap \J_v$ (up to changing $n$ with $-n$ and $v^+$ with $v^-$). We say that $v^+, v^-$ are the one-sided traces of $v$ in $\Sigma$. 
\end{definition}

By \cite{CDIN24}*{Theorem 1.4}, the normal component of the one-sided Lebesgue trace agrees with the distributional normal one, whenever both exist\footnote{In \cite{CDIN24}, \cref{t: distributional trace vs lebesgue trace} is shown for vector fields possessing a strong \emph{normal} trace at $\Sigma$, rather than the full ones.}.  

\begin{proposition} \label{t: distributional trace vs lebesgue trace}
Given an open set $\Omega \subset \R^d$, let $V \in \mathcal{MD}^\infty(\Omega)$ and let $\Sigma \subset \Omega$ be a Lipschitz hypersurface oriented by $n_{\Sigma}$. Assume that $V$ has one-sided traces at $\Sigma$. Then, it holds 
$$ \tr_{n}(V, \Sigma_{\pm}) = V^{\pm}\cdot n_\Sigma.$$
\end{proposition}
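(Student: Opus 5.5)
The plan is a Gauss--Green argument with a cut-off adapted to $\Sigma$, followed by a blow-up/averaging argument that turns the pointwise Lebesgue traces into convergence of layer averages. By symmetry ($\Sigma_+$ is $\Sigma_-$ with $n_\Sigma$ replaced by $-n_\Sigma$), it suffices to prove $\tr_n(V,\Sigma_-)=V^-\cdot n_\Sigma$ $\mathcal H^{d-1}$-a.e. on $\Sigma$. Both sides are local, so we may work in a cylinder where $\Sigma=\{x_d=\gamma(x')\}$ with $\gamma$ Lipschitz. We take $\Omega_1=\{x_d<\gamma(x')\}$, so that $n_{\partial\Omega_1}=n_\Sigma=(-\nabla\gamma,1)/\sqrt{1+|\nabla\gamma|^2}$, and test against $\varphi\in C^\infty_c$ supported in the cylinder.

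\textbf{Step 1 (representation of the trace).} Fix a profile $\chi\in C^\infty(\R)$ with $\chi=1$ on $(-\infty,-1]$ and $\chi=0$ on $[0,\infty)$, and set $h_\varepsilon(x):=\chi\big((x_d-\gamma(x'))/\varepsilon\big)$. Then $h_\varepsilon$ is Lipschitz, supported in $\overline{\Omega_1}$, and $\varphi h_\varepsilon$ is an admissible test function (after a routine mollification). The definition of the distributional divergence gives
$$0=\int \nabla(\varphi h_\varepsilon)\cdot V\,dx+\int\varphi h_\varepsilon\,d(\div V).$$
As $\varepsilon\to0$, $h_\varepsilon\to\mathbb 1_{\Omega_1}$ pointwise and boundedly. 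By dominated convergence, and since the limit set is exactly $\Omega_1$ (so no mass of $|\div V|$ on $\Sigma$ is picked up), the terms with $h_\varepsilon\nabla\varphi$ and $h_\varepsilon\, d\div V$ converge to the right-hand side of \eqref{eq: normal distributional trace}. Therefore
$$\langle \tr_n(V,\Sigma_-),\varphi\rangle=\lim_{\varepsilon\to0}\frac{-1}{\varepsilon}\int \varphi(x)\,\chi'\Big(\frac{x_d-\gamma(x')}{\varepsilon}\Big)\,V(x)\cdot(-\nabla\gamma(x'),1)\,dx .$$
Writing $x=(x',\gamma(x')+\varepsilon s)$ with $s\in(-1,0)$, the candidate limit is $\int\varphi\,V^-\cdot(-\nabla\gamma,1)\,dx'=\int_\Sigma\varphi\,V^-\cdot n_\Sigma\,d\mathcal H^{d-1}$, because $\int_{-1}^0-\chi'(s)\,ds=1$. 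So the proof reduces to the layer estimate
$$\frac1\varepsilon\int_{L_\varepsilon}\big|V(x)-V^-(\pi(x))\big|\,dx\longrightarrow0,$$
where $L_\varepsilon=\{-\varepsilon<x_d-\gamma(x')<0\}$ and $\pi(x)=(x',\gamma(x'))$. The factors $\varphi$, $\chi'$ and $\nabla\gamma$ are bounded, so they do not affect this.

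\textbf{Step 2 (from blow-ups to layer averages; main obstacle).} The hypothesis only gives pointwise blow-up information at $\mathcal H^{d-1}$-a.e.\ $y\in\Sigma$, namely \eqref{eq: blow up vector} on the half ball $B_1^{n^-}$. We must integrate this along $\Sigma$, which we plan to do by an averaging (Fubini) argument.
\begin{itemize}
\item First, since $\Sigma$ is a Lipschitz graph, $\mathcal H^{d-1}(\Sigma\cap B_{C\varepsilon}(\pi(x)))\simeq\varepsilon^{d-1}$ uniformly. Hence, for nonnegative $F$,
$$\frac1\varepsilon\int_{L_\varepsilon}F\,dx\lesssim\int_\Sigma \varepsilon^{-d}\int_{B_{C\varepsilon}(y)\cap L_\varepsilon}F\,dx\,d\mathcal H^{d-1}(y).$$
\item Next, apply this with $F(x)=|V(x)-V^-(\pi(x))|$. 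Inside each ball, split
$$|V(x)-V^-(\pi x)|\le |V(x)-V^-(y)|+|V^-(y)-V^-(\pi x)|.$$
\item The first term gives $g_\varepsilon(y):=\varepsilon^{-d}\int_{B_{C\varepsilon}(y)\cap\Omega_1}|V-V^-(y)|$. This is bounded by $C\norm{V}_\infty$ and tends to $0$ for $\mathcal H^{d-1}$-a.e.\ $y$. Here we use that $B_{C\varepsilon}(y)\cap\Omega_1$ lies, up to a set of measure $o(\varepsilon^d)$, in the half ball on the $-n_\Sigma$ side, and that $n=n_\Sigma$ a.e.\ on $\Sigma\cap\J_V$ by the definition of bilateral traces. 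Dominated convergence on the finite measure $\mathcal H^{d-1}\llcorner(\Sigma\cap\operatorname{supp}\varphi)$ then yields $\int_\Sigma g_\varepsilon\to0$.
\item For the second term, approximate $V^-\in L^\infty(\Sigma)$ in $L^1(\Sigma)$ by a continuous $w$ (Lusin), with $\|V^--w\|_{L^1(\Sigma)}<\delta$. The continuous part contributes $o(1)$ by uniform continuity. The remainder is controlled, by the same Fubini comparison in reverse, by $C\|V^--w\|_{L^1(\Sigma)}\le C\delta$, using that $\pi$ has bounded Jacobian distortion on $L_\varepsilon$. Letting $\varepsilon\to0$ and then $\delta\to0$ gives the layer estimate.
\end{itemize}

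\textbf{Step 3 (conclusion).} Combining Steps 1 and 2 yields $\langle\tr_n(V,\Sigma_-),\varphi\rangle=\int_\Sigma\varphi\,V^-\cdot n_\Sigma\,d\mathcal H^{d-1}$ for all admissible $\varphi$. Since the trace is represented by an $L^\infty(\Sigma)$ function \cite{ACM05}, this proves the claim. The main technical point is Step 2, where the a.e.\ blow-up convergence must be made uniform enough to integrate over $\Sigma$; the Lusin approximation of $V^-$ is where care is needed.
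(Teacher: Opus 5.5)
Your argument is correct, but it does not follow the paper. The paper gives no proof of this proposition. It quotes Theorem 1.4 of \cite{CDIN24}, and a footnote points out that the identity is proved there under the weaker assumption that only the normal component of $V$ has a strong (Lebesgue) trace at $\Sigma$.

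You instead prove the stated case directly, and the steps check out:
\begin{enumerate}
\item Testing with the Lipschitz cut-off $\varphi h_\varepsilon$ is legitimate. Since $\chi(0)=0$, $h_\varepsilon$ vanishes on $\Sigma$, so $h_\varepsilon\to\mathbb{1}_{\Omega_1}$ everywhere and no mass of $\div V$ on $\Sigma$ is picked up. This correctly represents $\tr_n(V,\Sigma_-)$ as the limit of boundary-layer averages of $V\cdot(-\nabla\gamma,1)$, with the right sign.
\item The Fubini comparison between layer averages and ball averages uses the two-sided density bounds $\mathcal{H}^{d-1}(\Sigma\cap B_r)\simeq r^{d-1}$ of a Lipschitz graph.
\item Rademacher's theorem gives that $B_{C\varepsilon}(y)\cap\Omega_1$ differs from the half-ball on the $-n_\Sigma(y)$ side by $o(\varepsilon^d)$. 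Together with dominated convergence on $\mathcal{H}^{d-1}\llcorner\Sigma$, this turns the a.e.\ half-ball blow-ups \eqref{eq: blow up vector} into the required $L^1$ layer estimate.
\item The $L^1(\Sigma)$ approximation of $V^-$ by continuous functions correctly handles the oscillation of $V^-$ along $\Sigma$ at scale $\varepsilon$.
\end{enumerate}

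The citation is shorter and covers a more general hypothesis. Your route is elementary and self-contained, and it shows exactly what is used: the blow-up on one side plus the a.e.\ flatness of $\Sigma$. Only $V\cdot(-\nabla\gamma,1)$ enters the layer term, so your argument should also adapt to the normal-trace-only version, at the cost of an extra $L^1$ approximation of $\nabla\gamma$ to compare $\nabla\gamma(x')$ with $\nabla\gamma(y')$.
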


\subsection{On the convergence of convolutions}

We work with radial convolution kernels, namely
\begin{equation} \label{eq: kernels}
\mathcal{K}_{\rm rad} := \left\{ \eta \in C^\infty_c(B_1; [0,1]) \, : \, \int_{B_1} \eta(z) \, dz =1,  \eta \text{ radial} \right\}. 
\end{equation}
We study the pointwise convergence of the convolutions at continuity and jump points. We recall the following lemma. To give a self contained presentation, we include the short proof.  

\begin{lemma}  [\cite{Inv25}*{Lemma 4.3}] \label{l: averaged convergence}
Let $g\in L^\infty(\R^d; \R^N)$ and fix a kernel $\eta \in \mathcal{K}_{\rm rad}$. Set $\R^d_{\pm} : = \{ \pm z \cdot e_d \geq 0 \}$. Fix $x_0 \in \mathcal{B}_g$ such that \eqref{eq: blow up vector} holds. Then we have 
\begin{equation} \label{eq: shifted blow up}
    \lim_{\ell \to 0} \int_{B_1} g( x_0 + \ell (y-z) ) \, \eta(z)\, dz = \left( \int_{B_1(y) \cap \R^d_+ } \eta(y-z) \, dz \right) g^+ + \left( \int_{B_1(y) \cap \R^d_- } \eta(y - z) \, dz \right) g^- \qquad \forall y \in B_1.
\end{equation}
\end{lemma}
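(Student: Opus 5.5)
The plan is to change variables so that the blow up statement \eqref{eq: blow up vector} applies directly, and then pass to the limit by dominated convergence. Fix $y \in B_1$. Since $\eta$ is supported in $B_1$, I will substitute $w = y - z$, so that the integral becomes
\begin{equation*}
\int_{B_1(y)} g(x_0 + \ell w)\, \eta(y-w)\, dw ,
\end{equation*}
with $B_1(y) \subset B_2$. By rotation invariance (and since $\eta$ is radial), I may assume that the normal at $x_0$ is $n = e_d$. The half balls $B^{n\pm}$ then become $B\cap \R^d_\pm$.

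The key step is the $L^1$ convergence on the larger ball $B_2$:
\begin{equation*}
\int_{B_2 \cap \R^d_\pm} \abs{g(x_0+\ell w) - g^\pm}\, dw \to 0 .
\end{equation*}
This follows from \eqref{eq: blow up vector} by rescaling: the substitution $w = 2w'$ turns the left-hand side into $2^d$ times the same integral over $B_1\cap\R^d_\pm$ at scale $2\ell$. Since the limit in \eqref{eq: blow up vector} is taken along the full family $\ell\to0$, replacing $\ell$ by $2\ell$ is harmless.

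Next, $\eta(y-\cdot)$ is bounded by $1$ and supported in $B_1(y)\subset B_2$. This gives
\begin{equation*}
\Big| \int_{B_1(y)} g(x_0+\ell w)\,\eta(y-w)\,dw - \sum_\pm g^\pm \int_{B_1(y)\cap\R^d_\pm}\eta(y-w)\,dw \Big| \le \sum_\pm \int_{B_2\cap \R^d_\pm} \abs{g(x_0+\ell w)-g^\pm}\,dw \to 0 .
\end{equation*}
The hyperplane $\{w_d=0\}$ is negligible, so the splitting into the two half spaces is legitimate. Rewriting $\eta(y-w)$ in the variable $z$ as in the statement yields \eqref{eq: shifted blow up}.

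The only delicate point is the enlargement from $B_1$ to $B_2$ (equivalently, the shift by $y$), which is exactly where the full-family limit is needed. For Lebesgue points the two one-sided values coincide, and the same argument applies verbatim.
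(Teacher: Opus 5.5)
Your proof is correct and follows the same route as the paper's: reduce to $n=e_d$, substitute $w=y-z$, split $B_1(y)$ along $\{w_d=0\}$, and bound each half by $\int \abs{g(x_0+\ell w)-g^\pm}\,dw$ using $0\le\eta\le 1$. You are actually more careful than the paper, which applies the blow-up property directly even though $B_1(y)\not\subset B_1$; your rescaling from $B_2$ to $B_1$ at scale $2\ell$ is exactly the step that justifies this.
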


\begin{proof}
Without loss of generality, we assume $x_0 =0$ and $n = e_d$\footnote{Since $\eta$ is radial, the right hand side in \eqref{eq: shifted blow up} is also invariant under rotation.}. Then we have 
\begin{equation}
    \int_{B_1} g(\ell (y-z)) \, \eta(z) \, dz = \left( \int_{B_1(y) \cap \R^d_+} + \int_{B_1(y) \cap \R^d_-}\right) g(\ell z) \, \eta(y-z)\, dz.  
\end{equation}
We compute both terms separately. We have 
\begin{align}
    \abs{ \int_{B_1(y) \cap \R^d_+} g(\ell z) \, \eta(y-z) \, dz - \left( \int_{B_1(y) \cap \R^d_+ } \eta(y-z) \, dz \right) g^+  } & \leq \int_{B_1(y)\cap \R^d_+} \abs{g(\ell z) - g^+} \abs{\eta(y-z)} \, dz . 
\end{align}
By \eqref{eq: blow up of composition}, the latter vanishes in the limit $\ell \to 0$. Similarly, we show that 
$$\lim_{\ell \to 0} \int_{B_1(y) \cap \R^d_-} g(\ell z) \, \eta(y-z) \, dz = \left( \int_{B_1(y) \cap \R^d_- } \eta(y-z) \, dz \right) g^-. $$
\end{proof}

A particular case of the following proposition has been considered in \cite{Inv25}*{Proposition 4.4} with $F: \R^d \to \R^{d\times d}$ defined by $F(b) = b\otimes b$, with $b \in \R^d$. Here, we consider an arbitrary continuous function $F$. 

\begin{proposition} [Double mollification] \label{P: double mollification}
Let $F: \R^N \to \R^M$ be a continuous function, let $g: \R^d \to \R^N$ be a bounded Borel function, and fix a kernel $\eta \in \mathcal{K}_{\rm rad}$. Fix $x_0 \in \mathcal{B}_g$ such that \eqref{eq: blow up vector} is satisfied. Then we have 
\begin{equation} \label{eq: double mollification formula}
    \lim_{\ell \to 0} F(g*\eta_\ell) * \eta_{\ell} (x_0) = \int_0^1 F(s g^+(x_0) + (1-s) g^-(x_0)) \, ds.   
\end{equation}
\end{proposition}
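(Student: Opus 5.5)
Without loss of generality we take $x_0=0$ and $n=e_d$ (the kernel is radial, so after a rotation the right-hand side of \eqref{eq: double mollification formula} is unchanged). Changing variables $x=\ell y$, we write
\begin{equation}
F(g*\eta_\ell)*\eta_\ell(0) = \int_{B_1} F\big((g*\eta_\ell)(-\ell y)\big)\, \eta(y)\, dy = \int_{B_1} F\Big( \int_{B_1} g(\ell(-y-z))\, \eta(z)\, dz \Big) \eta(y)\, dy .
\end{equation}
For each fixed $y\in B_1$, \cref{l: averaged convergence} applied at the point $-y$ gives pointwise convergence of the inner integral to $\theta(-y)\, g^+ + (1-\theta(-y))\, g^-$. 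Here
\begin{equation}
\theta(w) := \int_{B_1(w)\cap \R^d_+} \eta(w-z)\, dz = \int_{\{z_d \le w_d\}} \eta(z)\, dz,
\end{equation}
and we used $\int\eta=1$ to identify the coefficient of $g^-$ as $1-\theta$.

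Since $g$ is bounded, the inner integrals take values in a fixed compact set. On that set $F$ is uniformly continuous and bounded, so the integrands $F(\cdot)\eta(y)$ are uniformly bounded and converge pointwise. By dominated convergence,
\begin{equation}
\lim_{\ell\to 0} F(g*\eta_\ell)*\eta_\ell(0) = \int_{B_1} F\big(\theta(-y) g^+ + (1-\theta(-y)) g^-\big)\, \eta(y)\, dy .
\end{equation}
Because $\eta$ is radial, the substitution $y\mapsto -y$ replaces $\theta(-y)$ with $\theta(y)$.

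It remains to identify this integral with $\int_0^1 F(sg^++(1-s)g^-)\,ds$. The quantity $\theta(y)$ depends only on $y_d$: writing $\theta(y)=H(y_d)$, we have $H(t)=\int_{-\infty}^t h(\tau)\,d\tau$, where $h(\tau)=\int_{\R^{d-1}}\eta(z',\tau)\,dz'$ is the marginal density of $\eta$ in the $e_d$ direction. Integrating out $y'$ gives
\begin{equation}
\int_{B_1} F(\ldots)\,\eta(y)\,dy = \int_{\R} F\big(H(t)g^+ + (1-H(t))g^-\big)\, h(t)\, dt .
\end{equation}
Since $H'=h$ and $H$ increases from $0$ to $1$, the change of variables $s=H(t)$ turns this into exactly $\int_0^1 F(sg^++(1-s)g^-)\,ds$. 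This is the probability-integral-transform step: the pushforward of $h\,dt$ under $H$ is Lebesgue measure on $[0,1]$. Note that $H$ need not be strictly increasing where $h=0$, but the change of variables still holds for the monotone absolutely continuous $H$.

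The main point requiring care is this final identification, which explains why the answer does not depend on $\eta$. A secondary point is justifying pointwise convergence for every $y$ from \cref{l: averaged convergence}; this needs \eqref{eq: blow up vector} on the half balls, which is assumed at $x_0$. In the Lebesgue-point case $g^+=g^-$, the formula reduces trivially to $F(g(x_0))$.
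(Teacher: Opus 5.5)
Your proof is correct and follows the paper's argument essentially step for step: rescale, apply \cref{l: averaged convergence} for each fixed $y$, pass to the limit by dominated convergence, and finish with Fubini and the substitution $s=\alpha(y_d)$ (your $H(t)$). You are slightly more careful than the paper in two places: you track the reflection $y\mapsto -y$ explicitly through the radial symmetry of $\eta$, and you note that the substitution does not need $\alpha$ to be strictly increasing (the paper asserts strict monotonicity, which fails near $y_d=\pm 1$ where the marginal of $\eta$ vanishes). One small slip: the reduction to $n=e_d$ uses rotation invariance of the left-hand side of \eqref{eq: double mollification formula}, not the right-hand side, which does not depend on $n$ at all.
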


\begin{proof}
Without loss of generality, we assume $x_0 = 0$ and denote by $g^{\pm} := g^{\pm}(0)$. Then we have 
\begin{align}
    F(g* \eta_\ell) * \eta_\ell (0) & = \int_{B_1} F \left( \int_{B_1} g(\ell (y-z) ) \, \eta(z) \, dz \right) \eta(y) \, dy 
\end{align}
By \cref{l: averaged convergence}, we have 
\begin{equation}
    \lim_{\ell \to 0} \int_{B_1} g(\ell (y-z) ) \, \eta(z) \, dz = \alpha(y) g^+ + (1-\alpha(y)) g^-, 
\end{equation}
where we set 
\begin{equation}
    \alpha(y) := \int_{B_1 \cap \{ z_d\leq y_d\}} \eta(z) \, dz = \int_{-1}^{y_d} \left( \int_{B_1 \cap \{ z_d = s \} } \eta(z) \, d \mathcal{H}^{d-1}(z) \right) \, ds.  
\end{equation}
The last identity follows by Fubini's theorem. The function $y \mapsto \alpha (y)$ depends only on the last coordinate $y_d$. Moreover, it is strictly increasing with respect to $y_d$ and satisfies 
\begin{equation}
    \alpha(-1) = 0, \qquad \alpha(1) =1, \qquad \alpha'(y_d) = \int_{B_1\cap \{ z_d= y_d \} } \eta(z) \, dz.  
\end{equation}
Therefore, by the dominated convergence theorem, we write 
\begin{align}
    \lim_{\ell \to 0} F(g* \eta_\ell) * \eta_\ell (0) & = \int_{B_1} F \left(  \alpha(y_d) g^+ +  (1-\alpha(y_d)) g^-  \right) \eta(y) \, dy 
    \\ & = \int_{-1}^1 F \left(  \alpha(y_d) g^+ +  (1-\alpha(y_d)) g^-  \right) \left( \int_{ B_1 \cap  \{z_d =y_d \} } \eta(z) \, d \mathcal{H}^{d-1} (z) \right) \, d y_d 
    \\ & = \int_{0}^1 F( s g^+ + (1-s) g^- )\, dt, 
\end{align}
after the change of variable $s = \alpha(y_d)$. 
\end{proof}

\begin{proposition} \label{P: mollification of function and measure}
Let $F: \R^N \to \R^M$ be a continuous function and let $g: \R^d \to \R^N$ be a bounded Borel function. Let $\lambda$ be a Radon measure on $\R^d$ and assume that the  complement of the set of points where \eqref{eq: blow up vector} holds is $\abs{\lambda}$-negligible. Fix a kernel $\eta \in \mathcal{K}_{\rm rad}$ and set $g_\ell = g* \eta_\ell, \lambda_\ell = \lambda * \eta_\ell$. Then we have 
\begin{equation}
    \lim_{\ell \to 0} F( g_\ell ) \, \lambda_\ell = \left( \int_0^1 F (t g^+ + (1-t) g^-) \, dt \right) \lambda \qquad \text{ weakly in the sense of measures.}
\end{equation}
\end{proposition}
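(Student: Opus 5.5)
We test against $\varphi \in C_c(\R^d;\R^M)$ and study
\[
\int \varphi \cdot F(g_\ell)\, d\lambda_\ell = \int (\varphi \cdot F(g_\ell))*\eta_\ell \, d\lambda,
\]
which is an identity by Fubini, since $\eta$ is radial and hence even. The integrand on the right is bounded uniformly in $\ell$ by $\norm{\varphi}_\infty \sup_{|b|\le \norm{g}_\infty}|F(b)|$. It is also supported in a fixed compact neighbourhood of $\operatorname{supp}\varphi$, and $|\lambda|$ is finite there. By dominated convergence, it therefore suffices to show that
\[
(\varphi\cdot F(g_\ell))*\eta_\ell(x_0)\to \varphi(x_0)\cdot\int_0^1 F(tg^+(x_0)+(1-t)g^-(x_0))\,dt
\]
for $|\lambda|$-a.e. $x_0$. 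By hypothesis, this means at every point where \eqref{eq: blow up vector} holds.

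Fix such an $x_0$. First remove $\varphi$. The difference between $(\varphi\cdot F(g_\ell))*\eta_\ell(x_0)$ and $\varphi(x_0)\cdot (F(g_\ell)*\eta_\ell)(x_0)$ is bounded by $\omega_\varphi(\ell)\sup|F|$ on the range. Here $\omega_\varphi$ is the modulus of continuity of $\varphi$, so this difference tends to $0$. We are left with $F(g_\ell)*\eta_\ell(x_0) = F(g*\eta_\ell)*\eta_\ell(x_0)$. This is exactly the object treated in \cref{P: double mollification}, which gives the limit $\int_0^1 F(sg^+(x_0)+(1-s)g^-(x_0))\,ds$ at every $x_0$ satisfying \eqref{eq: blow up vector}.

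One technical point needs a check. \cref{P: double mollification} needs $g$ to be bounded Borel, which holds, and the pointwise statement at $x_0$ uses only \eqref{eq: blow up vector} at $x_0$, which is what is assumed $|\lambda|$-a.e. The function $x_0\mapsto \int_0^1F(tg^++(1-t)g^-)\,dt$ must also be $|\lambda|$-measurable so that the limit measure makes sense. This follows because $g^\pm$ are Borel, as noted in \cref{R: blow up of composition}. Alternatively, it is a pointwise limit of the continuous functions $F(g_\ell)*\eta_\ell$ along any sequence $\ell_k\to0$, hence Borel on the set where the limit exists.

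The main obstacle is the correct handling of the test function. We need the convergence for compactly supported continuous test functions, and the ambiguity in the choice of $(g^+,g^-,n)$ must be harmless. The latter is immediate because the integral $\int_0^1F(tg^++(1-t)g^-)\,dt$ is symmetric under swapping $g^+$ and $g^-$. Equally, at Lebesgue points we have $g^+=g^-$, and the formula reduces to $F(g(x_0))$.
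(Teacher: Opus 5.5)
Your proposal is correct and follows essentially the same route as the paper. You move the mollifier onto the test function, isolate the commutator $(\varphi\cdot F(g_\ell))*\eta_\ell-\varphi\cdot(F(g_\ell)*\eta_\ell)$ bounded by $\omega_\varphi(\ell)$, and conclude with \cref{P: double mollification} and dominated convergence. The only cosmetic difference is that you absorb the commutator pointwise into a single dominated convergence argument, whereas the paper bounds it globally by $\abs{\lambda}(B_R)\,\omega_\phi(\ell)$.
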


\begin{proof}
Let $\phi \in C_c(\R^d)$ be a test function. By the properties of convolution, we have 
\begin{align}
    \langle\phi, F (g_\ell) \, \lambda_\ell \rangle & = \langle ( F(g_\ell ) \, \phi) * \eta_\ell , \lambda \rangle 
    \\ & = \langle \phi \, (F(g_\ell) * \eta_\ell) , \lambda \rangle + \langle ( \phi \, F(g_\ell) )* \eta_\ell - \phi \, ( F(g_\ell)*\eta_\ell) , \lambda  \rangle = I_\ell+II_\ell.    
\end{align}
Since \eqref{eq: blow up vector} holds $\abs{\lambda}$-a.e., by \cref{P: double mollification} and the dominated convergence theorem, we have 
\begin{equation}
    \lim_{\ell \to 0} I_\ell = \int_{\R^d} \left( \int_0^1 F(s g^+(x) + (1-s)g^-(x)) \, ds \right) \phi(x) \, d\lambda. 
\end{equation}
Regarding the second term, we write explicitly the convolution 
\begin{align}
    \sup_{x \in \R^d} \abs{ ( \phi \, F(g_\ell)  )* \eta_\ell - \phi \, (F(g_\ell)*\eta_\ell)  } & \leq \sup_{x\in \R^d} \int_{B_1} \abs{ \phi( x-\ell y ) - \phi(x) } \abs{ F (g_\ell (x-\ell y) ) \eta(y) } \, dy \lesssim \omega_\phi(\ell), 
\end{align}
where the implicit constant depends only on the supremum norm of $F$ in the range of $g$ and $\omega_\phi$ is the modulus of continuity of $\phi$. Taking $R>1$ such that $\phi$ has compact support in $B_R$, we have 
\begin{equation}
    \lim_{\ell \to 0} II_\ell \lesssim \abs{\lambda}(B_R) \lim_{\ell \to 0} \omega_\phi (\ell)  =0.   
\end{equation}
\end{proof}

\subsection{Functions of bounded variation} \label{ss: BV} 
We list some basic properties of $BV$ functions that will be needed throughout the manuscript. See the monograph \cite{AFP00}. Given an open set $\Omega \subset \R^d$, we say that $g$ has \emph{bounded variation in $\Omega$} ($g \in BV(\Omega)$) if $g \in L^1(\Omega)$ and the distributional gradient $\nabla g$ is represented by a Radon measure on $\Omega$. In the time dependent case, we say that $g \in L^1_t BV_x$ if $g \in L^1_{t,x}$, $g_t \in BV_x$ and $\norm{\nabla g_t}_{\mathcal{M}_x} \in L^1_t$. Functions with bounded variation are characterised by the fact that the difference quotients are uniformly bounded in $L^1$.  More precisely, given $g \in L^1(\R^d)$ we have\footnote{A similar property holds on general open sets.  \label{fn: local characterization}} 
\begin{equation} \label{eq: char of BV}
g \in BV(\R^d) \qquad \Longleftrightarrow \qquad \sup_{h \in \R^d \setminus \{0\} } \frac{\norm{g(\cdot + h) - g(\cdot)}_{L^1(\R^d)}}{\abs{h}} < \infty. 
\end{equation}
Unlike the $W^{1,1}$ case, the jump set of a $BV$ function may have a positive $\mathcal{H}^{d-1}$ measure. See also \cite{DRINV23} for further details. The following is part of the structure theorem for the gradient of $BV$ functions. 

\begin{proposition} [\cite{AFP00}*{Lemma 3.76, Theorem 3.78}] \label{P: prop BV}
Let $g \in BV(\R^d; \R^N)$. The following facts hold: 
\begin{enumerate}
    \item $\abs{\nabla g} \ll \mathcal{H}^{d-1}$; 
    \item the set $\mathcal{B}_g^c$ is $\mathcal{H}^{d-1}$-negligible; 
    \item for any $\mathcal{H}^{d-1}$-countably rectifiable set $\Sigma$, the following holds at $\mathcal{H}^{d-1}$-a.e. point in $\Sigma$: the triplet $(g^+, g^-, n)$ is well defined, \eqref{eq: blow up vector} holds and $n$ is a unit normal vector to $\Sigma$. Moreover, the restriction of the gradient to $\Sigma$ satisfies 
    \begin{equation}
        (\nabla g) \llcorner \Sigma =  (g^+ - g^-) \otimes n  \,    \mathcal{H}^{d-1} \llcorner \Sigma.  
    \end{equation}   
\end{enumerate} 
\end{proposition}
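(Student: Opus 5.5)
We follow the classical route of \cite{AFP00}: reduce to the scalar case and use the coarea formula together with De Giorgi's structure theorem for sets of finite perimeter. Throughout we argue componentwise. The objects in (1)–(3) are stable under taking components: $\abs{\nabla g} \le \sum_i \abs{\nabla g_i}$, $\mathcal{B}_g = \bigcap_i \mathcal{B}_{g_i}$, and the normals of the $g_i$ agree $\mathcal{H}^{d-1}$-a.e. on common jump points, as recalled in \cref{ss:jumps}. So it suffices to treat $N=1$. For scalar $g \in BV$ we use the coarea formula
$$\abs{\nabla g}(B) = \int_{-\infty}^{+\infty} \mathcal{H}^{d-1}(B \cap \partial^* E_t)\, dt, \qquad E_t := \{g > t\},$$
where $\partial^* E_t$ is the reduced boundary. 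For a.e. $t$ the set $E_t$ has finite perimeter, and De Giorgi's theorem gives $\abs{\nabla \mathds{1}_{E_t}} = \mathcal{H}^{d-1}\llcorner \partial^* E_t$. Item (1) follows at once: if $\mathcal{H}^{d-1}(B)=0$, every integrand vanishes.

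For item (2), we introduce the approximate upper and lower limits $g^\vee, g^\wedge$. Then $\S_g = \{g^\wedge < g^\vee\}$. For $t \in (g^\wedge(x), g^\vee(x))$, the point $x$ has neither density $0$ nor density $1$ for $E_t$, so it lies in the essential boundary of $E_t$. By Federer's theorem, the essential boundary agrees with $\partial^* E_t$ up to an $\mathcal{H}^{d-1}$-null set. A Fubini/coarea argument over a countable dense set of levels $t$ then shows the following. For $\mathcal{H}^{d-1}$-a.e. $x \in \S_g$, the point $x$ belongs to $\partial^* E_t$ for all rational $t \in (g^\wedge(x), g^\vee(x))$, and the blow-ups of $E_t$ at $x$ are half-spaces. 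These half-spaces are nested in $t$, so they must share the same normal $n(x)$. Consequently the blow-up of $g$ at $x$ equals $g^\vee$ on one half-ball and $g^\wedge$ on the other. This gives a jump point with $g^\pm = g^\vee, g^\wedge$, and hence $\mathcal{H}^{d-1}(\S_g \setminus \J_g) = 0$ (Federer--Vol'pert). The same argument shows that $\J_g$ is countably $\mathcal{H}^{d-1}$-rectifiable, since it is covered by countably many reduced boundaries. We expect this step to be the main obstacle: it requires both the nesting argument and the control of the exceptional null sets uniformly in $t$.

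For item (3), let $\Sigma$ be $\mathcal{H}^{d-1}$-rectifiable. By (2), the triplet $(g^+, g^-, n)$ is defined at $\mathcal{H}^{d-1}$-a.e. point of $\Sigma$. On $\Sigma \cap \J_g$, both $\Sigma$ and $\J_g$ are rectifiable, so at $\mathcal{H}^{d-1}$-a.e. common point their approximate tangent planes coincide. Hence $n$ is normal to $\Sigma$. To identify the restriction of the gradient, we split $\Sigma$ into the two pieces below.

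\emph{On $\Sigma \cap \J_g$.} Take $x$ where $\J_g$ has density one in the $(d-1)$-dimensional sense. We rescale $\nabla g$ at $x$. By the $L^1$ convergence of the blow-ups, the rescaled measures converge to the gradient of the blow-up $v$, which is $(g^+-g^-)\otimes n\, \mathcal{H}^{d-1}\llcorner n^\perp$. Using (1) and Besicovitch differentiation with respect to $\mathcal{H}^{d-1}\llcorner \J_g$, this yields $\nabla g \llcorner \J_g = (g^+-g^-)\otimes n\, \mathcal{H}^{d-1}\llcorner \J_g$.

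\emph{On $\Sigma \setminus \J_g$.} This set consists, up to an $\mathcal{H}^{d-1}$-null set, of approximate continuity points. By the coarea formula, such points belong to $\partial^* E_t$ only for a null set of $t$. This gives $\abs{\nabla g}(\Sigma \setminus \J_g) = 0$, which matches the formula there because $g^+ = g^-$. Combining the two pieces gives the stated representation of $(\nabla g)\llcorner\Sigma$.
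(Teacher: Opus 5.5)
Your route (reduction to $N=1$, coarea formula, De Giorgi and Federer theorems for the superlevel sets $E_t$, nesting of the half-space blow-ups) is the classical one behind the result of \cite{AFP00}, which the paper cites without proof. It goes through for $g\in BV\cap L^\infty$, the only case the paper ever uses. For the statement as written, with arbitrary $g\in BV$, item (2) has a genuine gap. You mix the density-based objects $g^\vee$, $g^\wedge$, $E_t$ with the paper's $L^1$-based objects $\S_g$, $\J_g$ and \eqref{eq: blow up vector}, and these differ for unbounded $g$.

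In particular, the identity $\S_g=\{g^\wedge<g^\vee\}$ is false in general. For $d\geq 2$, let $g=\sum_{k\geq k_0}2^{kd/2}\mathds{1}_{B_{r_k}(2^{-k}e_1)}$ with $r_k=2^{-3k/2}$. Then $g\in BV(\R^d)$ and $\{g\neq 0\}$ has density zero at the origin, so $g^\wedge(0)=g^\vee(0)=0$. Yet $\fint_{B_{2^{1-k}}(0)}\abs{g}\,dx\geq 2^{-d}$, so $0\in\S_g$. Likewise, the half-space blow-ups of the $E_t$ only show that $g(x+r\,\cdot)$ converges \emph{in measure} on $B_1$ to $g^\vee\mathds{1}_{H}+g^\wedge\mathds{1}_{B_1\setminus H}$. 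That is not the $L^1$ convergence required by \eqref{eq: blow up vector}, and nothing in your argument excludes $g^\vee=+\infty$ or $g^\wedge=-\infty$. So, as written, you control neither $\S_g\setminus\{g^\wedge<g^\vee\}$ nor whether points of $\{g^\wedge<g^\vee\}$ are jump points in the paper's sense. Your reduction of $\Sigma\setminus\J_g$ to approximate continuity points in (3) inherits the same problem.

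The missing ingredient is equi-integrability of the rescalings at $\mathcal{H}^{d-1}$-a.e.\ point. Since $\limsup_{r\to0}r^{1-d}\abs{\nabla g}(B_r(x))<\infty$ for $\mathcal{H}^{d-1}$-a.e.\ $x$, at such points $g_r:=g(x+r\,\cdot)$ satisfies $\sup_r\abs{\nabla g_r}(B_1)<\infty$. By the Sobolev--Poincar\'e inequality, $g_r-\fint_{B_1}g_r$ is then equi-integrable on $B_1$. If $g_r$ converges in measure to a finite-valued limit, the averages stay bounded, and Vitali's theorem upgrades the convergence to $L^1(B_1)$. This gives both $\mathcal{H}^{d-1}(\S_g\setminus\{g^\wedge<g^\vee\})=0$ and the $L^1$ one-sided limits, once you also show that $g^\wedge$ and $g^\vee$ are finite $\mathcal{H}^{d-1}$-a.e. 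The set $\{g^\wedge<g^\vee=+\infty\}$ is null by your own coarea argument, since its points lie in $\partial^e E_t$ for every $t>g^\wedge(x)$. The set $\{g^\wedge=+\infty\}$ needs a separate estimate, e.g.\ $\mathcal{H}^{d-1}_\infty(\{g^\wedge=+\infty\})\lesssim\mathrm{Per}(E_{t_k})\to0$ along levels $t_k\to+\infty$, obtained from the relative isoperimetric inequality and a Vitali covering.

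The same density bound is needed in (3) to pass from $\nabla g_r\to\nabla v$ in distributions to convergence of $r^{1-d}\nabla g(B_r(x))$. Also, do the Besicovitch differentiation there against $\mathcal{H}^{d-1}$ restricted to rectifiable pieces of finite measure, since $\mathcal{H}^{d-1}\llcorner\J_g$ need not be locally finite. Alternatively, state the proposition for $g\in BV\cap L^\infty$. There, convergence in measure and in $L^1$ coincide, $g^\vee$ and $g^\wedge$ are finite, and your argument is complete up to these technicalities.
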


\subsection{Vector fields with bounded deformation} \label{ss: BD} 

We recall some fundamental properties of $BD$ vector fields. We refer to \cites{ACDalM97, TS80} for a detailed discussion. The basic theory is rather similar to that of $BV$ functions, with few exceptions. Given an open set $\Omega \subset \R^d$, we say that $g$ has \emph{bounded deformation in $\Omega$} if $g \in L^1(\Omega)$ and the symmetric distributional gradient $Eg$ is a Radon measure in $\Omega$. In the time dependent case, we say that $g \in L^1_t BD_x$ if $g \in L^1_{t,x}$, $g_t \in BD_x$ and $\norm{E g_t}_{\mathcal{M}_x} \in L^1_t$. Bounded deformation fields enjoy the following characterisation in terms of \emph{longitudinal} difference quotients. Given $g \in L^1(\R^d)$, we have\footref{fn: local characterization}  
\begin{equation} \label{eq: char of BD}
    g \in BD(\R^d) \qquad \Longleftrightarrow \qquad \sup_{h \in \R^d \setminus \{0\}} \frac{\norm{\langle h , g(\cdot + h) - g(\cdot)\rangle}_{L^1(\R^d)}}{\abs{h}^2} < \infty.  
\end{equation}
See e.g. \cite{DRInvN25}*{Lemma 2.11} for further details. The following is part of the structure theorem for the symmetric gradient of $BD$ fields and is analogous to \cref{P: prop BV} for $BV$ functions.  

\begin{proposition} \cite{ACDalM97}*{Theorem 6.1, Equation (4.2)} \label{P: prop BD}
Let $g \in BD(\Omega; \R^d)$. The following facts hold: 
\begin{enumerate}
    \item $\abs{Eg} \ll \mathcal{H}^{d-1}$; 
    \item for any vector field $f \in BD(\Omega; \R^d)$ the set $\mathcal{B}_g^c$ is $\abs{E f}$-negligible\footnote{To the best of the author's knowledge, it is currently not known whether $\mathcal{H}^{d-1}(\S_g \setminus \J_g) =0$, as in the $BV$ setting.}; 
    \item for any $\mathcal{H}^{d-1}$-countably rectifiable set $\Sigma$, the following holds at $\mathcal{H}^{d-1}$-a.e. point in $\Sigma$: the triplet $(g^+, g^-, n)$ is well defined, \eqref{eq: blow up vector} holds, and $n$ is a unit normal vector to $\Sigma$. Moreover, the restriction of the symmetric gradient to $\Sigma$ satisfies 
    \begin{equation} \label{eq: symmetric gradient jump} 
        (E g) \llcorner \Sigma = \frac{(g^+ - g^-) \otimes n + n \otimes (g^+ - g^-)}{2} \, \mathcal{H}^{d-1} \llcorner \Sigma.  
    \end{equation}
\end{enumerate} 
\end{proposition}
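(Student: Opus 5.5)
The plan is to read off the structure theorem for $BD$ fields, \cref{P: prop BD}, as stated; it is a cited result, so nothing needs to be re-derived. I will only say how each item follows from the references and how it fits the notation of \cref{ss:jumps}.

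\textbf{Item (1).} The absolute continuity $\abs{Eg}\ll\mathcal{H}^{d-1}$ is \cite{ACDalM97}*{Theorem 6.1}. It is obtained by slicing. For a fixed direction $\xi$, the one-dimensional restrictions $s\mapsto \langle g(y+s\xi),\xi\rangle$ are $BV$ on lines. The component $\langle Eg\,\xi,\xi\rangle$ is the integral over $y\in\xi^\perp$ of the derivatives of these slices. A set of zero $\mathcal{H}^{d-1}$ measure has $\mathcal{H}^{d-1}$-null projections onto $\xi^\perp$, so it meets a.e. line in a set of $\mathcal{H}^0$-measure zero. The one-dimensional derivatives do not charge points outside the countable jump set of each slice, and the set itself meets a.e. line in no points at all. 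Letting $\xi$ range over a countable dense set of directions controls the full measure $Eg$.

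\textbf{Item (2).} The claim is that points where $g$ fails to have an approximate limit or an approximate jump are $\abs{Ef}$-negligible. By item (1) applied to $f$, it suffices that $\mathcal{B}_g^c$ be $\mathcal{H}^{d-1}$-$\sigma$-finite up to a negligible set, and one can check that $\abs{Ef}$ does not charge it. The route I would follow is \cite{ACDalM97}*{Theorem 6.1 and Eq.\ (4.2)}. There, $\mathcal{H}^{d-1}$-a.e. point of $\S_g$ is shown either to lie in $\J_g$ or to form a set $\Theta_g\setminus\J_g$ with $\mathcal{H}^{d-1}(\Theta_g\setminus \J_g)=0$, where $\Theta_g$ is the set of points of positive upper $(d-1)$-density of $\abs{Eg}$. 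Away from $\Theta_g$, the densities $\abs{Eg}(B_r(x))/r^{d-1}$ tend to $0$. A Korn--Poincaré inequality in $BD$ on balls then gives $\fint_{B_r(x)}\abs{g-a_r}\to0$ for suitable rigid motions $a_r$, which yields an approximate limit. The main obstacle is exactly that, unlike the $BV$ case, it is unknown whether $\mathcal{H}^{d-1}(\S_g\setminus\J_g)=0$. This is why the statement is phrased only with respect to $\abs{Ef}$, which is concentrated on $\Theta_f$ plus its diffuse part. I would simply import this from the reference rather than reprove it.

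\textbf{Item (3).} On a countably $\mathcal{H}^{d-1}$-rectifiable $\Sigma$, the first step is to cover $\Sigma$ by $C^1$ graphs. At $\mathcal{H}^{d-1}$-a.e. point, $g$ has one-sided traces on each graph, by the trace theorem for $BD$ on Lipschitz domains. The blow-up is then the two-valued function, giving \eqref{eq: blow up vector}, with $n$ the normal to the graph. Gauss--Green applied to $E g$ on the two sides, with the symmetric trace formula, identifies $(Eg)\llcorner\Sigma$ with $(g^+-g^-)\odot n\,\mathcal{H}^{d-1}\llcorner\Sigma$, which is \eqref{eq: symmetric gradient jump}. This mirrors \cref{P: prop BV}(3) and is taken directly from \cite{ACDalM97}*{Theorem 6.1}.
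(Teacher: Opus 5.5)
Like the paper, which gives no proof beyond the citation, you ultimately import the result, and your remarks on items (1) and (3) are sound. Your account of item (2), however, rests on a step that fails.

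At a point $x\notin\Theta_g$ the Korn--Poincar\'e inequality only gives $\fint_{B_r(x)}\abs{g-a_r}\to0$ for rigid motions $a_r$ that depend on $r$. Two consecutive ones differ, in average on $B_{r/2}(x)$, by at most $C\abs{Eg}(B_r(x))/r^{d-1}$. This tends to $0$ but need not be summable over dyadic scales, so the $a_r$ need not converge and no approximate limit follows. Already $g=\sin(\log\log(1/\abs{x}))\,e_1$ near the origin in $d=2$ satisfies $\abs{Eg}(B_r(0))/r\le 2\pi/\log(1/r)\to0$, while $0$ is not even a blow-up point of $g$.

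In fact your argument proves too much. It would give $\S_g\subset\Theta_g$, hence $\mathcal{H}^{d-1}(\S_g\setminus\J_g)\le\mathcal{H}^{d-1}(\Theta_g\setminus\J_g)=0$, which is exactly the open problem recorded in the footnote you yourself cite. Your preliminary reduction is also invalid. Item (1) only makes $\mathcal{H}^{d-1}$-null sets $\abs{Ef}$-null, whereas $\abs{Ef}$ does charge $\mathcal{H}^{d-1}$-$\sigma$-finite sets such as $\J_f$.

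What the reference provides is the single-field statement $\abs{Eg}(\S_g\setminus\J_g)=0$, together with $\mathcal{H}^{d-1}(\Theta_g\setminus\J_g)=0$. Item (2) is a two-field statement, and it is used in that form in \cref{R: negligible set of BD BV}. There the set is $\mathcal{B}_g^c$ for the given $BD$ field, but the measure is $\abs{Eh}$ for the auxiliary field $h$ built from the $BV$ function. Your sentence about $\abs{Ef}$ being ``concentrated on $\Theta_f$ plus its diffuse part'' does not bridge this.

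One way to close the gap is as follows. Write $Ef=a\,\sigma$ and $Eg=b\,\sigma$ with $\sigma=\abs{Ef}+\abs{Eg}$. For each $x$ with $a(x)\neq0$, at most one $\lambda$ solves $b(x)+\lambda a(x)=0$, so by Fubini $\abs{Ef}\ll\abs{E(g+\lambda f)}$ for a.e. $\lambda\in\R$. Choose two such values $\lambda_1\neq\lambda_2$ and apply the single-field result to $h_i=g+\lambda_i f$: then $\abs{Ef}$-a.e. point lies in $\mathcal{B}_{h_1}\cap\mathcal{B}_{h_2}$. By rectifiability, the jump normals of $h_1,h_2$ agree $\mathcal{H}^{d-1}$-a.e. on $\J_{h_1}\cap\J_{h_2}$, and the exceptional set is $\abs{Ef}$-null by (1). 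Hence $g=(\lambda_2h_1-\lambda_1h_2)/(\lambda_2-\lambda_1)$ has, $\abs{Ef}$-a.e., a blow-up that is constant or a jump across a single hyperplane, i.e. $\abs{Ef}(\mathcal{B}_g^c)=0$.
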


\begin{remark} \label{R: negligible set of BD BV} 
As byproducts of \cref{P: prop BV}, \cref{P: prop BD} and \cref{L: product rule BV BD}, we obtain the following fact, which will be used repeatedly throughout the paper. Fix a vector field $g \in L^\infty\cap BD$ and a scalar function $f \in L^\infty \cap BV$. The set $\mathcal{B}_g$ is $\abs{\nabla f}$-negligible. Indeed, letting $h = (f, 0, \dots, 0) \in \R^d$, it is easily checked that $\abs{\nabla \rho} \ll \abs{E h}$ and, by \cref{P: prop BD}, we have $\abs{E h} (\mathcal{B}_g^c) = 0$. 
\end{remark}

We will need the following standard  property on the product of $BV$ functions with $BD$ fields. For the reader's convenience, we include a proof. 

\begin{lemma} \label{L: product rule BV BD}
Let $\rho \in L^\infty \cap BV (\R^d; \R)$ and $u \in L^\infty\cap BD(\R^d; \R^d)$. Then, $\rho u \in L^\infty \cap BD (\R^d; \R^d)$ and the product rule holds 
\begin{equation} \label{eq: chain rule BV BD}
    E(\rho u) = \frac{\rho^+ + \rho^-}{2} \, E u + \frac{(u^+ + u^-) \otimes \nabla \rho + \nabla \rho \otimes (u^+ + u^-) }{4}. 
\end{equation}
In particular, if $\div(u) =0$, we have
\begin{equation} \label{eq: chain rule div BV BD}
    \div(\rho u) = \frac{u^+ + u^-}{2} \cdot \nabla \rho
\end{equation}
\end{lemma}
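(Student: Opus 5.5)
We prove the product rule by mollification combined with the blow-up analysis of \cref{P: mollification of function and measure}. Set $\rho_\ell = \rho*\eta_\ell$ and $u_\ell = u*\eta_\ell$ with $\eta\in\mathcal{K}_{\rm rad}$. These are smooth, and the classical product rule gives
$$E(\rho_\ell u_\ell) = \rho_\ell\, E u_\ell + \frac{u_\ell\otimes \nabla\rho_\ell + \nabla \rho_\ell\otimes u_\ell}{2}.$$
Since $\rho_\ell u_\ell \to \rho u$ in $L^1_{\rm loc}$ with uniform $L^\infty$ bounds, the left-hand side converges to $E(\rho u)$ in the sense of distributions. The plan is to identify the limit of each term on the right as a measure. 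This simultaneously shows that $E(\rho u)$ is a Radon measure, because the right-hand side is bounded in total variation by $\norm{\rho}_\infty \norm{Eu}_{\mathcal M} + \norm{u}_\infty \norm{\nabla\rho}_{\mathcal M}$. Hence $\rho u\in BD$, and $\rho u\in L^\infty$ is trivial.

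For the term $\rho_\ell\, Eu_\ell$, note that $E u_\ell = (Eu)*\eta_\ell$. We apply \cref{P: mollification of function and measure} with $g=\rho$, $F=\mathrm{id}$, and $\lambda = Eu$ (componentwise). This requires that the set where \eqref{eq: blow up vector} fails for $\rho$ be $\abs{Eu}$-negligible. By \cref{P: prop BV}, $\mathcal{B}_\rho^c$ is $\mathcal{H}^{d-1}$-negligible, and $\abs{Eu}\ll\mathcal{H}^{d-1}$ by \cref{P: prop BD}, so the condition holds. The limit is then $\int_0^1 (s\rho^+ + (1-s)\rho^-)\,ds\; Eu = \frac{\rho^++\rho^-}{2}\,Eu$.

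For the term $u_\ell\otimes\nabla\rho_\ell$, we use $\nabla\rho_\ell = (\nabla\rho)*\eta_\ell$ and apply the same proposition with $g=u$, $\lambda=\nabla\rho$. The hypothesis is that $\mathcal{B}_u^c$ is $\abs{\nabla\rho}$-negligible, which is exactly the content of \cref{R: negligible set of BD BV}: take $h=(\rho,0,\dots,0)$, observe that $\abs{\nabla\rho}\ll\abs{Eh}$ (the first row/column of $Eh$ contains $\partial_j\rho$ up to a factor $\tfrac12$), and invoke \cref{P: prop BD}(2). The limit is $\frac{u^++u^-}{2}\otimes\nabla\rho$, and symmetrically for the transposed term. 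Combining the three limits yields \eqref{eq: chain rule BV BD}.

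A subtle point is the consistency of orientations. The normals $n_\rho$ and $n_u$ must agree wherever both functions jump, otherwise the averages $\frac{\rho^++\rho^-}{2}$ could be ill-defined relative to the measure. However, symmetric averages are invariant under swapping $\pm$, so any orientation works, and this issue disappears. The main obstacle is therefore only the negligibility hypothesis for $\nabla\rho$ with respect to the $BD$ blow-up set, which is supplied by \cref{R: negligible set of BD BV}. Finally, taking the trace of \eqref{eq: chain rule BV BD} gives $\div(\rho u) = \frac{\rho^++\rho^-}{2}\div u + \frac{u^++u^-}{2}\cdot\nabla\rho$, and \eqref{eq: chain rule div BV BD} follows when $\div u=0$.
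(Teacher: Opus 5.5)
Your proposal is correct and follows the paper's proof essentially line by line. You mollify, apply the smooth product rule to $\rho_\ell u_\ell$, and identify the limits of $\rho_\ell\, Eu_\ell$ and $u_\ell\otimes\nabla\rho_\ell$ via \cref{P: mollification of function and measure}, using $\mathcal{H}^{d-1}(\mathcal{B}_\rho^c)=0$ with $\abs{Eu}\ll\mathcal{H}^{d-1}$ for the first term and \cref{R: negligible set of BD BV} for the second; your extra remarks on the uniform total-variation bound and on the invariance of the symmetric averages under swapping $\pm$ are correct and just make explicit what the paper leaves implicit.
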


We point out that the right hand side in \eqref{eq: chain rule BV BD} is a well defined Radon measure. Indeed, by \cref{P: prop BV} and \cref{P: prop BD}, $(\rho^+, \rho^-)$ are defined $\abs{Eu}$-almost everywhere and, also using \cref{R: negligible set of BD BV}, we have that $(u^+, u^-)$ are defined $\abs{\nabla \rho}$-almost everywhere. 

\begin{proof} 
Fix a kernel $\eta \in \mathcal{K}_{\rm rad}$ and let $\rho_\ell = \rho * \eta_\ell, u_\ell = u * \eta_\ell$. Then $E(\rho u)$ is the distributional limit of $E(\rho_\ell u_\ell)$ and we have 
\begin{equation}
    E( \rho_\ell u_\ell) = \rho_\ell \, ( E u_\ell ) + \frac{ u_\ell \otimes \nabla \rho_\ell + \nabla \rho_\ell \otimes u_\ell}{2}. 
\end{equation}
Since $\mathcal{H}^{d-1}(\mathcal{B}_\rho^c) =0$ and $\abs{E u} \ll \mathcal{H}^{d-1}$, we apply \cref{P: mollification of function and measure} with $F(c) = c, c \in \R$ and find
\begin{equation}
    \lim_{\ell \to 0} \rho_\ell  \, (E u_\ell) = \left(\int_0^1 F(s \rho^+ + (1-s) \rho^-) \, ds \right) Eu = \frac{\rho^+ + \rho^-}{2} \, Eu, 
\end{equation}
where the limit is taken weakly in the sense of measures. Similarly, by \cref{R: negligible set of BD BV} we have $ \abs{\nabla \rho} (\mathcal{B}_u^c ) = 0$. Hence, using again \cref{P: mollification of function and measure} and recalling that \eqref{eq: blow up vector} holds $\mathcal{H}^{d-1}$-a.e., we infer that 
\begin{equation}
    \lim_{\ell \to 0} \frac{ u_\ell  \otimes \nabla \rho_\ell + \nabla \rho_\ell \otimes u_\ell}{2} = \frac{(u^+ + u^-) \otimes \nabla \rho + \nabla \rho \otimes (u^+ + u^-) }{4}
\end{equation}
weakly in the sense of measures. Thus, $E(\rho u)$ is a Radon measure and the representation formula \eqref{eq: chain rule BV BD} is valid. To conclude, \eqref{eq: chain rule div BV BD} follows by taking the trace in \eqref{eq: chain rule BV BD}. 
\end{proof}

\section{Proofs} \label{s: proofs}

In this section, we discuss the proofs of our main results. 

\subsection{Dissipation on codimension one hypersurfaces} 

\begin{proof} [Proof of \cref{T: dissipation on hypersurfaces}]
We write $\Sigma = N \cup \bigcup_{i\in \N} \Sigma_i$, where $\mathcal{H}^d(N) = 0$ and $\Sigma_i$ are Lipschitz hypersurfaces in $\Omega \times (0,T)$. By \eqref{eq: defect distribution}, $\D$ is a space-time divergence, namely
$$ - \D = \div_{t,x} \mathcal{V}, \qquad \mathcal{V} := \left( \frac{\rho \abs{u}^2}{2} + \mathcal{P}(\rho) , u \left( \frac{\rho\abs{u}^2}{2} + p(\rho) + \mathcal{P}(\rho) \right) \right) \in L^\infty_{t,x}. $$
By \cref{P:div and normal trace}, we can neglect any set which is $\mathcal{H}^d$-negligible. In particular, we have $\D (N) =0$. Then, fix an index $i \in \N$ and let $\rho^\pm, u^\pm$ be the traces of $\rho,u$ at $\Sigma_i$. Using \cref{t: distributional trace vs lebesgue trace} and \cref{R: blow up of composition} to compute the distributional normal traces of $\mathcal{V}$ in terms of $\rho^\pm, u^\pm$, by \cref{P:div and normal trace} we have  
\begin{align}
    - \D \llcorner\Sigma_i & = (\mathcal{V}^+ - \mathcal{V}^-)\cdot n \, \mathcal{H}^d \llcorner \Sigma_i 
    \\ & = \left[ \frac{\rho^+ \abs{u^+}^2}{2} + \P(\rho^+) -  \frac{\rho^- \abs{u^-}^2}{2} - \P(\rho^-) \right] n_t \, \mathcal{H}^d \llcorner \Sigma_i
    \\ & \qquad + \left[ u^+ \left( \frac{\rho^+ \abs{u^+}^2}{2} + p(\rho^+) + \P(\rho^+) \right) - u^- \left( \frac{\rho^- \abs{u^-}^2}{2} + p(\rho^-) + \P(\rho^-) \right) \right]\cdot n_x \, \mathcal{H}^d \llcorner \Sigma_i, \label{eq: formula V}
\end{align}
where we set $n= (n_t, n_x) \in \R \times \R^d$. In the following, we denote by $u_n^\pm = u^\pm \cdot n_x $. Similarly, we interpret \eqref{compr Euler} as the vanishing condition on the space-time divergence of certain bounded vector fields and, by the same reasoning, for $\mathcal{H}^d$-a.e. point at $\Sigma_i$ we have
\begin{align}
(\rho^+  n_t + \rho^+ u^+_n) u^+ + p(\rho^+) n_x & = (\rho^- n_t + \rho^-  u^-_n ) u^- + p(\rho^-) n_x, \label{eq: momentum traces}
\\ \rho^+ n_t + \rho^+ u^+_n & = \rho^- n_t + \rho^- u^-_n \label{eq: continuity traces}. 
\end{align} 
Taking the scalar product of \eqref{eq: momentum traces} by $u^+ + u^-$ and using \eqref{eq: continuity traces}, we find 
\begin{align}
    0 & = (\rho^+ n_t + \rho^+ u^+_n) \frac{\abs{u^+}^2 }{2} - (\rho^- n_t + \rho^- u^-_n) \frac{\abs{u^-}^2}{2}  + (p(\rho^+) - p(\rho^-) ) \frac{u^+_n + u^-_n}{2} \label{eq: manipulation 1}
\end{align}
$\mathcal{H}^d$-a.e. at $\Sigma_i$. Plugging \eqref{eq: manipulation 1} into \eqref{eq: formula V}, we find 
\begin{align}
    (\mathcal{V}^+ - \mathcal{V}^- )\cdot n & = - \frac{p(\rho^+) - p(\rho^-)}{2} (u^+_n + u^-_n) + p(\rho^+) u^+_n - p(\rho^-) u^-_n + \P( \rho^+) (n_t + u^+_n) - \P(\rho^-) (n_t + u^-_n)
    \\ & = \frac{p(\rho^+) + p(\rho^-)}{2} (u^+_n - u^-_n) + ( n_t +  u^+_n) \P(\rho^+) - ( n_t +  u^-_n) \P(\rho^-)
\end{align}
at $\mathcal{H}^d$-a.e. point at $\Sigma_i$. Using \eqref{internal energy} and \eqref{eq: continuity traces}, for $\rho^+ \neq \rho^-, \rho^+ > 0$ and $\rho^->0$, we write 
\begin{align}
    ( n_t +  u^+_n) \P(\rho^+) - ( n_t +  u^-_n) \P(\rho^-) & = (\rho^+ (n_t + u^+_n)) \int_{\rho^-}^{\rho^+} \frac{p(r)}{r^2} \, dr
    \\ & = (\rho^+ - \rho^-) (\rho^+ (n_t + u^+_n)) \fint_{\rho^-}^{\rho^+} \frac{p(r)}{r^2}\, dr 
    \\ & = \left[ \rho^+ \rho^- (n_t + u^-_n) - \rho^+ \rho^- (n_t + u^+_n) \right] \fint_{\rho^-}^{\rho^+} \frac{p(r)}{r^2}\, dr 
    \\ & = - (u^+_n - u^-_n)\frac{\rho^+ \rho^-}{(\rho^+ - \rho^-)} \int_{\rho^-}^{\rho^+} \frac{p(r)}{r^2} \, dr. 
\end{align}
Now, suppose that $\rho^+ = \rho^- >0$. By \eqref{eq: continuity traces} and \eqref{internal energy} we have   
\begin{align}
    (n_t + u^+_n) \P(\rho^+) = (n_t + u^-_n) \P(\rho^-), \qquad  u^+_n = u^-_n.
\end{align}
Thus we have $ (\mathcal{V}^+ - \mathcal{V}^-)\cdot n = 0$. The case in which $\rho^+ = 0$ or $\rho^- =0$ can be addressed in the same way. 
\end{proof}

\subsection{The local energy balance}

\begin{proof} [Proof of \cref{t: main DR}] 
For the sake of clarity, we break down the proof into steps. We adopt the notation of \eqref{eq: favre mollification}. For simplicity, we set $\overline{p} = \overline{p(\rho)}$. 

\textsc{\underline{Step 1}:} Mollifying the momentum and the mass equations, we obtain 
    \begin{align} 
    \partial_t \overline{\rho u} + \div \left( \overline{\rho u \otimes u} \right) + \nabla \overline{p} & = \overline{\rho f}, \label{eq: en bal 1}
    \\ \partial_t \overline{\rho} + \div(\overline{\rho u}) & =0. \label{eq: en bal 1.5}
\end{align}
By the properties of convolution, $\overline{\rho}, \overline{\rho u}$ are smooth in space and, by \eqref{eq: en bal 1} and \eqref{eq: en bal 1.5}, it turns out that $\partial_t \overline{\rho}, \partial_t \overline{\rho u} \in L^\infty_{t,x}$. Therefore, $\overline{\rho}, \overline{\rho u}$ are Lipschitz continuous in space-time and the following computations are justified. We aim to take the scalar product of \eqref{eq: en bal 1} by $\widetilde{u}$. To this end, by \eqref{eq: en bal 1.5}, we compute 
\begin{align} 
    \widetilde{u} \cdot \partial_t (\overline{\rho u}) & = \widetilde{u} \cdot \partial_t(\overline{\rho}\, \widetilde{u}) = \abs{\widetilde{u}}^2 \partial_t \overline{\rho} + \overline{\rho} \, \widetilde{u} \cdot \partial_t \widetilde{u}
    \\ & = \partial_t \left( \frac{\overline{\rho} \abs{\widetilde{u}}^2}{2}  \right) + \frac{\abs{\widetilde{u}}^2}{2} \partial_t \overline{\rho} = \partial_t \left( \frac{\overline{\rho} \abs{\widetilde{u}}^2}{2}  \right) - \frac{\abs{\widetilde{u}}^2}{2} \div(\overline{\rho u}), \label{eq: en bal 2} 
\end{align}
\begin{align}
    \widetilde{u}\cdot \div (\overline{\rho u \otimes u}) & = \widetilde{u} \cdot \div \left( \overline{\rho} \, (\widetilde{u\otimes u} - \widetilde{u} \otimes \widetilde{u}) \right) + \widetilde{u} \cdot (\overline{\rho} \,\widetilde{u} \cdot \nabla) \, \widetilde{u} + \div (\overline{\rho u}) \abs{\widetilde{u}}^2
    \\ & = \widetilde{u} \cdot \div \left( \overline{\rho} \, (\widetilde{u\otimes u} - \widetilde{u} \otimes \widetilde{u}) \right) + \abs{\widetilde{u}}^2 \overline{\rho u} \cdot \nabla \frac{\abs{\widetilde{u}}^2}{2} + \div (\overline{\rho u}). \label{eq: en bal 3} 
\end{align}
Therefore, taking the scalar product of \eqref{eq: en bal 1.5} by $\widetilde{u}$ and using \eqref{eq: en bal 2} and \eqref{eq: en bal 3}, we obtain 
\begin{equation}
    \partial_t \left( \frac{\overline{\rho} \abs{\widetilde{u}}^2}{2}\right) + \div \left( \overline{\rho u} \, \frac{\abs{\widetilde{u}}^2}{2} \right) + \div \left( \overline{\rho} \, (\widetilde{u\otimes u} - \widetilde{u} \otimes \widetilde{u}) \right) \cdot \widetilde{u} + \nabla \overline{p} \cdot \widetilde{u} = \overline{\rho f} \cdot \widetilde{u}. 
\end{equation}
Further manipulations yield 
\begin{align}
    \partial_t \left( \frac{\overline{\rho} \abs{\widetilde{u}}^2}{2}\right) + \div \left( \widetilde{u} \left(  \frac{ \overline{\rho} \abs{\widetilde{u}}^2}{2} + \overline{p} \right) + \overline{\rho u} \, (\widetilde{u\otimes u} - \widetilde{u} \otimes \widetilde{u} ) \right) & =  \overline{p} \, \div (\widetilde{u}) + \overline{\rho f} \cdot \widetilde{u} + \overline{\rho} \, (\widetilde{u \otimes u} - \widetilde{u}\otimes \widetilde{u}) : \nabla \widetilde{u}
    \\ & = \overline{p} \, \div (\widetilde{u}) + \overline{\rho f} \cdot \widetilde{u} - J_1^\ell, \label{eq: en bal 5}
\end{align}
since the matrix $\widetilde{u \otimes u} -\widetilde{u}\otimes \widetilde{u}$ is symmetric and  $J_1^\ell$ is defined by \eqref{eq: J_1}. 

\textsc{\underline{Step 2}:} To study the evolution of $\P(\rho)$, we multiply \eqref{eq: en bal 1.5} by $\P'(\overline{\rho})$ and we find  
\begin{align}
     \partial_t \P(\overline{\rho}) + \P'(\overline{\rho}) \div (\overline{\rho} \overline{u}) = 0. \label{eq: en bal 6}
\end{align}
Observe that $\P$ satisfies
\begin{equation} \label{eq: prop of P}
    z \, \P'(z) = \P(z) +p(z) \qquad \forall z >0. 
\end{equation}
Standard manipulations give 
\begin{align}
    \P'(\overline{\rho}) \div(\overline{\rho u}) & = \P'(\overline{\rho}) \, \overline{\rho} \div(\widetilde{u}) + \P'(\overline{\rho}) \, \nabla\overline{\rho} \cdot \widetilde{u} = \div(\P(\overline{\rho}) \, \widetilde{u}) + p(\overline{\rho}) \div(\widetilde{u}). \label{eq: en bal 7}
\end{align}
Summing \eqref{eq: en bal 5} and \eqref{eq: en bal 6} and using \eqref{eq: en bal 7}, we have 
\begin{align}
    \partial_t \left( \frac{\overline{\rho} \abs{\widetilde{u}}^2}{2} + \P(\overline{\rho}) \right) & + \div \left( \widetilde{u} \left( \frac{ \overline{\rho} \abs{\widetilde{u}}^2}{2} + \overline{p(\rho)} + \P(\overline{\rho}) \right) + \overline{\rho u} \, (\widetilde{u\otimes u} - \widetilde{u} \otimes \widetilde{u} ) \right)  = \overline{\rho f} \cdot \widetilde{u} - J_1^\ell - J_2^\ell,   
\end{align}
where $J_2^\ell$ is defined by \eqref{eq: J_2}.

\textsc{\underline{Step 3}:} Letting $\ell \to 0$, by the assumptions on $u,\rho,f$, the left hand side goes to 
$$ \partial_t \left( \frac{\rho \abs{u}^2}{2} + \P(\rho)  \right) + \div \left( u \left( \frac{\rho \abs{u}^2}{2} + p(\rho) + \P(\rho) \right) \right) $$ 
in the sense of distributions and $\overline{\rho f} \cdot \widetilde{u} \to \rho f \cdot u$ in $L^1_{t,x}$. Therefore, we conclude that $J_1^\ell +J_2^\ell \rightharpoonup \D$ in the sense of distributions. 
\end{proof}

\subsection{The bounded variation case}

To achieve the proof of \cref{T: main bounded var} we explicitly compute the limit of the error terms given by \cref{t: main DR}. We follow the blow up analysis of \cite{Inv25}. We treat separately the terms $J_1^\ell, J_2^\ell$ and we begin with the stationary case, the time dependent one being a minor modification on the following argument. 

\begin{proposition} \label{P: J^1}
Let $u \in L^\infty \cap BD$ and let $\rho \in L^\infty \cap BV$ satisfy \eqref{no vacuum}. Fix $\eta \in \mathcal{K}_{\text{rad}}$ and for $\ell>0$ define $J_1^\ell$ by \eqref{eq: J_1}, that is
\begin{equation}
    J_1^\ell = \overline{\rho} \left( \widetilde{u} \otimes \widetilde{u} - \widetilde{u \otimes u} \right) : E \widetilde{u}. 
\end{equation}  
Let $\rho^\pm, u^\pm$ be the traces of $\rho,u$ in $\J_u$, which is oriented by a unit normal vector $n$. Then we have
\begin{equation} \label{eq: formula J^1 explicit}
    \lim_{\ell \to 0} J_1^\ell = - \left[ \abs{\rho^+ \rho^-}^2 (u^+_n - u^-_n) \abs{u^+-u^-}^2 \int_0^1 \frac{s(1-s)}{(s \rho^+ + (1-s) \rho^-)^3} \, ds \right] \mathcal{H}^{d-1} \llcorner \J_u
\end{equation} 
weakly in the sense of measures, where we set $u^\pm_n = u^\pm \cdot n$. 
\end{proposition}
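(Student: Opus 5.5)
The strategy is to write $J_1^\ell$ as a nonlinear function of mollified quantities paired against a mollified measure, and then apply \cref{P: mollification of function and measure}. Set $m := \rho u$, $M := \rho\, u\otimes u$, and $g := (\rho, m, M)$, a bounded Borel function. Then $\overline{\rho} = g_\ell^{(1)}$, $\widetilde u = \overline{m}/\overline{\rho}$ and $\widetilde{u\otimes u} = \overline{M}/\overline\rho$. Because of \eqref{no vacuum}, $\overline\rho\ge c$, so all of these are continuous functions of $g_\ell := g*\eta_\ell$ on the range of $g$. Consequently
\begin{equation}
J_1^\ell = G(g_\ell) : E\Big(\frac{\overline m}{\overline\rho}\Big), \qquad G(a,b,C) := \frac{b\otimes b}{a} - C .
\end{equation}
The obstacle is that $E\widetilde u$ is not the mollification of a measure. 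I would expand it by the quotient rule:
\begin{equation}
E\widetilde u = \frac{1}{\overline\rho}\, E\overline m - \frac{\overline m\odot\nabla\overline\rho}{\overline\rho^{\,2}},
\end{equation}
where $E\overline m = (E m)*\eta_\ell$, $\nabla\overline\rho = (\nabla\rho)*\eta_\ell$, and $\odot$ denotes the symmetrized tensor product. By \cref{L: product rule BV BD}, $m\in BD$, so $Em$ is a Radon measure with $|Em|\ll\mathcal H^{d-1}$. We thus obtain $J_1^\ell = F_1(g_\ell):(Em)_\ell + F_2(g_\ell)\cdot(\nabla\rho)_\ell$, with $F_1, F_2$ continuous on the range of $g$.

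Next I would verify the hypothesis of \cref{P: mollification of function and measure}, namely that \eqref{eq: blow up vector} holds for $g$ at $|Em|$-a.e. and $|\nabla\rho|$-a.e. point. The idea is to combine \cref{P: prop BV}, \cref{P: prop BD} and \cref{R: negligible set of BD BV}: $\rho$ has blow ups $\mathcal H^{d-1}$-a.e., while $u$ has blow ups $|Ef|$-a.e. for any $BD$ field $f$, in particular for $f = m$ and for $f = (\rho,0,\dots,0)$. By \cref{R: blow up of composition}, $g$ then has blow ups at those points, with traces $g^\pm = (\rho^\pm, \rho^\pm u^\pm, \rho^\pm u^\pm\otimes u^\pm)$. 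The limit therefore equals
\begin{equation}
\Big(\int_0^1 F_1(g_s)\,ds\Big): Em + \Big(\int_0^1 F_2(g_s)\,ds\Big)\cdot \nabla\rho, \qquad g_s := s g^+ + (1-s)g^- .
\end{equation}

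The remaining work is to localize and compute. At Lebesgue points of $g$ we have $g^+=g^-$, so $G(g_s) = u\otimes u - u\otimes u = 0$; hence only the jump part $\J_g$ contributes, and away from it the contributions of the diffuse parts of $Em$ and $\nabla\rho$ vanish. On $\J_\rho\cup\J_u$, which is countably rectifiable, I would use \eqref{eq: symmetric gradient jump} and \cref{P: prop BV}(3): $Em = (m^+-m^-)\odot n\,\mathcal H^{d-1}$ and $\nabla\rho = (\rho^+-\rho^-)n\,\mathcal H^{d-1}$. The two terms then recombine into $G(g_s): \big((\tfrac{d}{ds}\tfrac{m_s}{\rho_s})\odot n\big)$, where $\rho_s = s\rho^+ + (1-s)\rho^-$ and $m_s = s m^+ + (1-s)m^-$. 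This is the algebraic identity $\tfrac{d}{ds}(m_s/\rho_s) = \tfrac{\rho^+\rho^-(u^+-u^-)}{\rho_s^2}$.

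For the other factor, a direct computation gives
\begin{equation}
G(g_s) = \frac{m_s\otimes m_s}{\rho_s} - M_s = -\frac{s(1-s)\rho^+\rho^-}{\rho_s}(u^+-u^-)\otimes(u^+-u^-).
\end{equation}
Contracting the two expressions yields the integrand $-\rho_s^{-3}(\rho^+\rho^-)^2 s(1-s)|u^+-u^-|^2(u^+_n-u^-_n)$. This vanishes where $u^+=u^-$, so the measure is supported on $\J_u$, which gives \eqref{eq: formula J^1 explicit}.

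I expect the main difficulty to be the rigorous justification of the blow-up hypothesis $|Em|$-a.e. and $|\nabla\rho|$-a.e., given that for $BD$ fields it is unknown whether $\mathcal H^{d-1}(\S_u\setminus\J_u)=0$. This is exactly where \cref{P: prop BD}(2), applied to $m$, is needed.
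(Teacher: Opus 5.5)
Your proposal is correct and follows essentially the same route as the paper. The paper likewise splits $J_1^\ell$ by the quotient rule into a term paired with $E(\overline{\rho u})$ and a term paired with $\nabla\overline{\rho}$, and applies \cref{P: mollification of function and measure} with $V=(\rho u\otimes u,\rho u,\rho)$, justifying the blow-up hypothesis via \cref{P: prop BD}(2) and \cref{R: negligible set of BD BV}. It then recombines the jump contributions through the identity $\rho_s(m^+-m^-)-m_s(\rho^+-\rho^-)=\rho^+\rho^-(u^+-u^-)$, which you phrase more compactly as $\frac{d}{ds}(m_s/\rho_s)$.
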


\begin{proof}
By \eqref{eq: favre mollification} we have 
\begin{equation}
    J_1^\ell =  \left[ \frac{\overline{\rho u} \otimes \overline{\rho u} }{(\overline{\rho})^2} - \frac{\overline{\rho u \otimes u }}{\overline{\rho}} \right] : E ( \overline{\rho u}) - \left[ \frac{(\overline{\rho u}\otimes \overline{\rho u} ) \, \overline{\rho u}}{(\overline{\rho})^3} - \frac{(\overline{\rho u \otimes u}) \, \overline{\rho u}}{(\overline{\rho})^2}  \right] \cdot \nabla \overline{\rho} =: J_{1,1}^\ell - J_{1,2}^\ell. 
\end{equation}
We treat both terms separately. 

\textsc{\underline{The term $J_{1,1}^\ell$}: } We describe the procedure in detail, since the same argument will be repeated multiple times to analyse all the other terms. We denote by 
\begin{equation}
    F_{1,1}(a,b,c) := \frac{b\otimes b}{c^2} - \frac{a}{c}   \qquad (a,b,c)\in \R^{d\times d} \times \R^d \times \R. 
\end{equation} 
By \cref{L: product rule BV BD}, we have $\rho u \in BD$. Hence, by \cref{P: prop BV} and \cref{P: prop BD}, the set $(\mathcal{B}_u \cap \mathcal{B}_\rho)^c$ is $\abs{E (\rho u)}$-negligible. Setting 
$$ V:= (\rho u \otimes u, \, \rho u, \, \rho ), $$
by \cref{R: blow up of composition}, we infer that  $\mathcal{B}_V^c$ is $\abs{E(\rho u)}$-negligible and 
\begin{equation}
    (V^+, V^-, n_V) = (( \rho^+ u^+ \otimes u^+ , \rho ^+ u^+ , \rho^+), ( \rho^- u^-\otimes u^-, \rho^- u^-, \rho^-), n ) \qquad \abs{E(\rho u)}-\text{a.e.}.  
\end{equation}
Thus, by \cref{P: mollification of function and measure} we have
\begin{equation}
    \lim_{\ell \to 0 } J_{1,1}^\ell = \left( \int_0^1 F_{1,1} (s V^+ + (1-s) V^- ) \, ds \right) : E(\rho u), 
\end{equation}
weakly in the sense of measures. Then, for any $s\in (0,1)$ we compute
\begin{align}
     & F_{1,1}(s V^+ +(1-s) V^-) 
     \\ & \qquad =  \frac{(s\rho^+ u^+ +(1-s) \rho^- u^- ) \otimes (s \rho^+ u^+ + (1-s) \rho^- u^-)}{ (s \rho^+ + (1-s) \rho^-)^2 } -  \frac{ s \rho^+ u^+\otimes u^+ + (1-s) \rho^- u^- \otimes u^- }{\rho^+ s + \rho^-(1-s)} 
     \\ & \qquad = - \rho^+ \rho^- (u^+-u^-)\otimes (u^+-u^-)   \frac{ s(1-s) }{(s \rho^+ + (1-s) \rho^-)^2} . \label{eq: F^1,1 explicit}
\end{align}
The latter vanishes for $x \in \mathcal{S}_u$. Thus, recalling that $\J_u$ is $(d-1)$-countably rectifiable and using \cref{P: prop BD}, we have
\begin{align}
    \lim_{\ell \to 0} J_{1,1}^\ell & = \left( \int_0^1 F_{1,1} (s V^+ + (1-s) V^- ) \, ds \right) : E(\rho u) \llcorner \J_u
    \\ & = \left( \int_0^1 F_{1,1} (s V^+ + (1-s) V^- ) \, ds \right) : \left[ \frac{(\rho^+ u^+ - \rho^- u^- ) \otimes n + n \otimes (\rho^+ u^+ -\rho^- u^-)}{2} \right] \mathcal{H}^{d-1} \llcorner \J_u.  
\end{align}

\textsc{\underline{The term $J_{1,2}^\ell$}: } We denote by 
$$F_{1,2} (a,b,c) := \frac{(b\otimes b) b }{c^3} - \frac{a b}{c^2} = F_{1,1} (a,b,c) \frac{b}{c} \qquad (a,b,c) \in \R^{d\times d} \times \R^d \times \R. $$
Repeating the argument of the previous step with \cref{R: negligible set of BD BV}, we have 
\begin{align}
    \lim_{\ell \to 0 } J_{1,2}^\ell & = \left( \int_0^1 F_{1,2} (s V^+ + (1-s) V^- ) \, ds \right) \cdot  \nabla \rho 
    \\ & =  \left( \int_0^1 F_{1,1}(sV^+ +(1-s) V^- ) : \frac{(s \rho^+ u^+ + (1-s) \rho^- u^-) }{s \rho^+ + (1-s) \rho^-} \otimes n \, ds \right) (\rho^+ -\rho^-) \, \mathcal{H}^{d-1} \llcorner \J_u 
\end{align} 
weakly in the sense of measures, where we recall that $V=(\rho u\otimes u, \, \rho u, \, \rho)$. 

\textsc{\underline{Summing all terms}:} In summary, by \eqref{eq: F^1,1 explicit} and since $(u^+-u^-)\otimes (u^+-u^-)$ is symmetric, we have 
\begin{align}
    \lim_{\ell \to 0} & J_{1,1}^\ell - J_{1,2}^\ell  = - \rho^+ \rho^- (u^+ - u^-)\otimes (u^+ -u^-) 
    \\ & : \left( \int_0^1 \frac{s(1-s) }{(s \rho^+ + (1-s) \rho^-)^2} \left( (\rho^+ u^+ -\rho^- u^-) - \frac{ (s \rho^+ u^+ + (1-s) \rho^- u^-) (\rho^+ - \rho^-)}{s\rho^+ + (1-s) \rho^-} \right) \otimes n \, ds  \right)\mathcal{H}^{d-1}\llcorner \J_u, 
\end{align}
weakly as measures. A direct computation shows that 
\begin{equation}
    (\rho^+ u^+ -\rho^- u^-) - \frac{ (s \rho^+ u^+ + (1-s) \rho^- u^-) (\rho^+ - \rho^-)}{s\rho^+ + (1-s) \rho^-} = \frac{\rho^+ \rho^- (u^+ -u^-)}{s \rho^+ + (1-s)\rho^-}. 
\end{equation}
Hence, \eqref{eq: formula J^1 explicit} follows. 
\end{proof}

\begin{proposition} \label{P: J^2}
Fix a continuous function $p: \R \to \R$. Let $u \in L^\infty \cap BD$ and let $\rho \in L^\infty \cap BV$ satisfy \eqref{no vacuum}.  Fix $\eta \in \mathcal{K}_{\text{rad}}$ and for $\ell>0$ define $J_2^\ell$ by \eqref{eq: J_2}, that is 
\begin{equation}
    J_2^\ell =  \div(\widetilde{u}) \left( p(\overline{\rho}) - \overline{p(\rho)} \right). 
\end{equation}
Let $\rho^\pm, u^\pm$ be the traces of $\rho,u$ in $\J_u$, which is oriented by a unit normal vector $n$. Then we have 
\begin{equation} \label{eq: formula J^2 explicit}
    \lim_{\ell \to 0} J_2^\ell = \left[ \rho^+ \rho^- (u^+_n - u^-_n) \int_0^1 \frac{p(s \rho^+ + (1-s) \rho^-) - (s p(\rho^+) + (1-s) p(\rho^-) ) }{(s \rho^+ + (1-s) \rho^-)^2} \, ds \right] \mathcal{H}^{d-1} \llcorner (\J_\rho \cap \J_u)  
\end{equation}
weakly in the sense of measures, where we set $u^\pm_n = u^\pm \cdot n$.  
\end{proposition}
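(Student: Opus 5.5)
We argue exactly as for $J_1^\ell$ in \cref{P: J^1}: rewrite $J_2^\ell$ as a continuous function of mollified quantities times a mollified measure, and pass to the limit with \cref{P: mollification of function and measure}. Since $\widetilde{u} = \overline{\rho u}/\overline{\rho}$, we have
\begin{equation}
    \div(\widetilde{u}) = \frac{\div(\overline{\rho u})}{\overline{\rho}} - \frac{\overline{\rho u}\cdot \nabla \overline{\rho}}{(\overline{\rho})^2},
\end{equation}
so that $J_2^\ell = J_{2,1}^\ell - J_{2,2}^\ell$. Here
\begin{equation}
    J_{2,1}^\ell := \frac{p(\overline{\rho}) - \overline{p(\rho)}}{\overline{\rho}} \, \tr E(\overline{\rho u}), \qquad J_{2,2}^\ell := \frac{\big(p(\overline{\rho}) - \overline{p(\rho)}\big)\, \overline{\rho u}}{(\overline{\rho})^2} \cdot \nabla \overline{\rho}.
\end{equation}
We set $W := (p(\rho), \rho u, \rho)$ and define the continuous functions (on the range, thanks to \eqref{no vacuum})
\begin{equation}
    F_{2,1}(a,b,c) := \frac{p(c)-a}{c}, \qquad F_{2,2}(a,b,c) := \frac{(p(c)-a)\, b}{c^2}.
\end{equation}
Then $J_{2,1}^\ell = F_{2,1}(\overline{W})\, \tr E(\rho u)*\eta_\ell$ and $J_{2,2}^\ell = F_{2,2}(\overline{W}) \cdot \nabla\rho * \eta_\ell$.

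The next step is the blow-up bookkeeping. By \cref{L: product rule BV BD}, $\rho u\in BD$. By \cref{P: prop BV}, \cref{P: prop BD} and \cref{R: negligible set of BD BV}, the set $(\mathcal{B}_\rho\cap\mathcal{B}_u)^c$ is negligible both for $|E(\rho u)|$ and for $|\nabla\rho|$. By \cref{R: blow up of composition}, $W$ then has the traces $W^\pm = (p(\rho^\pm),\rho^\pm u^\pm,\rho^\pm)$ almost everywhere with respect to both measures. \cref{P: mollification of function and measure} then gives, weakly as measures,
\begin{equation}
    \lim_{\ell\to 0} J_2^\ell = \Big(\int_0^1 F_{2,1}(sW^++(1-s)W^-)\,ds\Big)\div(\rho u) - \Big(\int_0^1 F_{2,2}(sW^++(1-s)W^-)\,ds\Big)\cdot\nabla\rho .
\end{equation}
Write $\rho_s := s\rho^++(1-s)\rho^-$. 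Both integrands contain the factor $p(\rho_s) - (sp(\rho^+)+(1-s)p(\rho^-))$, which vanishes wherever $\rho^+=\rho^-$, so only $\J_\rho$ contributes. On the $(d-1)$-rectifiable set $\J_\rho$ we use \cref{P: prop BV} and \cref{P: prop BD}, namely $\nabla\rho = (\rho^+-\rho^-)\,n\,\mathcal{H}^{d-1}$ and $\div(\rho u) = (\rho^+u^+_n - \rho^-u^-_n)\,\mathcal{H}^{d-1}$. The integrand is therefore
\begin{equation}
    \frac{p(\rho_s) - sp(\rho^+) - (1-s)p(\rho^-)}{\rho_s^2}\Big[\rho_s(\rho^+u^+_n-\rho^-u^-_n) - (s\rho^+u^+_n+(1-s)\rho^-u^-_n)(\rho^+-\rho^-)\Big].
\end{equation}
The same algebraic identity used at the end of \cref{P: J^1} (scalar version, normal components) shows that the bracket equals $\rho^+\rho^-(u^+_n-u^-_n)$.

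This yields \eqref{eq: formula J^2 explicit}, with the measure supported on $\J_\rho$, and the factor $u^+_n-u^-_n$ kills $\J_\rho\setminus\J_u$. The main obstacle is the justification step rather than the algebra. First, one must check that the normal $n$ coming from $\rho$ and from $u$ agree $\mathcal{H}^{d-1}$-a.e. on $\J_\rho\cap\J_u$, which follows from the rectifiability discussion in \cref{ss:jumps}. Second, at points of $\J_\rho\setminus\J_u$ one needs $u^+=u^-$ to be meaningful. This holds because there $\mathcal{B}_u$ has full $|\nabla\rho|$-measure by \cref{R: negligible set of BD BV}, so $u$ has a constant blow up at such points.
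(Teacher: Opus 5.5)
Your proposal is correct and follows the paper's proof essentially line by line. It uses the same splitting $J_2^\ell = J_{2,1}^\ell - J_{2,2}^\ell$ coming from $\widetilde{u} = \overline{\rho u}/\overline{\rho}$, the same application of \cref{P: mollification of function and measure} against $\div(\rho u)$ and $\nabla\rho$, the same restriction to $\J_\rho$ (the factor $p(s\rho^++(1-s)\rho^-)-sp(\rho^+)-(1-s)p(\rho^-)$ vanishes where $\rho^+=\rho^-$), and the same algebraic identity for the bracket. Your closing remarks spell out two steps the paper leaves implicit: the agreement of normals on $\J_\rho\cap\J_u$, and $u^+=u^-$ $\mathcal{H}^{d-1}$-a.e. on $\J_\rho\setminus\J_u$. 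The latter is what justifies replacing $\J_\rho$ by $\J_\rho\cap\J_u$ in the final formula.
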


\begin{proof}
By the definition of $\widetilde{u}, \overline{p(\rho)}, p(\overline{\rho})$, we have 
\begin{equation}
    J_2^\ell = \left[ \frac{ p(\overline{\rho}) - \overline{p(\rho)} }{\overline{\rho}} \right] \div(\overline{\rho u}) - \left[ \frac{ p(\overline{\rho}) - \overline{p(\rho)}}{(\overline{\rho})^2} \right] \overline{\rho u}\cdot \nabla \overline{\rho} =: J_{2,1}^\ell -  J_{2,2}^\ell.   
\end{equation}
As in the proof of \cref{P: J^1}, we estimate both terms separately. 

\textsc{\underline{The term $J_{2,1}^\ell$}: } We denote by 
\begin{equation}
    F_{2,1}(c,d) := \frac{p(c) - d }{c} \qquad (c,d) \in \R\times \R.  
\end{equation}
We combine \cref{L: product rule BV BD}, \cref{P: prop BV}, \cref{P: prop BD}, \cref{R: blow up of composition} as in the proof of \cref{P: J^1}. By \cref{P: mollification of function and measure}, we infer 
\begin{equation}
    \lim_{\ell \to 0} J_{2,1}^\ell = \left( \int_0^1 F_{2,1} (s W^+ + (1-s) W^-) \, ds \right) \div(\rho u), \qquad W:= (\rho, p(\rho) )
\end{equation}
weakly as measures. Since $F_{2,1}( W^+ ) = 0 $ whenever $\rho^+ = \rho^-$ (which implies $W^+ = W^-$), by \cref{P: prop BD} we have  
\begin{equation}
    \lim_{\ell \to 0} J_{2,1}^\ell = \left( \int_0^1 F_{2,1} (s W^+ + (1-s) W^-) \, ds \right) (\rho^+ u^+_n - \rho^- u^-_n) \, \mathcal{H}^{d-1} \llcorner \J_{\rho} 
\end{equation}
weakly as measures. 

\textsc{\underline{The term $J_{2,2}^\ell$}: } We denote by 
\begin{equation}
    F_{2,2} (b,c,d) :=  \frac{p(c) - d}{c^2} b = F_{2,1} (c,d) \frac{b}{c} \qquad (b,c,d) \in \R^d \times \R \times \R.  
\end{equation}
We set $Z := (\rho u, \rho, p(\rho)) = (\rho u, W)$. With the same reasoning as in the proof of the second step of \cref{P: J^1}, we obtain 
\begin{align}
    \lim_{\ell \to 0} J_{2,2}^\ell & = \left( \int_0^1 F_{2,2}(s Z^+ + (1-s) Z^-) \, ds \right) \cdot \nabla \rho
    \\ & = \left( \int_0^1 F_{2,1} (s W^+ + (1-s) W^-) \frac{(s \rho^+ u^+_n + (1-s) \rho^- u^-_n)(\rho^+ - \rho^-)}{s \rho^+ + (1-s) \rho^- u^-} \, ds \right) \mathcal{H}^{d-1} \llcorner \J_\rho,   
\end{align}
where we used $F_{2,1}( W^+ ) = 0 $ whenever $\rho^+ = \rho^-$ and \cref{P: prop BV}. 

\textsc{\underline{Summing all terms}: } In summary, we have 
\begin{align}
    \lim_{\ell \to 0}  J_{2,1}^\ell - J_{2,2}^\ell & = \bigg[ \int_0^1 \frac{  p(s\rho^+ + (1-s) \rho^-) - (s p(\rho^+) +(1-s) p(\rho^-) ) }{s \rho^+ +(1-s)\rho^-} 
    \\ & \qquad \qquad \left( (\rho^+ u^+_n - \rho^- u^-_n) - \frac{(s \rho^+ u^+_n +(1-s) \rho^- u^-_n) (\rho^+ -\rho^-)}{s \rho^+ + (1-s)\rho^-} \right) \, ds \bigg] \mathcal{H}^{d-1}\llcorner \J_{\rho}
    \\ & = \left( \rho^+ \rho^- (u^+_n - u^-_n) \int_0^1 \frac{  p(s\rho^+ + (1-s) \rho^-) - (s p(\rho^+) +(1-s) p(\rho^-)) }{(s \rho^+ +(1-s)\rho^-)^2} \, ds \right) \mathcal{H}^{d-1} \llcorner \J_\rho,   
\end{align}
thus proving \eqref{eq: formula J^2 explicit}. 
\end{proof}

Finally, we are in the position to give the proof of \cref{T: main bounded var}. 

\begin{proof} [Proof of \cref{T: main bounded var}]
Fix a test function $\phi \in C_c(\Omega \times (0,T))$. Denote by $\rho_t^\pm, u_t^\pm$ the traces of $\rho_t, u_t$ at $\J_{u_t}$, which is oriented by a normal unit vector $n$, and $(u_t^\pm)_n = u_t^\pm \cdot n$. By \cref{P: J^1} and \cref{P: J^2}, for a.e. $t$ we have 
\begin{equation}
    \lim_{\ell \to 0} \langle J_1^\ell(t) + J_2^\ell (t), \phi_t \rangle = - \int_{\Omega\cap \J_{u_t}}  \phi_t \left[ \rho_t^+ \rho_t^- ((u^+_t)_n - (u^-_t)_n) \, \Phi_t \right] \, d \mathcal{H}^{d-1}(x), 
\end{equation}
where $\Phi_t$ is given by 
\begin{align}
    \Phi_t & := \rho^+_t \rho^-_t \abs{u^+_t - u^-_t}^2 \int_0^1 \frac{s(1-s)}{(s\rho^+_t + (1-s) \rho^-_t )^3} \, ds + \int_0^1 \frac{s p(\rho^+_t) + (1-s) p(\rho^-_t) -  p(s \rho^+_t + (1-s) \rho^-_t)) }{(s \rho^+_t + (1-s)\rho^-_t)^2}\, ds .  \label{eq: formula density}
\end{align}
Since both sequences $t \mapsto \langle J_1^\ell(t), \phi_t \rangle, t \mapsto \langle J_2^\ell(t), \phi_t\rangle $ are bounded in $L^\infty_t$, by dominated convergence with respect to the time variable we conclude 
\begin{align} \label{eq: formula dissipation preliminary}
    \langle \D , \phi\rangle & = \int_0^T \lim_{\ell \to 0} \langle J_1^\ell(t) + J_2^\ell(t) , \phi_t \rangle \, dt = - \int_0^T \int_{\Omega \cap \J_{u_t}} \phi_t(x) \, [\rho^+_t \rho^-_t((u^+_t)_n -(u^-_t)_n)\, \Phi_t](x) \, d \mathcal{H}^{d-1} \, dt 
    \\ & = - \int_0^T \int_{\J_{u_t}} \phi_t(x) \rho^+_t(x) \rho^-_t(x)  \Phi_t(x) \, d \div(u_t)(x) \, dt.   
\end{align} 
To conclude the proof, we compute explicitly the integrals defining the function $\Phi_t$. For the reader's convenience, this is postponed to the following lemma. 
\end{proof}

\begin{lemma} \label{L: equivalent formulas} 
Let $\Phi, \Gamma, \Psi, \Pi$ be given by \eqref{eq: formula density}, \eqref{eq: function Gamma}, \eqref{eq: function Psi}, and \eqref{eq: function Pi}, respectively. For any $a,b \in (0,+\infty), v,w \in \R^d$ we have 
\begin{align} \label{eq: massaging diss formula}
    a b \, \Phi = \Gamma(a, b, v, w) \, \Psi(a,b) + \Pi (a,b). 
\end{align} 
Furthermore, assume that the pressure law is convex and nondecreasing. Then we have 
\begin{equation}
    \Pi(a,b) \geq 0.  
\end{equation}
\end{lemma}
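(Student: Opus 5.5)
The plan is to verify \eqref{eq: massaging diss formula} by matching coefficients, using one integration by parts, and then to read off the sign of $\Pi$ from the same decomposition. Throughout, write $\sigma(s):= s a + (1-s) b$, which is positive for $a,b>0$, and set
$$ I_3 := \int_0^1 \frac{s(1-s)}{\sigma(s)^3}\,ds, \qquad K:= \int_0^1 \frac{s\,p(a) + (1-s)\,p(b) - p(\sigma(s))}{\sigma(s)^2}\,ds, $$
so that by \eqref{eq: formula density}
$$ ab\,\Phi = (ab)^2\abs{v-w}^2 I_3 + ab\,K. $$
Expanding $\Gamma\Psi$ using \eqref{eq: function Gamma}, the identity \eqref{eq: massaging diss formula} is equivalent to the following two facts:
\begin{equation}
 \Psi(a,b) = ab\, I_3, \qquad \Pi(a,b) = ab\,K + (p(a)-p(b))(a-b)\,ab\,I_3 .
\end{equation}
The first fact absorbs the velocity-dependent part of $\Gamma$, and the second absorbs the pressure part.

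\textbf{First identity.} For $a\neq b$ I will compute $I_3$ by the substitution $r=\sigma(s)$, which gives $ds = dr/(a-b)$, $s=(r-b)/(a-b)$ and $1-s=(a-r)/(a-b)$. This turns $I_3$ into
$$ (a-b)^{-3}\int_b^a \frac{(r-b)(a-r)}{r^3}\,dr. $$
Expanding $(r-b)(a-r) = -r^2 + (a+b)r - ab$ and integrating term by term produces a logarithm plus rational terms. Multiplied by $ab$, this should reproduce \eqref{eq: function Psi}. For $a=b$ the computation is immediate: $ab\,I_3 = a^2\cdot a^{-3}\int_0^1 s(1-s)\,ds = (6a)^{-1}$.

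\textbf{Second identity.} Assume $a \neq b$. I split $ab\,K$ into the part containing $p(\sigma)$ and the part that is linear in $p(a),p(b)$.
\begin{itemize}
\item The $p(\sigma)$ part equals $-ab\int_0^1 p(\sigma)\sigma^{-2}\,ds$. The same substitution $r=\sigma(s)$ turns it into $-\frac{ab}{a-b}\int_b^a \frac{p(r)}{r^2}\,dr$, which is exactly the integral term of \eqref{eq: function Pi}.
\item For the linear part, the coefficient of $p(a)$ on the right-hand side is $ab\left[\int_0^1 s\,\sigma^{-2}\,ds + (a-b)\, I_3\right]$. I will use $\frac{d}{ds}\sigma^{-2} = -2(a-b)\sigma^{-3}$ and integrate by parts; the boundary terms vanish because $s(1-s)$ vanishes at $s=0,1$. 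This gives $(a-b)I_3 = \tfrac12\int_0^1 (1-2s)\sigma^{-2}\,ds$.
\item Substituting, the coefficient becomes $\frac{ab}{2}\int_0^1\sigma^{-2}\,ds = \frac{ab}{2}\cdot\frac{1}{ab} = \frac12$. The symmetric computation gives coefficient $\tfrac12$ for $p(b)$, so the linear part is $\frac{p(a)+p(b)}{2}$.
\end{itemize}
If $a=b$, both sides of the second identity vanish trivially, since $K=0$ and $\Pi(a,a)=0$.

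The main obstacle is only the bookkeeping in these computations. The key trick is recognizing that the integration by parts makes the $p(a),p(b)$ coefficients collapse to $\tfrac12$. Recovering the exact logarithmic expression in \eqref{eq: function Psi} is routine calculus.

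\textbf{Sign of $\Pi$.} Positivity follows directly from the second identity $\Pi = ab\,K + (p(a)-p(b))(a-b)\,\Psi$. By convexity of $p$, Jensen's inequality gives $s\,p(a)+(1-s)\,p(b) \ge p(\sigma(s))$, so $K\ge0$. By monotonicity of $p$, $(p(a)-p(b))(a-b)\ge 0$. Finally $\Psi = ab\,I_3 \ge 0$, since the integrand of $I_3$ is nonnegative. Hence $\Pi(a,b)\ge0$ for $a,b>0$. The cases where $a$ or $b$ vanishes follow from the continuity of $\Pi$ on $[0,+\infty)^2$ noted after \cref{T: dissipation on hypersurfaces}.
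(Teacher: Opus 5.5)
Your proof is correct and follows the paper's route. Both arguments reduce the identity to $\Psi = ab\int_0^1 s(1-s)\sigma^{-3}\,ds$ together with the decomposition $\Pi = ab\,K + (p(a)-p(b))(a-b)\,\Psi$, and both get the sign from Jensen's inequality and monotonicity. You differ only in technique: you obtain the two coefficients $\tfrac12$ by one integration by parts, where the paper computes $B_1, B_2$ in closed form, and you get $\Psi \ge 0$ directly from its integral representation, which is slicker than the paper's check that $z^2-1-2z\ln z\ge 0$.
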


\begin{proof} 
Since $\Phi$ is symmetric with respect to $a,b$, without loss of generality, we may assume $a \geq b >0$. To begin, we address the case $a  = b $. We have 
\begin{equation}
    ab \, \Phi = a  \abs{ v-w}^2   \int_0^1 s(1-s) \, ds = \frac{a}{6} \abs{v-w}^2.  
\end{equation}
From now on, we assume  $a > b >0$. Then, we explicitly compute the integrals in \eqref{eq: formula density}. We have 
\begin{equation}
    \Phi = ab \, \abs{v-w}^2 A + p(a) \, B_1 + p(b) \, B_2 - B_3,
\end{equation}
where we set 
\begin{align}
    A & := \int_0^1 \frac{s(1-s)}{(s a + (1-s)b)^3} \, ds = \frac{a + b}{2 ab \, (a-b)^2} - \frac{\ln(a) - \ln(b)}{(a-b)^3}, 
    \\ B_1 & : = \int_{0}^1 \frac{s}{(s a + (1-s) b)^2} \, ds = \frac{1}{(a-b)^2} \left[  \ln (a) - \ln (b) - \frac{a-b}{a} \right], 
    \\ B_2 & : = \int_{0}^1 \frac{1-s}{(s a + (1-s) b)^2} \, ds = \frac{1}{(a-b)^2} \left[  - (\ln (a) - \ln (b)) + \frac{a-b}{b} \right], 
    \\ B_3 & := \int_0^1 \frac{p(s a + (1-s) b )}{(s a + (1-s) b)^2} \, ds = \frac{1}{a-b} \int_{b}^{a} \frac{p(r)}{r^2} \, dr, 
    \end{align}
after some elementary (but tedious) computations\footnote{Note that $B_1$ and $B_2$ coincide if we permute $a$ and $b$.}. Thus, we write 
\begin{align}
    p(a) \, B_1 + p(b) \, B_2 = \frac{\ln(a) -\ln(b)}{(a-b)^2} (p(a) - p(b)) - \left( \frac{p(a)}{a} - \frac{p(b)}{b} \right) \frac{1}{a-b}.  
\end{align}
Then, we notice that 
\begin{equation}
    \frac{ab}{a-b} \left( \frac{p(a)}{a} - \frac{p(b)}{b} \right) = \frac{a+b}{2(a-b)} (p(a) - p(b)) - \frac{p(a) + p(b)}{2}. 
\end{equation} 
Therefore, we obtain 
\begin{equation}
    \Pi(a,b) = ab \, \left[ p(a) \, B_1 + p(b) \, B_2 - B_3 \right] + \frac{p(a) - p(b)}{a-b} \left[ \frac{a+b}{2} - \, ab \frac{\ln(a) - \ln(b)}{a-b} \right]. 
\end{equation}
Then \eqref{eq: massaging diss formula} follows. Assume now that the pressure law is nondecreasing and convex. Hence, looking at the integral definitions of $B_1, B_2, B_3$, it is clear that 
$$ p(a) \, B_1 + p(b) \, B_2 - B_3 \geq 0, \qquad \frac{p(a) - p(b)}{a-b} \geq 0, $$
by monotonicity and convexity. It remains to check that 
\begin{equation}
    \frac{a+b}{2} - ab \, \frac{\ln(a) - \ln(b)}{a-b} = \frac{b}{2(z-1)} \left[ z^2 - 1 - 2z \ln (z) \right] \geq 0, \qquad z= \frac{a}{b} >1. 
\end{equation} 
To prove that $\ z^2 - 1 - 2z \ln (z) \geq 0$ for $ z >0 $, it is enough to observe that 
$$\frac{d}{dz} (z^2 -1 - 2 z \ln(z)) = 2 (z-1 - \ln(z)) > 0 \qquad \text{ for } z>1$$
by the concavity of the logarithm. 
\end{proof}

\begin{remark} 
The function $\Psi$ defined by \eqref{eq: function Psi} is continuous on $(0, +\infty)$. Changing $a$ and $b$, we may assume $a \geq b$. By the Taylor expansion, we see that 
\begin{equation}
    \Psi(a,b) = \frac{1}{b (z-1)} \left[ \frac{z+1}{2(z-1)} - \frac{z \ln(z)}{(z-1)^2} \right] = \frac{1}{b} \left[ \frac{1}{2} -\frac{z}{3} + O(\abs{z-1}) \right] \qquad \text{ for } z = \frac{a}{b} \geq 1, \, b >0,
\end{equation}
The reminder is independent of $b$. Thus, it is clear that $\Psi$ is continuous on $(0, +\infty)^2$. 
\end{remark}

\subsection{Back to the formula for the dissipation} \label{ss: comparing formulas}

In this section, we compare the two formulas given by \cref{T: dissipation on hypersurfaces} and \cref{T: main bounded var}. It is clear that the two formulations must coincide at least in the stationary case. The main issue is that the equations provide information on the space-time dynamics. Thus, the result of \cref{T: dissipation on hypersurfaces} is natural from this point of view. On the other hand, exploiting the spatial regularity and fine properties of spatial commutators, in \cref{T: main bounded var} we compute the dissipation on space hypersurfaces, which are defined for almost every time, but do not need to be the time slice of a global space-time hypesurface. Moreover, it is not clear how to connect the existence of \emph{space traces for a.e. time} with that of \emph{space-time traces}. From a technical point of view, the notion of time disintegration of a space-time measure gives a possibility to establish a connection between the two scenarios.  

Fix $\mu \in \mathcal{M}((0,T) \times \Omega)$. We say that $\mu$ has a disintegration with respect to the time variable if there exists a family $\{\mu_t\}_t \subset \mathcal{M}(\Omega)$ such that $\mu = \mu_t \otimes \mathcal{L}^1\llcorner (0,T)$, that is 
\begin{equation} \label{eq: disintegration}
    \int_{(0,T) \times \Omega} \phi \, d \mu = \int_0^T \int_{\Omega} \phi_t \, d\mu_t \, dt \qquad \forall \phi \in C_c((0,T) \times \Omega).
\end{equation}
If it exists, the family $\{ \mu_t\}_t$ is uniquely determined for $\mathcal{L}^1$-a.e. $t \in (0,T)$, in the following sense. If $\{T_t\}_t$ is a weakly measurable family in $\mathcal{D}'(\Omega)$ such that $\mu= T_t \otimes dt $, that is
$$ \int_{(0,T) \times \Omega} \phi \, d \mu = \int_0^T \langle T_t, \phi_t\rangle \, dt \qquad \forall t\in (0,T),  $$
then $T_t = \mu_t$ for a.e. $t \in (0,T)$. Denoting by $\pi_t: (0,T) \times \Omega \to (0,T)$ the projection onto the time variable, it is known that the existence of the disintegration for $\mu$ is equivalent to $(\pi_t)_{\#}\mu \ll \mathcal{L}^1\llcorner (0,T)$. See e.g. \cite{AFP00}*{Theorem 2.28}. 

\begin{proposition} \label{P: equivalent formula}
Let $(\rho, u) \in L^\infty_{t,x}$ be a solution to \eqref{compr Euler} with pressure law $p \in C(\R)$ such that $p(0)=0$ and external force $f \in L^1_{t,x}$. Assume that both $ \partial_t \rho, \partial_t(\rho u) \in \mathcal{M}_{t,x} $ have a disintegration of the form
$$\partial_t \rho = \mu_t \otimes dt, \qquad \partial_t (\rho u) = \nu_t \otimes dt, $$ 
with $\{\mu_t\}_t,  \{\nu_t \}_t \subset \mathcal{M}_x$ satisfying \eqref{eq: disintegration}, and that for $\mathcal{L}^1$-a.e. $t \in (0,T)$ we have
\begin{equation} \label{eq: disintegrating momentum and continuity}
    \mu_t(\Sigma) = 0, \quad \nu_t(\Sigma) =0 \qquad \text{for any Lipschitz hypersurface } \Sigma \subset \Omega.
\end{equation}
Then, the following holds for a.e. $t$. Let $\Sigma \subset \Omega$ be a Lipschitz hypersurface oriented by a unit normal vector $n : \Sigma \to \mathbb{S}^{d-1}$ and assume that both $\rho_t, u_t$ have bilateral traces at $\Sigma$. Set $(u_t^\pm)_n = u_t^\pm \cdot n$. For $\mathcal{H}^{d-1}$-a.e. point at $\Sigma$, we have 
\begin{equation}
    ((u_t^+)_n - (u_t^-)_n) \left[ \rho_t^+ \rho_t^- \abs{u_t^+ - u_t^-}^2 - (p(\rho_t^+) - p(\rho_t^-))(\rho_t^+ - \rho_t^-) \right] = 0.   \label{eq: equivalent formula bis}
\end{equation} 
\end{proposition}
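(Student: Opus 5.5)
The plan is to reduce the statement, for a.e. time slice, to the stationary Rankine--Hugoniot conditions in space, obtained from the measure divergence theory of \cref{ss: MD fields and traces}, and then conclude by elementary algebra.

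\emph{Step 1: slicing the equations.} I would first show that for $\mathcal{L}^1$-a.e. $t\in(0,T)$ the spatial identities
\begin{equation}
\div(\rho_t u_t) = -\mu_t, \qquad \div\left(\rho_t u_t\otimes u_t + p(\rho_t)\,\textrm{Id}\right) = \rho_t f_t - \nu_t
\end{equation}
hold in $\mathcal{D}'(\Omega)$. To do this, test \eqref{compr Euler} with $\chi(t)\psi(x)$ and use the disintegrations $\partial_t\rho=\mu_t\otimes dt$ and $\partial_t(\rho u)=\nu_t\otimes dt$. Since $\chi$ is arbitrary, each identity tested against a fixed $\psi$ holds for a.e. $t$. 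Running $\psi$ over a countable family dense in $C^1_c(\Omega)$ then gives a single full-measure set of times on which all of the following hold simultaneously: both identities, $f_t\in L^1_x$, $(\rho_t,u_t)\in L^\infty_x$, and \eqref{eq: disintegrating momentum and continuity}. I expect this measure-theoretic bookkeeping to be the only delicate point, and it is routine.

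\emph{Step 2: Rankine--Hugoniot in space.} Fix such a $t$ and a Lipschitz hypersurface $\Sigma$ at which $\rho_t$ and $u_t$ have bilateral traces. The fields $\rho_t u_t$ and the rows of $\rho_t u_t\otimes u_t+p(\rho_t)\textrm{Id}$ belong to $\mathcal{MD}^\infty(\Omega)$. Their divergences do not charge $\Sigma$, because $\mu_t(\Sigma)=\nu_t(\Sigma)=0$ and the $L^1$ term $\rho_t f_t$ is absolutely continuous. By \cref{R: blow up of composition} these fields have bilateral traces, given by composition. Combining \cref{P:div and normal trace} with \cref{t: distributional trace vs lebesgue trace}, for $\mathcal{H}^{d-1}$-a.e. point of $\Sigma$ we obtain
\begin{equation}
\rho^+u^+_n=\rho^-u^-_n=:m,\qquad m\,u^+ + p(\rho^+)\,n = m\,u^- + p(\rho^-)\,n.
\end{equation}
Here we drop the index $t$.

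\emph{Step 3: algebra.} Projecting the momentum condition onto tangential directions gives $m(u^+-u^-)_\tau=0$. Projecting onto $n$ gives $m(u^+_n-u^-_n)=-(p(\rho^+)-p(\rho^-))$. We split into two cases.

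If $m\neq0$, the tangential jump vanishes, so $|u^+-u^-|^2=(u^+_n-u^-_n)^2$. In this case $\rho^\pm>0$ and $u^\pm_n=m/\rho^\pm$, so $u^+_n-u^-_n=m(\rho^--\rho^+)/(\rho^+\rho^-)$. It follows that
\begin{equation}
\rho^+\rho^-|u^+-u^-|^2=\frac{m^2(\rho^+-\rho^-)^2}{\rho^+\rho^-}=-m(u^+_n-u^-_n)(\rho^+-\rho^-)=(p(\rho^+)-p(\rho^-))(\rho^+-\rho^-).
\end{equation}
Hence the bracket in \eqref{eq: equivalent formula bis} vanishes.

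If $m=0$, the normal condition gives $p(\rho^+)=p(\rho^-)$, so the bracket reduces to $\rho^+\rho^-|u^+-u^-|^2$. If $\rho^+\rho^->0$, then $u^\pm_n=m/\rho^\pm=0$, so the prefactor $u^+_n-u^-_n$ vanishes. Otherwise $\rho^+\rho^-=0$ and the bracket vanishes.

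In every case \eqref{eq: equivalent formula bis} holds $\mathcal{H}^{d-1}$-a.e. on $\Sigma$.
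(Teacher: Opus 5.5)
Your proof is correct and follows essentially the same route as the paper. You slice the equations in time (your countable-test-function argument is exactly the uniqueness of the disintegration the paper invokes), obtain $\rho^+u^+_n=\rho^-u^-_n=m$ and $m(u^+-u^-)=-(p(\rho^+)-p(\rho^-))\,n$ from \cref{P:div and normal trace} and \cref{t: distributional trace vs lebesgue trace}, and conclude with the same case split on whether $m$ vanishes; your source term $\rho_t f_t$ is in fact the correct one (the paper writes $f_t$), and either way it is absolutely continuous and does not charge $\Sigma$.
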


\begin{proof}
We have 
\begin{align}
    \div(\rho_t u_t) \otimes dt  = \div(\rho u) & = - \mu_t \otimes dt, 
    \\ \div(\rho_t u_t \otimes u_t + p(\rho_t) \textrm{Id} ) \otimes dt = \div( \rho u \otimes u + p(\rho) \textrm{Id} ) & = - \nu_t \otimes dt + f_t \otimes dt.  
\end{align}
By Fubini's theorem, recall that $f_t \in L^1_x$ for a.e. $t$. Since the disintegration is uniquely determined for a.e. $t$, we find a set $\mathcal{N} \subset (0,T)$ such that $\mathcal{L}^1(\mathcal{N}) =0$ and for any $t\in \mathcal{N}^c $ we have 
\begin{equation} \label{eq: space div is L^1}
    \div(\rho_t u_t) = -\mu_t \qquad \text{ and } \qquad \div(\rho_t u_t \otimes u_t + p(\rho_t) \textrm{Id} ) = -\nu_t + f_t.
\end{equation}
From now on, we assume that $t \in \mathcal{N}^c$ and fix a Lipschitz hypersurface $\Sigma \subset \Omega$ oriented by a unit vector $n \in \mathbb{S}^{d-1}$ such that both $\rho_t, u_t$ possess bilateral traces at $\Sigma$. By \eqref{eq: disintegrating momentum and continuity}, \eqref{eq: space div is L^1} and using \cref{P:div and normal trace}, \cref{t: distributional trace vs lebesgue trace} to compute the traces, we have\footnote{We point out that under our assumptions, $\div (\rho_t, u_t), \div(u_t \otimes u_t + \rho_t \textrm{Id})$ are only distributions in space. The fact that they are represented by Radon measures follows by the uniqueness of the disintegration.} 
\begin{align}
    \rho_t^+ (u_t^+)_n & = \rho_t^- (u_t^-)_n
    \\ \rho_t^+ (u_t^+)_n \,  (u_t^+ - u_t^-) & = - (p (\rho_t^+) - p(\rho_t^-)) \, n \label{eq: jump only normal}
\end{align}
Assume that $\rho_t^+ (u_t^+)_n =0$. In this case, we have $p(\rho^+_t) - p(\rho^-_t) = 0$. If $\rho_t^+ = 0$ or $\rho_t^- = 0$, then \eqref{eq: equivalent formula bis} holds. Thus, assume that $\rho^+_t \neq 0$ and $\rho^-_t \neq 0$. In this case, we have $(u_t^+)_n = (u_t^-)_n = 0$, and therefore \eqref{eq: equivalent formula bis} is valid. Assume that $\rho^+_t (u^+_t)_n \neq 0$. By \eqref{eq: jump only normal} the jump of $u_t$ is only along the normal component, so we have
$$ \abs{u_t^+ - u_t^-}^2 = \abs{(u_t^+)_n - (u_t^-)_n}^2. $$
Then we compute
\begin{align}
    \rho_t^+ \rho_t^- & \abs{u_t^+ - u_t^-}^2 - (p(\rho_t^+) - p(\rho_t^-))(\rho_t^+ - \rho_t^-) 
    \\ & = \rho_t^+ \rho_t^- \left( (u_t^+)_n - (u_t^-)_n)^2 + \rho_t^+ (\rho_t^+ - \rho_t^-)(u_t^+)_n \, ((u_t^+)_n - (u_t^-)_n\right) = 0. 
\end{align}
\end{proof}

\begin{corollary} \label{C: equivalence of formulas}
Under the assumptions of \cref{T: main bounded var}, suppose in addition that $\partial_t \rho, \partial_t(\rho u) \in \mathcal{M}_{x,t}$ have a disintegration with the properties of \cref{P: equivalent formula}. Then, the dissipation $\D$ satisfies  
\begin{equation} \label{eq: formula dissipation simplified}
    \D = - [\Pi(\rho_t^+, \rho_t^-) \, \div u_t \llcorner \J_{u_t}] \otimes dt = [ ((u^+_t)_n - (u^-_t)_n) \Pi (\rho_t^+, \rho_t^-) \mathcal{H}^{d-1} \llcorner \J_{u_t} ] \otimes dt,   
\end{equation}
where $\rho_t^\pm, u_t^\pm$ are the traces of $\rho_t, u_t$ in $\J_{u_t}$ that is oriented by a unit normal vector $n: \J_{u_t} \to \mathbb{S}^{d-1}$, respectively, $(u_t^\pm)_n = u^\pm \cdot n $ and $\Pi$ is defined by \eqref{eq: function Pi}.  
\end{corollary}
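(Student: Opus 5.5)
The plan is to combine \cref{T: main bounded var} with \cref{P: equivalent formula}. By \cref{T: main bounded var},
$$\D = - \left[ (\Gamma \Psi + \Pi) \, ((u_t^+)_n - (u_t^-)_n) \, \mathcal{H}^{d-1}\llcorner \J_{u_t} \right] \otimes dt .$$
It therefore suffices to show that, for $\mathcal{L}^1$-a.e. $t$, the product $\Gamma(\rho_t^+,\rho_t^-,u_t^+,u_t^-)\,((u_t^+)_n-(u_t^-)_n)$ vanishes $\mathcal{H}^{d-1}$-a.e. on $\J_{u_t}$. Since $\Psi$ is a bounded continuous function of $(\rho_t^+,\rho_t^-)$ on $[c,\norm{\rho}_\infty]^2$, the term $\Gamma\Psi\,((u_t^+)_n-(u_t^-)_n)$ then drops out, and what remains is exactly $-\Pi\,\div u_t\llcorner\J_{u_t}$. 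The second equality in \eqref{eq: formula dissipation simplified} follows from $\div u_t\llcorner \J_{u_t} = ((u_t^+)_n-(u_t^-)_n)\,\mathcal{H}^{d-1}\llcorner\J_{u_t}$, which is obtained by taking the trace in \eqref{eq: symmetric gradient jump} of \cref{P: prop BD}. The sign convention in the stated display should be reconciled with \eqref{eq: formula dissipation explicit}.

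The key step is to apply \cref{P: equivalent formula} on the jump set, which is not a single Lipschitz hypersurface. I would fix $t$ outside a null set such that the following hold simultaneously:
\begin{itemize}
\item $u_t\in BD_x$ and $\rho_t\in BV_x$, with $\rho_t\ge c$;
\item the conclusion of \cref{P: equivalent formula} is valid at $t$;
\item the statement of \cref{T: main bounded var} holds at $t$.
\end{itemize}
Because $\J_{u_t}$ is $(d-1)$-countably rectifiable, I would cover it, up to an $\mathcal{H}^{d-1}$-null set, by countably many Lipschitz hypersurfaces $\Sigma_i$. On each $\Sigma_i$, \cref{P: prop BV} and \cref{P: prop BD} give that the triplets of $\rho_t$ and $u_t$ are defined $\mathcal{H}^{d-1}$-a.e., with normal equal to $n_{\Sigma_i}$. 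These are the bilateral traces required in \cref{P: equivalent formula}.

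One subtlety needs checking. The definition of bilateral traces demands $\mathcal{H}^{d-1}(\Sigma_i\cap\mathcal{B}^c)=0$, but for $BD$ fields this is only guaranteed on rectifiable sets, via item (3) of \cref{P: prop BD}. Since $\Sigma_i$ is rectifiable, this suffices.

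The traces in \cref{P: equivalent formula} and in \cref{T: main bounded var} agree $\mathcal{H}^{d-1}$-a.e. on $\Sigma_i\cap\J_{u_t}$, up to the simultaneous swap $n\to-n$, $\pm\to\mp$. Both $\Gamma\,((u^+)_n-(u^-)_n)$ and $\Pi\,((u^+)_n-(u^-)_n)$ are invariant under this swap: $\Gamma$ and $\Pi$ are symmetric in the $\pm$ arguments, and the normal jump is unchanged when both $n$ and the labels flip. Hence \eqref{eq: equivalent formula bis} gives $\Gamma\,((u_t^+)_n-(u_t^-)_n)=0$ $\mathcal{H}^{d-1}$-a.e. on $\Sigma_i\cap\J_{u_t}$, and taking the countable union gives the claim on all of $\J_{u_t}$.

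I expect the main obstacle to be measure-theoretic bookkeeping rather than any new estimate. The exceptional time set in \cref{P: equivalent formula} must be independent of the hypersurface, which the proposition provides, since the null set $\mathcal N$ is chosen before $\Sigma$. The covering $\{\Sigma_i\}$ may depend on $t$, which is harmless because the vanishing is pointwise in $t$. Finally, the resulting density must be jointly measurable so that it can be integrated in $t$ as in the proof of \cref{T: main bounded var}. I would simply reuse that proof's formula \eqref{eq: formula dissipation preliminary}, substituting the now-vanishing $\Gamma$ term for a.e. $t$ under the time integral, so that no new measurability issue arises.
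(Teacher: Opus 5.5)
Your proposal is correct and follows the same route as the paper. You fix $t$ outside the null set provided by \cref{P: equivalent formula}, use the countable rectifiability of $\J_{u_t}$ to obtain \eqref{eq: equivalent formula bis} at $\mathcal{H}^{d-1}$-a.e.\ point of $\J_{u_t}$, and drop the $\Gamma\Psi$ term from \eqref{eq: formula dissipation explicit}. The covering by Lipschitz hypersurfaces, the trace-existence check via \cref{P: prop BV} and \cref{P: prop BD}, and the orientation-invariance remark only make explicit what the paper's one-line proof leaves implicit.

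Your sign remark is also justified. Since $\div u_t \llcorner \J_{u_t} = ((u_t^+)_n - (u_t^-)_n)\,\mathcal{H}^{d-1}\llcorner \J_{u_t}$, the last expression in the displayed formula should carry a minus sign, consistently with \eqref{eq: formula dissipation explicit} and \cref{T: dissipation on hypersurfaces}.
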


\begin{proof}
Let $\mathcal{N} \subset (0,T)$ be the negligible set given by \cref{P: equivalent formula} and fix $t \in \mathcal{N}^c$. Since $\J_{u_t}$ is $\mathcal{H}^{d-1}$-countably rectifiable, we infer that \eqref{eq: equivalent formula bis} holds in $\mathcal{H}^{d-1}$-a.e. point at $\J_{u_t}$. Thus, \eqref{eq: formula dissipation explicit} reduces to \eqref{eq: formula dissipation simplified}.  
\end{proof}

\subsection{About the general statement}

We conclude the discussion on the compressible isentropic Euler system by combining \cref{T: dissipation on hypersurfaces} and \cref{T: main bounded var}.

\begin{proof} [Proof of \cref{T: full statement}]
Since $\rho= 0$ almost everywhere in the interior of $A$, then $\D$ vanishes as a distribution on the interior of $A$. Without loss of generality, we may assume that $\rho^+ =0$ at $\mathcal{H}^d$-a.e. at $\partial A$. By \eqref{eq: momentum traces}, we have  
$$ (p(\rho^+) - p(\rho^-)) \, n_x =0 \qquad \mathcal{H}^{d}-\text{a.e. at }\partial A, $$
where $n= (n_t, n_x) \in \R \times \R^d$ is the unit norm vector defining the orientation of $\partial A$. If $ p(\rho^-) = p(\rho^+) =0 $, we infer that $\Pi(\rho^+, \rho^-) =0$. On the other hand, if $n_x =0$, we have $ u^+\cdot n_x = u^-\cdot n_x = 0 $. Hence, by \eqref{T: dissipation on hypersurfaces}, we have that $\abs{\D} (\partial A) =0$. Thus, by \cref{P:div and normal trace}, we we infer that $\abs{\D} (\overline{A}) =0$. To compute $\D \llcorner (I_n \times B_n)$, it suffice to apply \cref{T: main bounded var}. 
\end{proof}

\section{The inhomogeneous incompressible case} \label{s: IIE}

In the final section of this note, we discuss the inhomogeneous incompressible setting
\begin{equation} \label{eq: IIE}
    \begin{cases} \tag{IIE}
        \partial_t(\rho u) + \div(\rho u \otimes u) + \nabla p = \rho f, 
        \\ \partial_t \rho + \div(\rho u) = 0, 
        \\ \div(u) =0. 
    \end{cases}
\end{equation}
As in the isentropic compressible case, we consider weak solutions $u \in L^3_{t,x} , \rho \in L^\infty_{t,x}$. For the self-induced pressure, we require $p\in L^{\sfrac{3}{2}}_{t,x}$, and for the external force $f \in L^1_{t,x}$. The defect distribution is defined by 
\begin{equation} \label{eq: exact balance IIE}
    - \D : = \partial_t \left( \frac{\rho \abs{u}^2}{2} \right) + \div \left( u \left( \frac{\rho \abs{u}^2}{2} + p \right) \right) - \rho f \cdot u, 
\end{equation}
the latter being zero for smooth solutions. As in the compressible setting, the problem of finding a minimal set of assumptions to show that $\D \equiv 0$ can be approached in multiple ways. It is certainly possible to trade spatial regularity of the density for spatial smoothness of the velocity, as well as time regularity with no-vacuum on the density. See, for example, the analysis in \cite{FGSW17}. As explained in the introduction, here the main interest is to understand whether the formation of shocks may produce a non-trivial dissipation. To this end, we work in a functional setting where shocks naturally appear. This concludes the analysis started by the authors in \cite{InvVi24}. See \cref{s: tools} for the notation. To begin, we recall the following proposition, which is the analogue of \cref{T: dissipation on hypersurfaces}. 

\begin{proposition} [\cite{InvVi24}*{Theorem 1.3}] \label{P: no diss IIE}
Let $(\rho, u, p) \in L^\infty_{x,t}$ be a weak solution to \eqref{eq: IIE} with external force $f \in L^1_{t,x}$ and dissipation $\D \in \mathcal{M}_{t,x}$. Let $\Sigma \subset (0,T) \times \Omega$ be a $\mathcal{H}^d$-countably rectifiable set and assume that $(\rho, u, p)$ have bilateral traces at $\Sigma$. Then $\D \llcorner \Sigma \equiv 0$. 
\end{proposition}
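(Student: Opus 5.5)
We would follow the proof of \cref{T: dissipation on hypersurfaces} verbatim, replacing the compressible fluxes by those of \eqref{eq: IIE}. First, decompose $\Sigma = N \cup \bigcup_i \Sigma_i$ with $\mathcal{H}^d(N)=0$ and $\Sigma_i$ Lipschitz hypersurfaces in space-time. By \eqref{eq: exact balance IIE}, $-\D = \div_{t,x}\mathcal{V}$ with
$$\mathcal{V} := \left( \frac{\rho\abs{u}^2}{2}, \, u\left(\frac{\rho\abs{u}^2}{2} + p\right)\right) \in L^\infty_{t,x},$$
and $\D$ is a Radon measure, so $\mathcal{V}\in\mathcal{MD}^\infty$. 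Then \cref{P:div and normal trace} gives $\D(N)=0$ and $-\D\llcorner\Sigma_i = [\tr_n(\mathcal{V},\Sigma_{i,+})-\tr_n(\mathcal{V},\Sigma_{i,-})]\,\mathcal{H}^d\llcorner\Sigma_i$. Since $(\rho,u,p)$ have bilateral traces, \cref{R: blow up of composition} shows that $\mathcal{V}$ has traces $\mathcal{V}^\pm$ obtained by plugging in $\rho^\pm,u^\pm,p^\pm$. By \cref{t: distributional trace vs lebesgue trace}, the jump is therefore $(\mathcal{V}^+-\mathcal{V}^-)\cdot n$ with $n=(n_t,n_x)$.

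Next, we read the three equations of \eqref{eq: IIE} as vanishing space-time divergences, up to the $L^1$ term $\rho f$, which does not charge $\mathcal{H}^d$-dimensional sets. The fields involved are $(\rho u,\rho u\otimes u+p\,\mathrm{Id})$, $(\rho,\rho u)$ and $(0,u)$, all bounded. The same trace argument then yields, $\mathcal{H}^d$-a.e. on $\Sigma_i$,
$$ m u^+ + p^+ n_x = m u^- + p^- n_x,\qquad m:=\rho^\pm(n_t+u^\pm_n),\qquad u^+_n = u^-_n =: u_n. $$
The equality of the two expressions defining $m$ is the mass balance, and the last identity comes from $\div u=0$.

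Finally, we compute
$$(\mathcal{V}^+-\mathcal{V}^-)\cdot n = m\frac{\abs{u^+}^2-\abs{u^-}^2}{2} + (p^+-p^-)u_n.$$
Taking the scalar product of the momentum jump relation with $(u^++u^-)/2$ gives
$$m\frac{\abs{u^+}^2-\abs{u^-}^2}{2} + (p^+-p^-)\frac{u^+_n+u^-_n}{2}=0.$$
Since $u^+_n=u^-_n=u_n$, the second term equals $(p^+-p^-)u_n$, so the jump of $\mathcal{V}\cdot n$ vanishes. Hence $\D\llcorner\Sigma_i=0$ for each $i$, and $\D\llcorner\Sigma=0$.

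The only delicate point is making sure that every field involved is in $\mathcal{MD}^\infty$, so that \cref{P:div and normal trace} and \cref{t: distributional trace vs lebesgue trace} apply. The momentum field has divergence $\rho f\in L^1$, the mass and incompressibility fields are divergence free, and $\mathcal{V}$ has divergence $-\D$ plus an $L^1$ term, which is a measure by hypothesis. The continuity of the nonlinear maps that produce the traces of $\mathcal{V}$ is covered by \cref{R: blow up of composition}. Compared with the compressible case, the cancellation is cleaner: the incompressibility condition forces $u^+_n=u^-_n$, which is exactly what kills the analogue of $\Pi$.
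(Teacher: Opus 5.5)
Your proposal is correct and follows essentially the same route as the paper. The paper quotes this result from \cite{InvVi24} without proof, and it is obtained exactly as in the proof of \cref{T: dissipation on hypersurfaces}. One reads the momentum, mass and incompressibility equations as jump conditions for $\mathcal{MD}^\infty$ fields with strong traces, and then takes the scalar product of the momentum jump with $(u^++u^-)/2$; here $\div u=0$ gives $u^+_n=u^-_n$, which removes the leftover pressure term.
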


Then, we assume no vacuum and establish the analogue of \cref{t: main DR}.

\begin{proposition}  \label{T: main IIE}
Let $(\rho, u, p)$ be a weak solution to \eqref{eq: IIE} with external force $f$. Assume that $u\in L^3_{t,x}, p\in L^{\sfrac{3}{2}}_{t,x}, \rho \in L^\infty_{t,x}$ satisfies \eqref{no vacuum}, and $f \in L^{\sfrac{3}{2}}_{t,x}$\footnote{Any assumption that makes $p\cdot u, f \cdot u\in L^1_{t,x}$ would suffice.}. The distribution $\D$ defined by \eqref{eq: exact balance IIE} satisfies 
\begin{equation} \label{eq: defect distribution IIE}
    \D = \lim_{\ell \to 0} \left( J_1^\ell - J_3^\ell \right) , 
\end{equation}
where the limit is taken in the sense of distributions, $J_1^\ell$ is defined by \eqref{eq: J_1} and $J_3^\ell$ satisfies 
\begin{equation} \label{eq: J_3}
    J_3^\ell := \overline{p}  \div(\widetilde{u}). 
\end{equation} 
\end{proposition}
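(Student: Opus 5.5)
The plan is to reproduce Step 1 of the proof of \cref{t: main DR} essentially verbatim, replacing $p(\rho)$ by the self-induced pressure $p$, and then to pass to the limit. Step 2 (the internal energy) is not needed, since no $\P$ appears in \eqref{eq: exact balance IIE}.

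First, mollify the momentum and mass equations in \eqref{eq: IIE} in space:
\begin{equation}
\partial_t \overline{\rho u} + \div(\overline{\rho u\otimes u}) + \nabla \overline{p} = \overline{\rho f}, \qquad \partial_t\overline{\rho} + \div(\overline{\rho u}) = 0.
\end{equation}
By \eqref{no vacuum} we have $\overline{\rho}\geq c$, so $\widetilde{u} = \overline{\rho u}/\overline{\rho}$ is well defined and smooth in space. The time derivatives $\partial_t\overline{\rho}$ and $\partial_t\overline{\rho u}$ are given by spatial derivatives of mollified $L^1_{\rm loc}$ quantities. Hence $\overline{\rho}$, $\overline{\rho u}$ and $\widetilde{u}$ are $W^{1,1}_{\rm loc}$ in time with smooth spatial dependence, which justifies the chain-rule computations below.

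Next, take the scalar product of the mollified momentum equation with $\widetilde{u}$. Using \eqref{eq: en bal 2} and \eqref{eq: en bal 3}, which only use the mass equation and the identity $\overline{\rho u} = \overline{\rho}\,\widetilde{u}$ and therefore hold unchanged, we arrive at \eqref{eq: en bal 5} with $\overline{p}$ now the mollified pressure:
\begin{equation}
\partial_t\Big(\frac{\overline{\rho}|\widetilde{u}|^2}{2}\Big) + \div\Big(\widetilde{u}\Big(\frac{\overline{\rho}|\widetilde{u}|^2}{2}+\overline{p}\Big) + \overline{\rho u}\,(\widetilde{u\otimes u}-\widetilde{u}\otimes\widetilde{u})\Big) = \overline{p}\,\div(\widetilde{u}) + \overline{\rho f}\cdot\widetilde{u} - J_1^\ell .
\end{equation}
Here $\overline{p}\,\div(\widetilde{u}) = J_3^\ell$. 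The key observation is that $\div\widetilde{u}\neq 0$ in general even though $\div u = 0$, because Favre mollification does not commute with the divergence constraint. So this term cannot be discarded; it is kept as an error term.

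Finally, let $\ell\to0$ in the sense of distributions. The goal is to show that the left-hand side converges to the left-hand side of \eqref{eq: exact balance IIE} (without the force term), and that $\overline{\rho f}\cdot\widetilde{u}\to\rho f\cdot u$ in $L^1$. Rearranging then gives $-\D = \lim(-J_1^\ell + J_3^\ell)$, i.e. $\D=\lim(J_1^\ell-J_3^\ell)$.

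The convergence requires strong convergence of each term.
\begin{itemize}
\item $\widetilde{u}\to u$ in $L^3_{\rm loc}$: write $\widetilde{u}-u = (\overline{\rho u}-\overline{\rho}\,u)/\overline{\rho}$. Since $\rho u\in L^3$, $\overline{\rho u}\to\rho u$ and $\overline{\rho}\,u\to\rho u$ in $L^3$, using $\overline{\rho}\to\rho$ a.e. with $\|\overline{\rho}\|_\infty$ bounded and dominated convergence. The lower bound $\overline{\rho}\geq c$ controls the denominator.
\item Similarly $\widetilde{u\otimes u}\to u\otimes u$ in $L^{3/2}_{\rm loc}$, so the commutator $\widetilde{u\otimes u}-\widetilde{u}\otimes\widetilde{u}\to0$ in $L^{3/2}$. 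Multiplied by $\overline{\rho u}\to\rho u$ in $L^3$, the commutator flux tends to $0$ in $L^1$.
\item $\overline{\rho}|\widetilde{u}|^2\widetilde{u}\to\rho|u|^2u$ in $L^1$, and $\widetilde{u}\,\overline{p}\to up$ in $L^1$ by $L^3\times L^{3/2}$.
\item For the force term, $\overline{\rho f}\to\rho f$ in $L^{3/2}$.
\end{itemize}

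The main (mild) obstacle is this last step: checking that the Favre averages converge strongly in the right Lebesgue spaces with only $u\in L^3$ and no regularity. The no-vacuum bound is exactly what makes the denominators harmless.
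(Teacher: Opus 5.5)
Your proposal is correct and matches the paper's proof, which simply repeats Step 1 of the proof of \cref{t: main DR} with $p(\rho)$ replaced by the self-induced pressure $p$, keeps $\overline{p}\,\div(\widetilde{u}) = J_3^\ell$ as an error term, and lets $\ell \to 0$. Your explicit verification that $\widetilde{u}\to u$ in $L^3_{\rm loc}$, $\widetilde{u\otimes u}\to u\otimes u$ in $L^{3/2}_{\rm loc}$, and that the commutator flux vanishes in $L^1$ using only \eqref{no vacuum} supplies the limiting step that the paper states without detail.
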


\begin{proof}
The proof follows line by line the argument outlined in \textsc{Step 1} of the proof of \cref{t: main DR}. 
\end{proof}

The problem of finding an approximation formula for the defect distribution $\D$ has also been addressed in \cite{InvVi24}*{Theorem 1.2}. In that case, the approach relies on testing the equation with $\widetilde{u}$, as done by Duchon and Robert \cite{DR00} in the homogeneous incompressible setting. Here, instead, we test the mollified equation with $\widetilde{u}$. This is similar to the Constantin--E--Titi approach \cite{CET94} in the homogeneous incompressible setting. This gives rise to different error terms, compared to \cite{InvVi24}, which can be explicitly computed by the blow up analysis in the bounded variation/deformation setting. The following is the analogue of \cref{T: main bounded var}.

\begin{theorem}
Let $(\rho, u, p)$ be a weak solution to \eqref{eq: IIE}. Assume that $u \in L^\infty_{t,x} \cap L^1_t BD_x, \rho \in L^\infty_{t,x} \cap L^1_t BV_x$ satisfies \eqref{no vacuum} and $p, f \in L^1_{t,x}$. We have 
$$ -\D = \lim_{\ell \to 0} \overline{p} \div(\widetilde{u}) $$
in the sense of distributions. Suppose in addition that $p \in L^\infty_{t,x}$ and that 
\begin{equation} \label{eq: condition on the pressure} 
    \left( \abs{\nabla \rho_t} + \abs{E u_t} \right) (\mathcal{B}_{p_t}^c) = 0 \qquad \text{ for a.e. $t$}, 
\end{equation}
where $\mathcal{B}_{p_t}$ is defined in \cref{ss:jumps}. Then $\D \equiv 0$. 
\end{theorem}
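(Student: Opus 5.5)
We follow the approach used for \cref{T: main bounded var}. The argument has three steps: reduce to the pressure commutator via \cref{T: main IIE}, show it vanishes for almost every time slice by blow up analysis, then integrate in time.

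\textbf{Step 1: the term $J_1^\ell$.} We start from \cref{T: main IIE}, which gives $\D=\lim_\ell (J_1^\ell - J_3^\ell)$. For a.e. $t$ we apply \cref{P: J^1} to the pair $(\rho_t,u_t)$. Its limit is a multiple of $(u^+_n-u^-_n)\,\mathcal H^{d-1}\llcorner\J_{u_t}$. Since $\div u_t=0$, \cref{P: prop BD} gives
$$\div(u_t)\llcorner\J_{u_t}=(u^+_n-u^-_n)\,\mathcal H^{d-1}\llcorner\J_{u_t}=0,$$
so $u^+_n=u^-_n$ $\mathcal H^{d-1}$-a.e. on the jump set, and $\lim_\ell J_1^\ell(t)=0$. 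The quantities $\langle J_1^\ell(t),\phi_t\rangle$ are bounded in $L^1_t$ by $\norm{Eu_t}+\norm{\nabla\rho_t}$ times $L^\infty$ norms, with an $L^1_t$ dominating function. Dominated convergence in time then yields $J_1^\ell\to0$ in distributions, and hence $-\D=\lim_\ell \overline p\,\div(\widetilde u)$. Only $L^1$ integrability of $p$ enters here, because $J_3^\ell$ is just carried along in the limit identity. One should check that \cref{T: main IIE} still applies with $p,f\in L^1$; since $u\in L^\infty$, the products $pu$ and $fu$ are integrable, so the footnote to that proposition covers this.

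\textbf{Step 2: the pressure commutator for fixed $t$.} Now assume $p\in L^\infty$ and \eqref{eq: condition on the pressure}. Write, as in \cref{P: J^2},
$$\overline p\,\div\widetilde u=\frac{\overline p}{\overline\rho}\div(\overline{\rho u})-\frac{\overline p}{\overline\rho^{2}}\,\overline{\rho u}\cdot\nabla\overline\rho.$$
Both measures involved, $\div(\rho_t u_t)$ and $\nabla\rho_t$, are absolutely continuous with respect to $\abs{\nabla\rho_t}+\abs{Eu_t}$ by \cref{L: product rule BV BD}. By \eqref{eq: condition on the pressure}, \cref{P: prop BV}, \cref{P: prop BD} and \cref{R: negligible set of BD BV}, the vector $Z=(\rho u,\rho,p)$ has blow ups $\abs{\nabla\rho_t}+\abs{Eu_t}$-a.e. \cref{P: mollification of function and measure} therefore applies and gives the limit
$$\int_0^1\frac{sp^++(1-s)p^-}{s\rho^++(1-s)\rho^-}\,ds\;\div(\rho u)\;-\;\int_0^1\frac{(sp^++(1-s)p^-)(s\rho^+u^+_n+(1-s)\rho^-u^-_n)}{(s\rho^++(1-s)\rho^-)^2}\,ds\;(\rho^+-\rho^-)\,\mathcal H^{d-1}.$$
By \eqref{eq: chain rule div BV BD}, $\div(\rho u)=\tfrac{u^++u^-}{2}\cdot\nabla\rho$ is concentrated on $\J_{\rho}$ with density $(\rho^+-\rho^-)u_n$, where $u_n:=u^+_n=u^-_n$ by Step 1. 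The diffuse parts vanish because there $\rho^+=\rho^-$, which makes the two terms cancel. On the jump set we substitute $u^\pm_n=u_n$. The factor $\frac{s\rho^+u_n+(1-s)\rho^-u_n}{s\rho^++(1-s)\rho^-}$ reduces to $u_n$, and the two integrands coincide. Hence the limit is $0$ for a.e. $t$.

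\textbf{Step 3: integration in time.} Time integration is by dominated convergence, as in the proof of \cref{T: main bounded var}, using $\abs{\langle J_3^\ell(t),\phi_t\rangle}\lesssim \norm{p}_\infty(\norm{Eu_t}+\norm{\nabla\rho_t})$. This gives $\D\equiv0$.

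The main obstacle is Step 2. Unlike $p(\rho)$ in the compressible case, $p$ is not a function of $(\rho,u)$, so its blow up behaviour is not automatic. This is why \eqref{eq: condition on the pressure} is needed: it ensures $p_t$ has one-sided limits with a normal compatible with $n$ on the support of the relevant measures. One must also verify that the normals agree $\mathcal H^{d-1}$-a.e., using rectifiability of the jump sets as in \cref{ss:jumps}.
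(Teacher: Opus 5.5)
Your proposal is correct and follows the paper's proof: $J_1^\ell\to0$ via \cref{P: J^1} and $\div u_t=0$; the splitting $J_3^\ell=J_{3,1}^\ell-J_{3,2}^\ell$ with \cref{P: mollification of function and measure} applied to $Z=(\rho u,\rho,p)$ as in \cref{P: J_3}; and dominated convergence in time, where your $L^1_t$ domination is the right bound rather than the $L^\infty_t$ one stated in the paper. The one point to tighten is that $\div(\rho_t u_t)=\frac{u^++u^-}{2}\cdot\nabla\rho_t$ is not concentrated on $\J_{\rho_t}$, and the second limit also has a diffuse part; the two diffuse parts do cancel, but this needs $u^+=u^-$ (not just $\rho^+=\rho^-$) almost everywhere with respect to the diffuse part of $\nabla\rho_t$, which holds because $\J_{u_t}$ is countably $(d-1)$-rectifiable.
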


Some remarks are in order. 

\begin{itemize}
    \item[(i)] The condition \eqref{eq: condition on the pressure} on the pressure can be avoided in the homogeneous incompressible case, as shown in \cites{Inv25, DRINV23}. In that case, the equation is tested with $\overline{u}$, the term $J_1^\ell$ defined by \eqref{eq: J_1} coincides with that given by the Constantin--E--Titi approximation \cite{CET94} and $\overline{p} \div(\overline{u}) =0 $ pointwise for any $\ell >0$. However, in the inhomogeneous setting, an additional condition on the pressure such as \eqref{eq: condition on the pressure} seems to be necessary, since we consider $\widetilde{u}$ as a test function, and it is expected that the term $\overline{p} \div(\widetilde{u})$ vanishes only in the limit. We note that \eqref{eq: condition on the pressure} is satisfied if we assume that $p_t \in BV_x$ for a.e. $t$. In the stationary case, we observe that 
$$\nabla p = -\div(\rho u \otimes u) + \rho f, $$
that is a measure if we assume both $\rho, u \in BV_x$ (this follows by a modification of \cref{L: product rule BV BD}, see also \cite{AFP00}*{Chapter 3}).  We also point out that the $L^\infty$ bound on the pressure does not seem to follow by those on $\rho, u$ and, hence, we need to assume it. 
\item[(ii)] The reader might compare our \cref{T: main IIE} with \cite{InvVi24}*{Theorem 1.2}. In that case, an upper bound on the dissipation is given in terms of the singular part of the gradient of the density and the symmetric gradient of the velocity. Here, at the price of the assumption \eqref{eq: condition on the pressure}, we show that the dissipation vanishes for solutions in the same functional setting. 
\end{itemize}

The proof of \cref{T: main IIE} follows that of \cref{T: main bounded var}, with the help of \cref{P: J^1} and the following proposition. 

\begin{proposition} \label{P: J_3}
Let $\rho \in L^\infty \cap BV$ and let $u \in L^\infty \cap BD$ be divergence free. Assume that $p \in L^\infty$ and $(\abs{\nabla \rho} + \abs{E u}) (\mathcal{B}_p^c) = 0$. Fix a kernel in $\mathcal{K}_{\rm rad}$ and define $J_3^\ell$ by \eqref{eq: J_3}. Then, we have
\begin{equation}
    \lim_{\ell \to 0} J_3^\ell = 0
\end{equation}
weakly in the sense of measures. 
\end{proposition}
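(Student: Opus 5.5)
Following the proofs of \cref{P: J^1} and \cref{P: J^2}, I expand the Favre mollification as
\begin{equation}
    J_3^\ell = \overline{p}\,\div\Big(\frac{\overline{\rho u}}{\overline{\rho}}\Big) = \frac{\overline{p}}{\overline{\rho}}\,\div(\overline{\rho u}) - \frac{\overline{p}\,\overline{\rho u}}{(\overline{\rho})^2}\cdot \nabla\overline{\rho} =: J_{3,1}^\ell - J_{3,2}^\ell .
\end{equation}
Each term has the form $F(g_\ell)\,\lambda_\ell$, where $g = (p,\rho u,\rho)$ and $\lambda$ is a Radon measure. For $J_{3,1}^\ell$ take $F_{3,1}(d,b,c)=d/c$ and $\lambda = \div(\rho u)$. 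By \cref{L: product rule BV BD} with $\div u=0$, $\lambda = \tfrac{u^++u^-}{2}\cdot\nabla\rho$. For $J_{3,2}^\ell$ take $F_{3,2}(d,b,c) = d\,b/c^2$ and $\lambda=\nabla\rho$. Thanks to \eqref{no vacuum}, both $F$'s are continuous on the essential range of $g$.

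To apply \cref{P: mollification of function and measure}, I need \eqref{eq: blow up vector} for $g$ at $|\lambda|$-a.e. point. There are three pieces.
\begin{itemize}
    \item For $\rho$, this follows from \cref{P: prop BV}, since $\mathcal{B}_\rho^c$ is $\mathcal{H}^{d-1}$-null and $|\nabla\rho|\ll\mathcal{H}^{d-1}$.
    \item For $u$, use \cref{R: negligible set of BD BV}, which gives $|\nabla\rho|(\mathcal{B}_u^c)=0$.
    \item For $p$, this is exactly the hypothesis $(|\nabla\rho|+|Eu|)(\mathcal{B}_p^c)=0$.
\end{itemize}
Since $|\div(\rho u)|\ll|\nabla\rho|$, these combine via \cref{R: blow up of composition}. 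One subtlety: $\mathcal{B}_g$ requires a common normal for the three components. On $\J_\rho$ this holds $\mathcal{H}^{d-1}$-a.e. because $\J_\rho$ is rectifiable and the normals of the jump sets agree there. At Lebesgue points of $\rho$ both measures vanish, so those points play no role.

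Passing to the limit, I obtain $|\nabla\rho|$-a.e. on $\J_\rho$, with the notation of the paper,
\begin{equation}
    \lim_{\ell\to0}(J_{3,1}^\ell - J_{3,2}^\ell) = \int_0^1 \frac{s p^+ + (1-s)p^-}{s\rho^++(1-s)\rho^-}\Big[(\rho^+u^+_n-\rho^-u^-_n) - \frac{(s\rho^+u_n^++(1-s)\rho^-u_n^-)(\rho^+-\rho^-)}{s\rho^++(1-s)\rho^-}\Big]ds\,\mathcal{H}^{d-1}\llcorner\J_\rho .
\end{equation}
Here I have used $\div(\rho u)\llcorner\J_\rho = (\rho^+u^+_n-\rho^-u^-_n)\mathcal{H}^{d-1}$ and $\nabla\rho\llcorner\J_\rho=(\rho^+-\rho^-)n\,\mathcal{H}^{d-1}$.

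As in \cref{P: J^2}, the bracket equals $\rho^+\rho^-(u^+_n-u^-_n)/(s\rho^++(1-s)\rho^-)$. The key point is that $\div u=0$ forces $u^+_n=u^-_n$ $\mathcal{H}^{d-1}$-a.e. on any rectifiable set. This is the trace of \eqref{eq: symmetric gradient jump}, since $\div u\llcorner\J_\rho = (u^+-u^-)\cdot n\,\mathcal{H}^{d-1}$. Hence the integrand vanishes identically and the limit is zero.

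The main obstacle is the bookkeeping of negligible sets: the blow-up of the three-component field $g$ must hold with a single normal at $|\nabla\rho|$-a.e. point. This is where hypothesis \eqref{eq: condition on the pressure} enters, because $p$ carries no structure of its own. The $|Eu|$ part of that hypothesis is used only to ensure the normals of $p$ and of $(\rho,u)$ coincide on the relevant rectifiable set. It is not strictly needed here, since only $|\nabla\rho|$ and $\div(\rho u)$ appear as the measures $\lambda$.
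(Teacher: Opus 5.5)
Your route is the paper's: the same splitting $J_3^\ell=J_{3,1}^\ell-J_{3,2}^\ell$, the same $F_{3,1},F_{3,2}$, and the same measures $\div(\rho u)=\frac{u^++u^-}{2}\cdot\nabla\rho$ and $\nabla\rho$. Your computation on $\J_\rho$ (the bracket identity from \cref{P: J^2}, then $u_n^+=u_n^-$) is correct. You are also right that only $|\nabla\rho|(\mathcal{B}_p^c)=0$ is actually used.

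\textbf{The gap.} The step that fails is ``At Lebesgue points of $\rho$ both measures vanish''. This is false. The absolutely continuous and Cantor parts of $\nabla\rho$ are concentrated on $\mathcal{S}_\rho^c$; for smooth $\rho$, all of $\nabla\rho$ lives there. Since $\div(\rho u)=\frac{u^++u^-}{2}\cdot\nabla\rho$, it carries the corresponding diffuse part. So your displayed limit, written against $\mathcal{H}^{d-1}\llcorner\J_\rho$, simply drops $\nabla\rho\llcorner\mathcal{S}_\rho^c$. The jump representations of $\nabla\rho\llcorner\J_\rho$ and $\div(\rho u)\llcorner\J_\rho$ say nothing about that part. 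It does not vanish term by term: where $\rho,u,p$ are all approximately continuous, both $\lim J_{3,1}^\ell$ and $\lim J_{3,2}^\ell$ equal $\frac{p}{\rho}\,u\cdot\nabla\rho$. So a cancellation must be shown, and it must also cover points of $\mathcal{S}_\rho^c$ where $u$ jumps. You used the same false claim to dispose of the common-normal issue off $\J_\rho$.

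\textbf{The fix.} Keep the limit at the level of measures before restricting, as the paper does. With $\theta(s):=\frac{s\rho^+}{s\rho^++(1-s)\rho^-}$ one has $\frac{u^++u^-}{2}-\frac{s\rho^+u^++(1-s)\rho^-u^-}{s\rho^++(1-s)\rho^-}=\big(\tfrac12-\theta(s)\big)(u^+-u^-)$, hence
\[
\lim_{\ell\to0}\big(J_{3,1}^\ell-J_{3,2}^\ell\big)=\left(\int_0^1\frac{sp^++(1-s)p^-}{s\rho^++(1-s)\rho^-}\Big(\frac12-\theta(s)\Big)\,ds\right)(u^+-u^-)\cdot\nabla\rho .
\]
The paper writes the factor as $\frac12$ instead of $\frac12-\theta(s)$; this is harmless, since only proportionality to $u^+-u^-$ matters. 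This density vanishes on $\mathcal{S}_u^c$, and $|\nabla\rho|(\mathcal{B}_u^c)=0$. So the measure lives on the rectifiable set $\J_u$. There \cref{P: prop BV} gives $\nabla\rho\llcorner\J_u=(\rho^+-\rho^-)\,n\,\mathcal{H}^{d-1}\llcorner\J_u$, and $\div u=0$ gives $(u^+-u^-)\cdot n=0$.

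If you prefer to keep your $\J_\rho$ computation, add the missing step. First, $|\nabla\rho|(\mathcal{S}_\rho\setminus\J_\rho)=0$. Second, $|\nabla\rho|(\mathcal{S}_\rho^c\cap\J_u)=0$, by \cref{P: prop BV}(3) applied to $\Sigma=\J_u$. Hence at $|\nabla\rho|$-a.e. point of $\mathcal{S}_\rho^c$ both $\rho$ and $u$ are approximately continuous. There the integrand is $\frac{sp^++(1-s)p^-}{\rho}\big(\frac12-s\big)\cdot 0=0$, and no common normal is needed.
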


\begin{proof}
We observe that 
\begin{equation}
    J_3^\ell = \frac{\overline{p}}{\overline{\rho}} \div(\overline{\rho u}) - \frac{\overline{p} }{(\overline{\rho})^2} \overline{\rho u} \cdot \nabla \overline{\rho} =: J_{3,1}^\ell - J_{3,2}^\ell. 
\end{equation}
We study both terms separately. 

\textsc{\underline{The term $J_{3,1}^\ell$}:} We define 
$$F_{3,1}(c,d) := \frac{d}{c} \qquad (c,d) \in \R \times \R. $$
We set $W:= (\rho, p(\rho))$. We combine \cref{L: product rule BV BD}, \cref{P: prop BV}, \cref{P: prop BD}, \cref{R: blow up of composition} as in the proof of \cref{P: J^1}. Then, by \cref{P: mollification of function and measure}, we infer that 
\begin{equation}
    \lim_{\ell \to 0} J_{3,1}^\ell = \left( \int_0^1 F_{3,1} (s W^+ + (1-s) W^-)\, ds \right) \div(\rho u) =  \left( \int_0^1 F_{3,1} (s W^+ + (1-s) W^-)\, ds \right) \frac{u^+ + u^-}{2} \cdot \nabla \rho, 
\end{equation} 
where the limit is taken weakly in the sense of measures. 

\textsc{\underline{The term $J_{3,2}^\ell$}:} We define 
$$F_{3,2} (b,c,d) := \frac{d b}{c^2} = F_{3,1} (c,d) \, \frac{b}{c} \qquad (b,c,d) \in \R^d \times \R \times \R. $$
We combine \cref{R: negligible set of BD BV}, \cref{P: prop BV}, \cref{P: prop BD}, \cref{R: blow up of composition} as in the proof of \cref{P: J^1}. We set $Z:= (\rho u, \rho, p(\rho)) = (\rho u, W)$ and, by \eqref{eq: condition on the pressure} and \cref{P: mollification of function and measure}, we find
\begin{align}
    \lim_{\ell \to 0} J_{3,2}^\ell = \left( \int_0^1 F_{3,1}(s W^+ + (1-s) W^-) \, \frac{s \rho^+ u^+ + (1-s) \rho^- u^-}{ s \rho^+ + (1-s) \rho^-} \, ds \right) \cdot \nabla \rho, 
\end{align}
where the limit is taken in the sense of measures. 

\textsc{\underline{Summing all terms}:} In summary, we have 
\begin{align}
    \lim_{\ell \to 0} J_{3,1}^\ell - J_{3,2}^\ell & = \left( \int_0^1 F_{3,1}(sW^+ + (1-s) W^-) \left( \frac{u^+ + u^-}{2} - \frac{s \rho^+ u^+ + (1-s) \rho^- u^-}{ s \rho^+ + (1-s) \rho^-} \right) \, ds  \right) \cdot \nabla \rho
    \\ & = \left( \frac{u^+ - u^- }{2} \int_0^1 F_{3,1} (sW^+ + (1-s) W^-) \, ds \right) \cdot \nabla \rho.  
\end{align}
Since the latter vanishes on $\S_u^c$, we restrict it to $\J_u$. Thus, by \cref{P: prop BV} we have 
\begin{equation}
    \lim_{\ell \to 0} J_{3,1}^\ell - J_{3,2}^\ell = \left( \frac{(u^+ - u^-)\cdot n }{2} \int_0^1 F_{3,1} (sW^+ + (1-s) W^-) \, ds \right) (\rho^+ -\rho^-) \, \mathcal{H}^{d-1} \llcorner \J_u = 0,
\end{equation}
since $u$ is divergence free.
\end{proof}

Finally, we discuss the proof of \cref{T: main IIE}.

\begin{proof} [Proof of \cref{T: main IIE}]
The proof of the first statement follows by \cref{P: J^1} and the divergence free condition. In particular, as showed in the proof of \cref{T: main bounded var}, it turns out that $J_1^\ell \rightharpoonup 0$ weakly as measures. Assume now that \eqref{eq: condition on the pressure} is satisfied. Fix a test function $\phi \in C_c(\Omega \times (0,T))$. By \cref{P: J_3} and since the sequence $t\mapsto \langle  J_3^\ell(t) , \phi_t\rangle $ is bounded in $L^\infty_t$, we use the dominated convergence theorem with respect to the time variable to conclude  
\begin{equation}
    \lim_{\ell \to 0} \langle J_{3}^\ell, \phi \rangle = \int_0^T \lim_{\ell \to 0} \langle J_3^\ell(t) , \phi_t \rangle \, dt = 0. 
\end{equation}
\end{proof}

As in the isentropic compressible setting, we propose a general statement that collects \cref{P: no diss IIE} and \cref{T: main IIE}. See \cref{s: tools} for the notation. 

\begin{corollary} \label{T: full statement IIE} 
Let $(\rho, u, p) \in L^\infty_{t,x}$ be a weak solution to \eqref{eq: IIE} with external force $f \in L^1_{t,x}$ and dissipation $\D$ as in \eqref{eq: defect distribution IIE} given by a space-time Radon measure. Suppose that $\Omega \times (0,T) = A \cup B$, where $A, B$ satisfy the following conditions:  
\begin{enumerate}
    \item $A$ is a closed set and $\rho (t,x) =0$ $\mathcal{L}^{d+1}$-almost everywhere in the interior of $A$; 
    \item $\partial A$ is $\mathcal{H}^d$-countably rectifiable and $\rho,u, p$ possess bilateral space-time traces at $\partial A$; 
    \item there are intervals $I_n \subset (0,T)$ and open sets $B_n \subset \Omega$ such that 
    $$B = \bigcup_{n} I_n \times B_n, $$
    $\rho \in L^1(I_n, BV(B_n)), u \in L^1(I_n, BD(B_n))$ and there are constants $c_n>0$ such that 
    \begin{equation} \label{eq: local no vacuum}
        \rho(t,x) \geq c_n >0 \qquad \text{a.e. at } I_n \times B_n; 
    \end{equation} 
    \item for any $n \in \N$ we have
\begin{equation} \label{eq: condition on the pressure bis} 
    \left( \abs{\nabla \rho_t} + \abs{E u_t} \right) (\mathcal{B}_{p_t}^c \cap B_n) = 0 \qquad \text{ for a.e. $t \in I_n$}. 
\end{equation}
\end{enumerate} 
Then $\D \equiv 0$.  
\end{corollary}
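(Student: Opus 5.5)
The plan is to mirror the proof of \cref{T: full statement}, splitting the space-time domain into the vacuum region $A$ and the non-vacuum region $B$. Since $\D$ is assumed to be a Radon measure, it suffices to show $\abs{\D}(\overline{A}) = 0$ and $\D \llcorner (I_n \times B_n) = 0$ for every $n$. Because $\Omega \times (0,T) = A \cup B$ and $B = \bigcup_n I_n\times B_n$ is a countable union, countable subadditivity then gives $\D \equiv 0$.

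\textbf{Step 1 (vacuum region).} On the interior of $A$ we have $\rho = 0$ a.e., so $\rho u$, $\rho u\otimes u$ and $\rho\abs{u}^2$ all vanish there. The momentum equation then reduces to $\nabla p = 0$ in $\mathrm{int}\, A$, so $p$ depends only on time on each connected component. Hence $u\, p$ has vanishing spatial divergence, since $\div u = 0$, and $\D = 0$ as a distribution on $\mathrm{int}\, A$. On $\partial A$, which is $\mathcal{H}^d$-countably rectifiable and at which $(\rho,u,p)$ have bilateral traces by assumption (2), we invoke \cref{P: no diss IIE} directly to get $\D\llcorner \partial A = 0$. Combining the two gives $\abs{\D}(\overline{A}) = 0$.

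\textbf{Step 2 (non-vacuum region).} On each cylinder $I_n\times B_n$ the hypotheses of the bounded variation theorem for \eqref{eq: IIE} hold locally: we have $\rho\in L^1(I_n;BV(B_n))$, $u\in L^1(I_n;BD(B_n))$, the local no vacuum condition \eqref{eq: local no vacuum}, $p\in L^\infty$, and the pressure condition \eqref{eq: condition on the pressure bis}. Test $\D$ against $\phi\in C_c(I_n\times B_n)$. For $\ell$ smaller than the distance from $\mathrm{supp}\,\phi$ to the boundary of the cylinder, the mollifications $\overline{\rho}$ and $\widetilde{u}$ on $\mathrm{supp}\,\phi$ only involve values in $I_n\times B_n$, where $\rho\ge c_n$. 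Therefore \cref{T: main IIE} applies locally and gives $\langle \D,\phi\rangle=\lim_\ell\langle J_1^\ell-J_3^\ell,\phi\rangle$. \cref{P: J^1} combined with $\div u_t=0$ gives $J_1^\ell\rightharpoonup 0$ for a.e.\ $t$, because its limit is proportional to $u_n^+-u_n^-=0$. \cref{P: J_3} gives $J_3^\ell\rightharpoonup 0$. Dominated convergence in time, using the $L^\infty_t$ bounds on the pairings, then yields $\langle\D,\phi\rangle=0$.

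\textbf{Main obstacle.} The main point to check is the localization in Step 2. The propositions are stated on $\R^d$, so one should either multiply by cutoffs or note that the blow-up arguments of \cref{P: mollification of function and measure} and \cref{P: prop BV}, \cref{P: prop BD} are purely local. Either way, restricting to $B_n$ is harmless once $\ell$ is small relative to the support of $\phi$. A secondary subtlety is that in Step 1 the distributional vanishing of $\D$ on $\mathrm{int}\,A$ has to be combined with the measure $\D\llcorner\partial A$. Since $\D$ is Radon, $\abs{\D}(\overline{A})=\abs{\D}(\mathrm{int}\,A)+\abs{\D}(\partial A)=0$.
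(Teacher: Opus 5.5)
Your proposal is correct and follows the paper's proof. It kills $\D$ on $\mathrm{int}\,A$ because $\rho=0$ forces $\nabla p=0$ and hence $\div(up)=0$, on $\partial A$ by \cref{P: no diss IIE}, and on each cylinder $I_n\times B_n$ by the bounded variation argument (\cref{P: J^1} with $\div u_t=0$, \cref{P: J_3}, and dominated convergence in time); your localization of the spatial mollification is a detail the paper leaves implicit.

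Two harmless precisions. First, $\nabla p=0$ on $\mathrm{int}\,A$ only makes $p$ a function of $t$ locally on cylinders, not on whole connected components, since their time slices may be disconnected; local is all you need for $\div(up)=0$. Second, the time pairings $\langle J_1^\ell(t)-J_3^\ell(t),\phi_t\rangle$ are not bounded in $L^\infty_t$ but are dominated, uniformly in $\ell$, by $C(\norm{\nabla\rho_t}_{\mathcal{M}}+\norm{Eu_t}_{\mathcal{M}})\in L^1_t$, which still suffices for dominated convergence.
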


\begin{proof}
Since $\rho$ vanishes almost everywhere in the interior of $A$, by the momentum equation, we have that $\nabla p =0$ as a distribution in the interior of $A$. Since $\div(u) = 0$, it follows that $\div(u p ) =0$ as a distribution in the interior of $A$. By \cref{P: no diss IIE} and \cref{P:div and normal trace}, we have $\abs{\D} (\overline{A}) =0$. On the other hand, by \cref{T: main IIE}, we have that $\D \llcorner (I_n \times B_n) \equiv 0$ for any $n \in \N$. Therefore we conclude that $\D\llcorner B =0$. 
\end{proof}

\bibliographystyle{plain} 
\bibliography{biblio}

\end{document}